\documentclass[9pt, a4paper, centertags, openright]{amsart}


\usepackage{amsmath, amsfonts, amsthm, 
amssymb, amsxtra, bbm, enumerate, stmaryrd}
\usepackage[dvipsnames]{xcolor}
\usepackage[retainorgcmds]{IEEEtrantools}
\usepackage[mathscr]{euscript}
\usepackage{graphicx}
\usepackage[all, cmtip]{xy}
\xyoption{curve}

\usepackage{cite}

\usepackage{wasysym}
\usepackage[english]{babel}
\usepackage[utf8]{inputenc}
\usepackage{paralist}
\usepackage{textcomp}
\usepackage[colorinlistoftodos]{todonotes}
\usepackage{caption}

\usepackage{fancyhdr}
\usepackage[bottom]{footmisc}
\usepackage[hyphens]{url}

\definecolor{Myblue}{rgb}{0,0,0.6}
\usepackage[a4paper, colorlinks=true, 
citecolor=green, linkcolor=red, 
urlcolor=cyan, filecolor=black,
pdfpagemode=None, 
]{hyperref}

\usepackage{breakurl}

\calclayout
\makeatletter
\g@addto@macro{\thm@space@setup}{\thm@headfont{\bf}}
\makeatletter
\makeatother

\bibliographystyle{plain}


\theoremstyle{theorem}
\newtheorem{lem}{Lemma}[section]
\newtheorem{prop}[lem]{Proposition}
\newtheorem{cor}[lem]{Corollary}
\newtheorem{thm}[lem]{Theorem}

\newtheorem*{Thm1}{Theorem~1}

\newtheorem*{Cor}{Corollary}

\theoremstyle{remark}
\newtheorem{rem}[lem]{Remark}

\theoremstyle{definition}
\newtheorem{exa}[lem]{Example}
\newtheorem{exas}[lem]{Examples}
\newtheorem{defn}[lem]{Definition}
\newtheorem{quest}[lem]{Question}
\newtheorem{nn}[lem]{}

\numberwithin{equation}{section}


\newcommand{\Der}{\operatorname{Der}\nolimits}
\newcommand{\Inn}{\operatorname{Inn}\nolimits}

\newcommand{\add}{\operatorname{add}\nolimits}

\newcommand{\id}{\operatorname{id}\nolimits}
\newcommand{\Id}{\operatorname{Id}\nolimits}

\newcommand{\Mod}{\operatorname{\mathsf{Mod}}\nolimits}

\newcommand{\End}{\operatorname{End}\nolimits}
\newcommand{\Hom}{\operatorname{Hom}\nolimits}

\renewcommand{\Im}{\operatorname{Im}\nolimits}
\newcommand{\Ker}{\operatorname{Ker}\nolimits}

\newcommand{\Coker}{\operatorname{Coker}\nolimits}

\newcommand{\Ext}{\operatorname{Ext}\nolimits}

\newcommand{\Tor}{\operatorname{Tor}\nolimits}

\newcommand{\ev}{\mathrm{ev}}

\newcommand{\HH}{\operatorname{HH}\nolimits}
\newcommand{\OH}{\operatorname{H}\nolimits}

\newcommand{\Ob}{\operatorname{Ob}\nolimits}

\newcommand{\Out}{\operatorname{Out}\nolimits}

\newcommand{\abs}[1]{\ensuremath{\left\vert#1\right\vert}}


\def\A{{\mathsf A}}

\def\D{{\mathsf D}}

\def\E{{\mathsf E}}

\def\Z{{\mathbb Z}}

\def\op{\mathrm{op}}




\def\C{{\mathsf C}}



\def\bbm1{{\mathbbm 1}}



\newdir{ >}{{}*!/-5pt/@{>}}
\newdir{> }{{}*!/+5pt/@{>}}
\newdir{ <}{{}*!/-5pt/@{<}}
\newdir{>>> }{{}*!/+10pt/@{>}}

\entrymodifiers={+!!<0pt,\fontdimen22\textfont2>}


\begin{document}

\title[Hochschild cohomology and the $M$-relative center]{Exact sequences, Hochschild cohomology, and the Lie module structure over the $M$-relative center}
\author{Reiner Hermann}
\address{Reiner Hermann\\ Institutt for matematiske fag\\
NTNU\\ 7491 Trondheim\\ Norway}
\email{reiner.hermann@math.ntnu.no}
\thanks{}
\keywords{Exact categories; Gerstenhaber algebras; Hochschild cohomology; Loop bracket; Monoidal categories; Monoidal functors; Relative center.}
\subjclass[2010]{Primary 16E40; Secondary 14F35, 18D10, 18E10, 18G15.}
\begin{abstract}  

In this article, we present actions by central elements on Hochschild cohomology groups with arbitrary bimodule coefficients, as well as an interpretation of these actions in terms of exact sequences. Since our construction utilises the monoidal structure that the category of bimodules possesses, we will further recognise that these actions are compatible with monoidal functors and thus, as a consequence, are invariant under Morita equivalences. By specialising the bimodule coefficients to the underlying algebra itself, our efforts in particular yield a description of the degree-$(n,0)$-part of the Lie bracket in Hochschild cohomology, and thereby close a gap in earlier work by S.\,Schwede.
\end{abstract}

\maketitle
\tableofcontents


\section{Introduction}

Let $A$ be an associative and unital algebra over a commutative ring $K$. The theory of Hochschild cohomology has, since it was introduced by G.\,Hochschild in 1945 (see \cite{Ho45}), developed into various areas of mathematics, e.g., algebraic geometry, as well as representation and deformation theory of associative algebras. One of its most intriguing features is probably the fact, that the \textit{Hochschild cohomology module} $\HH^\ast(A,A)$ with coefficients in $A$ carries the structure of a graded $K$-algebra. In 1963, M.\,Gerstenhaber showed that $\HH^\ast(A,A)$ actually is \textit{graded commutative} in that homogeneous elements of degrees $m, n \geqslant 0$ commute up to the sign $(-1)^{mn}$. In the very same article, Gerstenhaber provided a graded Lie bracket
$$
\{-,-\} : \HH^m(A,A) \times \HH^n(A,A) \longrightarrow \HH^{m+n-1}(A,A) \quad \text{(for $m , n \geqslant 0$)}
$$
of degree $-1$, being compatible with the multiplicative structure of $\HH^\ast(A,A)$ as it acts through graded derivations. For $A$ being projective over $K$, H.\,Cartan and S.\,Eilenberg interpreted $\HH^\ast(A,A)$ as the $\Ext$-algebra of $A$ in the category of bimodules over $A$ -- and thus pointed out that the \textit{Hochschild cohomology algebra} of $A$ may be comprehended in terms of arbitrary projective resolutions of the bimodule $A$, or, equivalently, arbitrary self-extensions of $A$, establishing understanding of its multiplicative structure through the Yoneda product (see \cite{CaEi56}, \cite{Yo54}). Gerstenhaber's Lie bracket on the other hand proved itself resistant of such a description for several decades.
\medskip

The shortcoming that Gerstenhaber's construction could only be grasped in terms of the Hochschild cocomplex, and by no means if starting with an arbitrary projective bimodule resolution of $A$, asked for significant improvement, as Gerstenhaber himself did explicitly, together with S.\,D.\,Schack (see \cite[p.\,256]{GeSch86}):
\begin{quote}
\it{What intrinsic reason is there for the existence of the graded Lie structure on $\HH^\ast(A,A)$? The cup product $($but not its graded com\-mutativity$)$ can be understood from the Yoneda theory; something is wanting to make the Lie structure equally transparent.}
\end{quote}
An answer to their prayers was given by S.\,Schwede in \cite{Sch98}, where he desribed Gerstenhaber's bracket in terms of bimodule self-extensions of $A$, utilizing the monoidal structure that the category of $A$-bimodules possesses. Schwede's construction has been generalised to exact monoidal categories in \cite{He14b}, in order to aquire a better understanding of functorial properties of the Lie bracket in Hochschild cohomology. However, Schwede's interpretation misses a significant piece of the picture, as it does not describe the brackets
$$
\{-,-\} : \HH^n(A,A) \times \HH^0(A,A) \longrightarrow \HH^{n-1}(A,A) \quad \text{(for $n \geqslant 0$)}.
$$
These maps are, in general, non-trivial (see for instance \cite{LeZh13}) and thus cannot be ignored. The primary goal of the present article is to close this gap.
\medskip

Let $A$ be a unital and associative algebra over a commutative ring $K$. We will write $\otimes$ for $\otimes_K$, and denote by $A^\ev = A \otimes A^\op$ the \textit{enveloping algebra} of $A$. Fix an $A^\ev$-module $M$. We define the \textit{$M$-relative center} of $A$ to be $Z_M(A) = \{ z \in Z(A) \mid zm = mz \text{ for all $m \in M$}\}$. Clearly, $Z_A(A) = Z(A) = \HH^0(A,A)$. By imitating Gerstenhaber's construction on the Hochschild cocomplex, we obtain a map
$$
[-,-]_M : \HH^n(A,M) \times Z_M(A) \longrightarrow \HH^{n-1}(A,M) \quad \text{(for $n \geqslant 0$)},
$$
which recovers the Lie bracket $\{-,-\}$ in degrees $(n,0)$ for $M = A$. We are going to provide an interpretation of this map by $n$-extensions of $A$ by $M$. Let us give some idea of the construction, whose details will be presented in Sections \ref{sec:funda}--\ref{sec:actmonoidal}.
\medskip

For a ring $R$ and $R$-modules $U$ and $V$, we denote by $\mathcal Ext^n_R(U,V)$ the \textit{category of $n$-extensions of $U$ by $V$}. Due to V.\,Retakh, see \cite{Re86}, there is an isomorphism
$$
\pi_i\mathcal Ext^n_R(U,V) \longrightarrow \Ext^{n-i}_R(U,V)
$$
for every $n \geqslant 0$, and $0 \leqslant i \leqslant n$. Therefore, in particular, the fundamental group $\pi_1(\mathcal Ext^n_R(U,V),S)$ of $\mathcal Ext^n_R(U,V)$ based at some extension $S$ is isomorphic to $\Ext^{n-1}_R(U,V)$. By an interpretation of D.\,Quillen, cf.~\cite{Qu72}, $\pi_1(\mathcal Ext^n_R(U,V),S)$ can be thought of as equivalence classes of \textit{loops} at $S$ in the category $\mathcal Ext^n_R(U,V)$.
\medskip

Given a map $f \in \Hom_{A^\ev}(A,A)$, we obtain two homomorphisms $f_\lambda^M, f_\varrho^M : M \rightarrow M$ by taking the unit isomorphisms $\lambda_M : A \otimes_A M \rightarrow M$ and $\varrho_M : M \otimes_A A \rightarrow M$ into account:
$$
f_\lambda^M = \lambda_M \circ (f \otimes_A M) \circ \lambda^{-1}_M, \quad f_\varrho^M = \varrho_M \circ (M \otimes_A f) \circ \varrho^{-1}_M.
$$
Essentially, the maps $f_\lambda^M$ and $f_\varrho^M$ are given by left and right multiplication with $f(1)$; they will, in general, not agree.

Since the homomorphisms $f_\lambda^M$ and $f_\varrho^M$ are natural in $M$, they give rise to endomorphisms $F_\lambda, F_\varrho : S \rightarrow S$ of \textit{complexes} (but a priori not of extensions) for every extension $S \in \mathcal Ext^n_{A^\ev}(A,M)$. Assuming $f_\lambda^M = f_\varrho^M$, we will construct a loop
\begin{equation}\label{eq:loop}\tag{$\star$}
S \# f \longrightarrow f \# S \longleftarrow S \# f
\end{equation}
in $\mathcal Ext^{n}_{A^\ev}(A,M)$, were $\#$ denotes the Yoneda product on extensions. Since the groups $\pi_1(\mathcal Ext^{n}_{A^\ev}(A,M), S \# f)$ and $\Ext^{n-1}_{A^\ev}(A,M)$ identify by Retakh's theorem, the loop (\ref{eq:loop}) defines the equivalence class of an $(n-1)$-extension of $A$ by $M$. If we denote by $Z_M(\Mod(A^\ev))$ the set of all morphisms $f \in \Hom_{A^\ev}(A,A)$ whose \textit{$M$-relative defect} $\Delta^M(f) = f^M_\lambda - f^M_\varrho$ vanishes, we arrive at a map
$$
\langle -,-\rangle_M : \Ext^n_{A^\ev}(A,M) \times Z_M(\Mod(A^\ev)) \longrightarrow \Ext^{n-1}_{A^\ev}(A,M) \quad \text{(for $n \geqslant 0$)}.
$$
Observe that $Z_M(\Mod(A^\ev)) \cong Z_M(A)$ by sending $f$ to $f(1)$. We can now state our main result.

\begin{Thm1}[= Thm.\,\ref{thm:mainthm}]
The following diagram commutes for $n = 0,1$.
$$
\xymatrix@C=30pt{
\HH^n(A,M) \times Z_M(A) \ar[r]^-{[-,-]_M} \ar[d]_-{\cong} & \HH^{n-1}(A,M) \ar[d]^-{\cong}\\
\Ext^n_{A^\ev}(A,M) \times Z_{M}(\Mod(A^\ev)) \ar[r]^-{\langle -, -\rangle_M} & \Ext^{n-1}_{A^\ev}(A,M)
}
$$
It also commutes for $n > 1$ provided that $A$ is projective as a $K$-module.
\end{Thm1}

By taking our construction under deeper analysis in the more general context of exact monoidal categories, the main theorem leads to the following interplay of the maps $[-,-]_M$ with braidings on the monoidal category $(\Mod(Z(A)^\ev), \otimes_{Z(A)}, Z(A))$.

\begin{Cor}[$\subseteq$ Cor.\,\ref{cor:funccompat} and Cor.\,\ref{cor:vanish}]
Let $A$ be projective as a $K$-module. Consider the following statements.
\begin{enumerate}[\rm(1)]
\item\label{intcor:1} $\HH^0(Z(A),M) = M$ for all $M \in \Mod(A^\ev)$.
\item\label{intcor:2} $Z(A) = Z_{M}(A)$ for all $M \in \Mod(A^\ev)$.
\item\label{intcor:3} $Z(A) = Z_{A \otimes A}(A)$.
\item\label{intcor:4} $(\Mod(Z(A)^\ev), \otimes_{Z(A)}, Z(A))$ is braided monoidal.
\item\label{intcor:5} $[-,-]_M$ vanishes for all $M \in \Mod(A^\ev)$.
\end{enumerate}
Then one has the implications
$$
(\ref{intcor:1}) \ \Longleftrightarrow \ (\ref{intcor:2}) \ \Longleftrightarrow \ (\ref{intcor:3}) \ \Longleftrightarrow \ (\ref{intcor:4}) \ \Longrightarrow \ (\ref{intcor:5}) \,.
$$
Moreover, the map $[-,-]_M$ is invariant under Morita equivalences.
\end{Cor}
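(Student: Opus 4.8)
The plan is to treat the four equivalences, the implication to $(\ref{intcor:5})$, and the Morita statement in turn, with the braiding characterisation as the conceptual heart. The equivalence $(\ref{intcor:1}) \Longleftrightarrow (\ref{intcor:2})$ should be a matter of unwinding definitions: restricting an $A^\ev$-module $M$ along $Z(A)^\ev \to A^\ev$ turns it into a $Z(A)$-bimodule, and both $\HH^0(Z(A),M) = M$ and $Z(A) = Z_M(A)$ say precisely that the left and right $Z(A)$-actions on $M$ agree, for every $M$. For $(\ref{intcor:2}) \Longleftrightarrow (\ref{intcor:3})$ I would exploit that $A \otimes A$ is the free $A^\ev$-module of rank one: the direction $(\ref{intcor:2}) \Rightarrow (\ref{intcor:3})$ is the special case $M = A \otimes A$, while for the converse, given any $M$ and $m \in M$, the $A^\ev$-linear evaluation $\phi_m \colon A \otimes A \to M$, $x \otimes y \mapsto xmy$, sends $z \otimes 1 \mapsto zm$ and $1 \otimes z \mapsto mz$; hence $z \otimes 1 = 1 \otimes z$ for all $z \in Z(A)$ forces $zm = mz$, i.e.\ $z \in Z_M(A)$.

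The substantial equivalence is $(\ref{intcor:3}) \Longleftrightarrow (\ref{intcor:4})$. I would first record the purely monoidal fact that in any braided monoidal category the two actions of $\End(\mathbf 1)$ on an object, obtained from the left and right unit constraints, coincide; this follows from naturality of the braiding in the unit slot together with the coherence identity $c_{X,\mathbf 1} = \ell_X^{-1} r_X$. Applied to $(\Mod(Z(A)^\ev), \otimes_{Z(A)}, Z(A))$, where $\End(\mathbf 1) = Z(A)$ and the two actions are left and right multiplication, this shows that braidedness forces every $Z(A)$-bimodule to be symmetric. Conversely, if every $Z(A)$-bimodule is symmetric then the category coincides with $\Mod(Z(A))$ for the commutative ring $Z(A)$, which is symmetric monoidal under $\otimes_{Z(A)}$; so $(\ref{intcor:4})$ is equivalent to all $Z(A)$-bimodules being symmetric, and testing on the free bimodule this amounts to $z \otimes 1 = 1 \otimes z$ holding in $Z(A) \otimes Z(A)$. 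It remains to identify this with $(\ref{intcor:3})$, the same identity in $A \otimes A$: applying the inclusion $Z(A) \otimes Z(A) \to A \otimes A$ gives $(\ref{intcor:4}) \Rightarrow (\ref{intcor:3})$ for free, whereas $(\ref{intcor:3}) \Rightarrow (\ref{intcor:4})$ needs that inclusion to be injective. This last comparison, transporting the symmetry of the distinguished $A^\ev$-module $A \otimes A$ down to the free $Z(A)$-bimodule, is the step I expect to be most delicate, and is exactly where the projectivity hypothesis on $A$ over $K$ is genuinely used (it is automatic when $K$ is a field, where $Z(A) \hookrightarrow A$ splits).

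For $(\ref{intcor:4}) \Rightarrow (\ref{intcor:5})$ I would feed the now-established $(\ref{intcor:2})$ into the extension-theoretic picture. If $Z(A) = Z_M(A)$ for every $M$, then for each $f \in Z_M(\Mod(A^\ev))$ the two endomorphisms $F_\lambda, F_\varrho$ of any extension $S \in \mathcal Ext^n_{A^\ev}(A,M)$, built from left and right multiplication by $f(1)$, agree termwise, because left and right multiplication by the central element $f(1)$ coincide on every module occurring in $S$. Consequently the two morphisms $S \# f \to f \# S$ comprising the loop $(\ref{eq:loop})$ coincide, the loop is constant, and $\langle -,-\rangle_M = 0$; by Theorem~\ref{thm:mainthm} (here projectivity of $A$ over $K$ licences the identification in all degrees) this gives $[-,-]_M = 0$. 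I would emphasise that only the implication holds: the loop can be nullhomotopic for reasons unrelated to symmetry, so $(\ref{intcor:5})$ need not return us to $(\ref{intcor:4})$.

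Finally, for Morita invariance I would argue functorially. A Morita equivalence $A \sim B$ is induced by a bimodule progenerator and yields a monoidal equivalence $(\Mod(A^\ev), \otimes_A, A) \simeq (\Mod(B^\ev), \otimes_B, B)$ carrying $A$ to $B$ and restricting to an isomorphism $Z(A) \cong Z(B)$. Since $\langle -,-\rangle_M$ is assembled entirely from the exact monoidal data, namely the Yoneda product $\#$, the unit constraints producing $F_\lambda$ and $F_\varrho$, and Retakh's identification of $\pi_1$ with $\Ext$, it is preserved by monoidal equivalences; this is precisely the compatibility with monoidal functors established in Corollary~\ref{cor:funccompat}. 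Transporting along the equivalence and invoking Theorem~\ref{thm:mainthm} once more then shows that $[-,-]_M$ is intertwined with its counterpart over $B$ under the induced correspondence of coefficients, hence is invariant under Morita equivalence.
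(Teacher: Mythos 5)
Most of your outline tracks the paper's own route: $(1)\Leftrightarrow(2)$ is Lemma \ref{lem:MrelcentHH}, your evaluation map $\phi_m$ for $(3)\Rightarrow(2)$ is exactly the argument of Lemma \ref{lem:Mrelcent} (where one writes $zm=(z\otimes 1)m=(1\otimes z)m=mz$), the implication $(4)\Rightarrow(5)$ is Lemma \ref{lem:EnC} together with Theorem \ref{thm:mainthm} (with $K$-projectivity of $A$ entering precisely where you say, namely to identify $\HH^n$ with $\Ext^n_{A^\ev}$ in degrees $n>1$), and the Morita statement is Example \ref{exa:monoidalfunc} combined with Proposition \ref{prop:functcomp}. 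Your handling of $(4)$ is the one genuinely different ingredient: deducing from naturality of the braiding and $\varrho_X\circ\gamma_{\mathbbm{1},X}=\lambda_X$ that the two $\End(\mathbbm{1})$-actions coincide, hence that every $Z(A)$-bimodule is symmetric, hence that $z\otimes 1=1\otimes z$ in $Z(A)\otimes_K Z(A)$. This is a clean, self-contained substitute for the paper's appeal to \cite[Prop.\,3.3]{AgCaMi12}, and it lands on the same pivot as conditions (4)--(5) of Lemma \ref{lem:Mrelcent} (Silver's criterion that $K\to Z(A)$ be a ring epimorphism, via Lemma \ref{lem:omegagen}).

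There is, however, a genuine gap at $(3)\Rightarrow(4)$. You reduce it to the injectivity of $Z(A)\otimes_K Z(A)\to A\otimes_K A$ and assert that this is ``exactly where the projectivity hypothesis on $A$ over $K$ is genuinely used.'' That assertion is false: projectivity of $A$ makes $A$ flat, hence $Z(A)\otimes_K A\to A\otimes_K A$ is injective, but the other factor requires tensoring the inclusion $Z(A)\hookrightarrow A$ with $Z(A)$ itself, and $Z(A)$ --- merely a $K$-submodule of a projective module --- need not be flat over $K$. (Over a field everything is flat, which is why your parenthetical works there; but that is a property of $K$, not a consequence of projectivity of $A$.) Note that the paper establishes the equivalences $(1)\Leftrightarrow\cdots\Leftrightarrow(4)$ in Lemmas \ref{lem:MrelcentHH} and \ref{lem:Mrelcent} with \emph{no} projectivity hypothesis whatsoever; projectivity is consumed only by $(4)\Rightarrow(5)$ and by the Morita claim. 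So you cannot discharge this step by the hypothesis of the corollary: you either need an argument producing the relation $z\otimes 1=1\otimes z$ directly inside $Z(A)\otimes_K Z(A)$ from condition $(3)$ without passing through the (generally non-injective) comparison map --- which is how the paper's chain through conditions (4) and (5) of Lemma \ref{lem:Mrelcent} and \cite[Prop.\,1.1]{Si67} is meant to be read --- or the implication remains unproved in your write-up. Everything else in the proposal is sound.
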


Furthermore, in the commutative case, we acquire a description of the kernel of a given derivation $A \rightarrow A$ in terms of some relative center.

\begin{Cor}[see Cor.\,\ref{cor:Outdervanish} and Rem.\,\ref{rem:kerder}]
Let $D : A \rightarrow M$ be a $K$-linear derivation for some $A^\ev$-module $M$. Then $\Ker(D) \cap Z_M(A) = \Ker(D\mathord{\upharpoonright}_{Z_M(A)}) = Z_{E_D}(A)$, where $E_D$ denotes the $A^\ev$-module
$$
E_D = \frac{(A \otimes A) \oplus M}{\{(a \otimes b - ab \otimes 1, D(a)b) \mid a,b \in A\}} \, .
$$
\end{Cor}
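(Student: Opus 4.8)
The first equality is purely formal: by definition one has $\Ker(D\mathord{\upharpoonright}_{Z_M(A)}) = \{z \in Z_M(A) \mid D(z) = 0\} = \Ker(D) \cap Z_M(A)$, so only the identification with $Z_{E_D}(A)$ carries content. The plan for the latter is to obtain a workable description of the bimodule $E_D$ and then to read off its $M$-relative center by a direct calculation.

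First I would record that $E_D$ is the middle term of a short exact sequence $0 \to M \to E_D \xrightarrow{\pi} A \to 0$ of $A^\ev$-modules representing the class of $D$. Indeed, the elements $a \otimes b - ab \otimes 1$ span the kernel $\Omega$ of the multiplication map $A \otimes A \to A$, and the prescribed relations identify this copy of $\Omega$ with $M$ along the bimodule map $\Omega \to M$ attached to the derivation $D$; in other words $E_D$ is nothing but the pushout of the canonical sequence $0 \to \Omega \to A \otimes A \to A \to 0$ along $D$. The key technical point is a normal-form statement: every class in $E_D$ has a \emph{unique} representative of the shape $[(a \otimes 1, m)]$ with $a \in A$ and $m \in M$. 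Existence follows by using the relations to rewrite $[(a \otimes b, m)]$ with the first tensor factor absorbed into a single copy of $1$; uniqueness follows because $a \otimes 1 \in \Omega$ forces $a = 0$. This simultaneously shows that $(a,m) \mapsto [(a \otimes 1, m)]$ is a $K$-linear isomorphism $A \oplus M \xrightarrow{\sim} E_D$ and exhibits a $K$-splitting of $\pi$. Crucially, no projectivity of $A$ over $K$ is needed here, since the splitting arises from the normal form rather than from lifting along $\pi$.

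In these coordinates the left action is the untwisted one, $x\cdot(a,m) = (xa,\, xm)$, while the right action acquires a twist coming from $D$, of the form $(a,m)\cdot y = (ay,\, my - aD(y))$ (the precise side and sign of the twist are dictated by the chosen conventions for the enveloping algebra and the derivation, but are immaterial to the outcome). I would verify these formulas by rewriting $[(a \otimes 1, m)]\cdot y = [(a\otimes y,\, my)]$ back into normal form, where the Leibniz rule for $D$ produces exactly the term $aD(y)$. With the module thus described, the computation of the relative center is immediate: for $z \in Z(A)$ one finds, using $za = az$, that $z\cdot(a,m) - (a,m)\cdot z = (0,\; zm - mz + aD(z))$. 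Demanding that this vanish for all $(a,m)$ splits, upon setting first $a = 0$ and then $a = 1$, into the two conditions $zm = mz$ for all $m \in M$ and $D(z) = 0$; that is, $z \in Z_M(A)$ and $z \in \Ker(D)$. Hence $Z_{E_D}(A) = \Ker(D) \cap Z_M(A)$, as claimed.

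The only genuine obstacle is the well-definedness and uniqueness of the normal form: one must check that the relations cut out an honest $A^\ev$-submodule and that its intersection with the $M$-summand is trivial, so that $\pi$ really has kernel $M$. This is precisely the place where the Leibniz identity for $D$ must be invoked, and where the conventions have to be pinned down carefully. Once the coordinates $E_D \cong A \oplus M$ and the two action formulas are established, the remainder is the short direct calculation above.
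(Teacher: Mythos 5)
Your argument is correct in substance, but it reaches the conclusion by a genuinely different and much more elementary route than the paper. The paper obtains this corollary as a by-product of its main theorem: Theorem \ref{thm:gersteqmonoidal} (the case $n=1$) identifies $[D,z]_M = D(z)$ with the class of the loop $\Omega_A(S_D,f_z)$ under the Retakh isomorphism, and Lemma \ref{lem:EnC} (using that $\mathcal Ext^1$ is a groupoid) shows this loop is nullhomotopic precisely when the two parallel morphisms $F_\lambda, F_\varrho \colon S_D\# f_z \to f_z\# S_D$ coincide, which by construction happens if and only if $z$ centralises the middle term $E_D$; together with the inclusion $Z_{E_D}(A) \subseteq Z_M(A)$ (restrict to the submodule $M$) this yields the statement. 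You instead put $E_D$ into the normal form $A \oplus M$ with a $D$-twisted right action and read off the relative centre by hand. Your defect computation $z\cdot(a,m)-(a,m)\cdot z = (0,\, zm-mz+aD(z))$ is exactly the explicit form of $\Delta^{E_D}(f_z)$ that the paper's machinery manipulates, so the two arguments agree at their core; what your route buys is complete independence from the apparatus of Sections \ref{sec:funda}--\ref{sec:actmonoidal}, and what it loses is the point the paper is making, namely that the vanishing of $D(z)$ is an instance of the extension-theoretic interpretation of $[-,-]_M$.

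One place where you hedge deserves to be made explicit, because the relation as printed in the statement is not literally the right one, and your normal-form argument detects this. Setting $b=1$ in $(a\otimes b - ab\otimes 1,\, D(a)b)$ gives $(0, D(a))$, so the subgroup being factored out meets the summand $M$ nontrivially whenever $D \neq 0$; this would destroy both the exactness of $0 \to M \to E_D \to A \to 0$ and the uniqueness of your normal form. The relation consistent with the pushout of the fundamental sequence (\ref{eq:fundexact}) along $\overline{D}$ is $(a\otimes b - ab\otimes 1,\, aD(b))$, since $a\otimes b - ab\otimes 1 = -a\,\mathrm{d}b$ and $\overline{D}(a\,\mathrm{d}b)=aD(b)$. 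Your action formula $(a,m)\cdot y = (ay,\, my - aD(y))$ and the ensuing computation are correct for this corrected presentation, so your proof stands once the misprint is repaired --- but the side of the twist is not actually immaterial: one of the two choices produces the wrong module.
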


\medskip

This article is organised as follows. In Section \ref{sec:exactmono}, we will built up the necessary foundations on exact and monoidal categories. Afterwards, in Section \ref{sec:funda}, we turn ourselves to fundamental groups of categories. We will present an explicit description of the Retakh isomorphism for fundamental groups of extension categories over exact categories, and explain in detail an even more explicit version for module categories going back to Schwede. Section \ref{sec:Mrel} will give a recap on the theory of Hochschild cohomology, and the definition of the Lie bracket associated with it. It will further introduce the $M$-relative center, along with its action on $\HH^\ast(A,M)$. In Section \ref{sec:actmonoidal} we will describe how to interpret this action in terms of extensions. Finally, in Section \ref{sec:proofs}, we will provide the proofs for our main results.


\section{Prerequisites on exact and monoidal categories}\label{sec:exactmono}

\begin{nn}
Let us recall the notions of exact and monoidal categories and structure preserving functors (exact and monoidal functors) between them. For further details on exact categories, we refer to \cite{Ke90} and \cite{Qu72}, whereas the textbooks \cite{AgMa10} and \cite{MaL98} provide background material on monoidal categories. In the following section, and in fact for the entire article, we fix a commutative ring $K$.
\end{nn}

\begin{nn}
An \textit{exact} $K$-category is a pair $(\C, i_\C)$ consisting of an additive $K$-category $\C$ and a full and faithful embedding $i_\C: \C \rightarrow \A_\C$ into an abelian $K$-category $\A_\C$, such that the essential image
$$
\Im(i_\C) = i_\C \C = \{ A \in \A_\C \mid i_\C(C) \cong A \text{ for some $C \in \C$}\}
$$
of $i_\C$ is an extension closed subcategory of $\A_\C$. Exact categories admit a sensible notion of exact sequences. Let $\C = (\C, i_\C)$ be an exact $K$-category. A sequence $0 \rightarrow C'' \rightarrow C \rightarrow C' \rightarrow 0$ is an \textit{admissible short exact sequence in $\C$}, if its image $0 \rightarrow i_\C(C'') \rightarrow i_\C(C) \rightarrow i_\C(C') \rightarrow 0$ under $i_\C$ is exact in $\A_\C$. Given such an admissible short exact sequence $0 \rightarrow C'' \rightarrow C \rightarrow C' \rightarrow 0$, the morphism $C'' \rightarrow C$ is called an \textit{admissible monomorphism}, whereas $C \rightarrow C'$ is an \textit{admissible epimorphism}. The class of admissible short exact sequences is closed under taking isomorphisms and direct sums (in the category of chain complexes over $\C$).
\end{nn}

\begin{nn}
Each exact $K$-category $(\C, i_\C)$ is closed under taking pushouts along admissible monomorphisms and pullbacks along admissible epimorphisms. Assume that $(\D, i_\D)$ is another exact $K$-category, and let $\mathfrak X: \C \rightarrow \D$ be an \textit{exact} $K$-linear functor, that is, it takes admissible short exact sequences in $\C$ to admissible short exact sequences in $\D$. Each such functor preserves pushouts along admissible monomorphisms and pullbacks along admissible epimorphisms.
\end{nn}

\begin{nn}
For an integer $n \geqslant 1$, a sequence
$$
S \quad \equiv \quad 0 \longrightarrow C'' \longrightarrow C_{n-1} \longrightarrow \cdots \longrightarrow C_0 \longrightarrow C' \longrightarrow 0
$$
of morphisms in an exact $K$-category $(\C, i_\C)$ is called an \textit{admissible $n$-extension} (\textit{of $C'$ by $C''$}) in case $i_\C S$ is exact in $\A_\C$, and $\Ker(i_\C(C_0 \rightarrow C'))$ and $\Ker(i_\C(C_k \rightarrow C_{k-1}))$ belong to $i_\C \C$ for all $k = 1, \dots, n-1$. 
\end{nn}

Let us turn to monoidal categories.

\begin{nn}
Recall that a \textit{monoidal category} is a 6-tuple $(\mathsf C, \otimes, \mathbbm 1, \alpha, \lambda, \varrho)$, where $\mathsf \C$ is a category, $\otimes: \C \times \C \rightarrow \C$ is a functor, $\mathbbm 1$ is an object in $\C$, and
\begin{gather*}
\alpha: - \otimes (- \otimes-) \longrightarrow (- \otimes -) \otimes - \ , \\
\lambda: \mathbbm 1 \otimes - \longrightarrow \Id_{\C} \ , \\
\varrho: - \otimes \mathbbm 1 \longrightarrow \Id_{\C}
\end{gather*}
are isomorphisms of functors such that, for all objects $W, X,Y,Z$ in $\C$,
$$
\xymatrix@C=40pt{
W \otimes (X \otimes (Y \otimes Z)) \ar[r]^-{\alpha_{W,X,Y \otimes Z}} \ar[d]_-{W \otimes \alpha_{X,Y,Z}} & (W\otimes X) \otimes (Y \otimes Z) \ar[r]^{\alpha_{W \otimes X, Y, Z}} & ((W \otimes X) \otimes Y) \otimes Z \ \ \\
W \otimes ((X \otimes Y) \otimes Z) \ar[rr]^-{\alpha_{W,X \otimes Y,Z}} & & (W \otimes (X \otimes Y)) \otimes Z \ar[u]_{\alpha_{W,X,Y} \otimes Z} \ ,
}
$$
commutes and $(\varrho_X \otimes Y) \circ \alpha_{X,\mathbbm 1,Y} = X \otimes \lambda_Y$. In this situation, $\otimes$ is a \textit{monoidal} (or \textit{tensor}) \textit{product functor for $\C$} and $\mathbbm 1$ is the \textit{$($tensor$)$ unit} of $\otimes$.

The monoidal category $(\C, \otimes, \mathbbm 1, \alpha, \lambda, \varrho)$ is a \textit{braided monoidal category} provided that there are natural isomorphisms $\gamma_{X,Y}: X \otimes Y \rightarrow Y \otimes X$ (for  $X, Y \in \Ob \C$) such that the diagrams
$$
\xymatrix@C=40pt{
X \otimes (Y \otimes Z) \ar[r]^-{\alpha_{X,Y,Z}} \ar[d]_-{X \otimes \gamma_{Y,Z}} & (X\otimes Y) \otimes Z \ar[r]^{\gamma_{X \otimes Y, Z}} & Z \otimes (X \otimes Y) \ar@<-3pt>[d]^{\alpha_{Z,X,Y}} \ \ \\
X \otimes (Z \otimes Y) \ar[r]^-{\alpha_{X,Z,Y}} & (X \otimes Z) \otimes Y \ar[r]^-{\gamma_{X,Z} \otimes Y} & (Z \otimes X) \otimes Y \ ,
}
$$
and
$$
\xymatrix@C=40pt{
(X \otimes Y) \otimes Z \ar[r]^-{\alpha_{X,Y,Z}^{-1}} \ar[d]_-{\gamma_{X,Y} \otimes Z} & X \otimes (Y \otimes Z) \ar[r]^-{\gamma_{X, Y \otimes Z}} & (Y \otimes Z) \otimes X \ar[d]^-{\alpha_{Y,Z,X}^{-1}} \\
(Y \otimes X) \otimes Z \ar[r]^-{\alpha_{Y,X,Z}^{-1}} & Y \otimes (X \otimes Z) \ar[r]^-{Y \otimes \gamma_{X, Z}} & Y \otimes (Z \otimes X) 
}
$$
commute for all $X,Y,Z \in \Ob \C$. In this case, $\gamma$ is a \textit{braiding} on the monoidal category $\C$.
If further $(\gamma_{X,Y})^{-1} = \gamma_{Y,X}$ for all $X, Y \in \Ob \C$, we say that the monoidal category is \textit{symmetric} and that $\gamma$ is a \textit{symmetry} on it.
\end{nn}
\begin{rem}\label{rem:opposite} Let $(\C, \otimes, \mathbbm 1, \alpha, \lambda, \varrho)$ be a monoidal category.
\begin{enumerate}[\rm(1)]
\item We will often suppress a huge part of the structure morphisms and simply write $(\C, \otimes, \mathbbm 1)$ instead of $(\C, \otimes, \mathbbm 1, \alpha, \lambda, \varrho)$; if they are needed without priorly having been mentioned, we will refer to them as $\alpha_\C$, $\lambda_\C$ and $\varrho_\C$.
\item It follows from the axioms (cf. \cite[Prop.\,1.1]{JoSt93}) that the following equations hold true for all $X, Y, Z \in \Ob \C$:
$$
\lambda_\bbm1 = \varrho_\bbm1, \quad \varrho_{X \otimes Y} \circ \alpha_{X,Y,\bbm1} = X \otimes \varrho_Y, \quad (\lambda_X \otimes Y) \circ \alpha_{\bbm1, X, Y} = \lambda_{X \otimes Y} \, .
$$
\item Let $\gamma$ be a braiding on $(\C, \otimes, \mathbbm 1, \alpha, \lambda, \varrho)$. Then $\varrho_X \circ \gamma_{\bbm1, X} = \lambda_X$ and $\lambda_X \circ \gamma_{X, \bbm1} = \varrho_X$ for all $X \in \Ob \C$ (cf. \cite[Prop.\,2.1]{JoSt93}).
\item Note that if $(\mathsf C, \otimes, \mathbbm 1, \alpha, \lambda, \varrho)$ is a monoidal category (with braiding $\gamma$), then so is $\mathsf C^\op$ together with the structure morphisms $\alpha^{-1}$, $\lambda^{-1}$ and $\varrho^{-1}$ (with braiding $\gamma^{-1}$).
\end{enumerate}
\end{rem}
\begin{nn}
We say that a monoidal category $(\C, \otimes, \mathbbm 1)$ is a \textit{tensor $K$-category}, if $\C$ is $K$-linear and the tensor product functor $\otimes: \C \times \C \rightarrow \C$ is $K$-bilinear on morphisms, that is, it factors through the \textit{tensor product category} $\C \otimes_K \C$ which is defined as follows:
\begin{align*}
\Ob(\C \otimes_K \C) &:= \Ob(\C \times \C),\\
\Hom_{\C \otimes_K \C}(\underline{X}, \underline{Y}) &:= \Hom_\C(X_1,Y_1) \otimes_K \Hom_\C(X_2, Y_2)
\end{align*}
for objects $\underline{X} = (X_1, X_2)$ and $\underline{Y} = (Y_1, Y_2)$ in $\C \times \C$. A tensor $K$-category is \textit{braided} (\textit{symmetric}) if its underlying monoidal category is braided (symmetric).
\end{nn}
\begin{nn}\label{def:monoidalfunc}
We are going to recall the definition of certain structure preserving functors between monoidal categories. Let $(\mathsf C, \otimes_{\mathsf C}, \mathbbm 1_{\mathsf C})$ and $(\mathsf D, \otimes_{\mathsf D}, \mathbbm 1_{\mathsf D})$ be monoidal categories. Let $\mathfrak A: \mathsf C \rightarrow \mathsf D$ be a functor, and
\begin{align*}
& \phi_{X,Y}: \mathfrak A X \otimes_{\mathsf D} \mathfrak A Y \longrightarrow \mathfrak A (X \otimes_{\mathsf C} Y),\\
& \psi_{X,Y}: \mathfrak A (X \otimes_{\mathsf C} Y) \longrightarrow \mathfrak A X \otimes_{\mathsf D} \mathfrak A Y,
\end{align*}
be natural morphisms in $\D$ (for $X,Y \in \Ob\mathsf C$). Further, let $\phi_0: \mathbbm 1_{\mathsf D} \rightarrow \mathfrak A \mathbbm 1_{\mathsf C}$ and $\psi_0: \mathfrak A \mathbbm 1_{\mathsf C} \rightarrow \mathbbm 1_{\mathsf D}$ be morphisms in $\mathsf D$.

The triple $(\mathfrak A, \phi, \phi_0)$ is called an \textit{almost strong monoidal functor} if $\phi_0$ is invertible and the following diagrams commute for all $X,Y,Z \in \Ob \mathsf C$.

\begin{equation*}
\xymatrix@C=40pt{
\mathfrak A X \otimes_\D (\mathfrak A Y \otimes_\D \mathfrak A Z) \ar[r]^{\mathfrak A X \otimes_\D \phi_{Y,Z}} \ar[d]_{\alpha_\D\mathfrak A} & \mathfrak A X
\otimes_\D \mathfrak A(Y \otimes_\C Z) \ar[r]^{\phi_{X,Y \otimes_\C Z}} & \mathfrak A(X \otimes_\C (Y \otimes_\C Z)) \ar[d]^{\mathfrak A \alpha_\C}\\
(\mathfrak A X \otimes_\D \mathfrak A Y) \otimes_\D \mathfrak A Z \ar[r]^-{\phi_{X,Y} \otimes_\D \mathfrak A Z} & \mathfrak A (X \otimes_\C Y) \otimes_\D \mathfrak A Z
\ar[r]^-{\phi_{X \otimes_\C Y,Z}} & \mathfrak A ((X \otimes_\C Y) \otimes_\C Z)
}
\end{equation*}
\begin{equation*}
\xymatrix{
\mathbbm 1_\D \otimes_\C \mathfrak A X  \ar[d]_{\phi_0 \otimes_\D \mathfrak A X} \ar[r]^-{\lambda_{\D}\mathfrak A} & \mathfrak A X  \\
\mathfrak A \mathbbm 1_\C \otimes_\D \mathfrak A X \ar[r]^-{\phi_{\mathbbm 1_\C, X}} & \mathfrak A(\mathbbm 1_\C \otimes_\C X) \ar[u]_{\mathfrak A \lambda_{\C}}
}
\quad
\xymatrix{
\mathfrak A X \otimes_\D  \mathbbm 1_\D \ar[d]_{\mathfrak A X \otimes_\D \phi_0} \ar[r]^-{\varrho_{\D}\mathfrak A} & \mathfrak A X  \\
\mathfrak A X \otimes_\D \mathfrak A \mathbbm 1_\C \ar[r]^-{\phi_{X, \mathbbm 1_\C}} & \mathfrak A(X \otimes_\C \mathbbm 1_\C)
\ar[u]_{\mathfrak A \varrho_{\C}}
}
\end{equation*}
The triple $(\mathfrak A, \psi, \psi_0)$ is called an \textit{almost costrong monoidal functor} if $(\mathfrak A^\op, \psi, \psi_0)$ is an almost strong monoidal functor. The triple $(\mathfrak A, \phi, \phi_0)$ is called a \textit{strong monoidal functor} if it is an almost strong monoidal functor and $\phi$ is invertible. The triple $(\mathfrak A, \psi, \psi_0)$ is called a \textit{costrong monoidal functor} if it is an almost costrong monoidal functor and $\psi$ is invertible.
\end{nn}

\begin{exas}\label{exa:monoidal}
\begin{enumerate}[\rm(1)]
\item If $(\C, \otimes, \bbm1)$ is a tensor $K$-category, the additive closure $\add(\bbm1)$ of the unit $\bbm1$ (i.e., the full subcategory of objects in $\C$ being isomorphic to some direct summand of a finite direct sum of copies of $\bbm1$) is a braided monoidal subcategory of $\C$.
\item\label{exa:monoidal:1} In the following, we will be mainly interested in the monoidal category of $K$-symmetric $A$-bimodules. It can be realised as the category of left modules over the \textit{enveloping algebra} $A^\ev = A \otimes A^\op$. The monoidal category $(\Mod(A^\ev), \otimes_A, A)$ is braided if, and only if, there is an invertible element $\mathbf r = \mathbf r_1 \otimes \mathbf r_2 \otimes \mathbf r_2 \in A \otimes A \otimes A$ (where implicit summation is understood) such that, for all $a \in A$,
\begin{align*}
\mathbf r_1 \otimes a\mathbf r_2 \otimes \mathbf r_3 &= \mathbf r_1 \otimes \mathbf r_2 \otimes \mathbf r_3a ,\\
\mathbf r_1\mathbf r_2 \otimes \mathbf r_3 &= 1 \otimes 1 , \\
\mathbf r_2 \otimes \mathbf r_3\mathbf r_1 &= 1 \otimes 1 .
\end{align*}
We refer to \cite{AgCaMi12} for a detailed analysis of braidings on the category of $A^\ev$-modules, especially \cite[Thm.\,3.1]{AgCaMi12} and \cite[Thm.\,3.2]{AgCaMi12}.
\item Let $\Gamma$ be a bialgebra, with comultiplication $\Delta : \Gamma \rightarrow \Gamma \otimes \Gamma$ and counit $\varepsilon : \Gamma \rightarrow K$. Then $K$ is a $\Gamma$-module through $\varepsilon$ and $(\Mod(\Gamma), \otimes, K)$ becomes a monoidal category. For two $\Gamma$-modules $M$ and $N$ the $\Gamma$-module structure on $M \otimes N$ is given by
$$
\gamma (m \otimes n) = \Delta(\gamma) \cdot (m \otimes n) \quad \text{(for $m \in M$, $n \in N$)}.
$$
The monoidal category $(\Mod(\Gamma), \otimes, K)$ is symmetric, if $\Gamma$ is cocommutative.
\end{enumerate}
\end{exas}

\begin{exa}\label{exa:monoidalfunc}
Coming back to Example \ref{exa:monoidal}(\ref{exa:monoidal:1}), recall that the algebra $A$ is \textit{Morita equivalent} to a $K$-algebra $B$, if there is a progenerator $P$ for $A$ (that is, a finitely generated projective $A$-module $P$ such that $P \cong A \oplus Q$ for some $A$-module $Q$) with $B \cong \End_A(P)^\op$. The functor $\Hom_A(P,-) : \Mod(A) \rightarrow \Mod(B)$ will be an equivalence then. The progenerator $P$ gives rise to a progenerator $P^\ev$ for $A^\ev$, namely, set $P^\ev = P \otimes \Hom_A(P,A)$. The opposite endomorphism ring of $P^\ev$ over $A^\ev$ is isomorphic to $B^\ev$, hence $A^\ev$ and $B^\ev$ are Morita equivalent. The equivalence $\Mod(A^\ev) \xrightarrow{\sim} \Mod(B^\ev)$ defined by $\Hom_{A^\ev}(P^\ev, -)$ is an \textit{almost strong} monoidal functor; see \cite[Sec.\,5.4]{He14b} for details.
\end{exa}


\section{Fundamental groups}\label{sec:funda}

\begin{nn}
Let $\C$ be a category and let $X$ be an object in $\C$. Recall that $\C$ is a \textit{groupoid} if every morphism in $\C$ is invertible. The \textit{fundamental group} $\pi_1(\C,X)$ of $\C$ at the \textit{base point} $X$ is given by the fundamental group at $X$ of the geometric realisation of the nerve of $\C$. Alternatively, $\pi_1(\C,X)$ may be expressed as $\End_{\mathsf G(\C)}(X)$, where $\mathsf G(\C)$ denotes the so called \textit{fundamental} or \textit{Quillen groupoid} of $\C$ (see \cite{Qu72}). It comes with a functor $g_\C : \C \rightarrow \mathsf G(\C)$ which is universal in the following sense: for every groupoid $\mathsf G$ and every functor $\C \rightarrow \mathsf G$ there is a unique functor $\mathsf G(\C) \rightarrow \mathsf G$ such that the diagram
$$
\xymatrix@!C=10pt@!R=10pt{
& \C \ar[dr] \ar[dl]_-{g_\C} & \\
\mathsf G(\C) \ar[rr] && \mathsf G
}
$$
commutes. In particular, for any functor $\mathfrak X: \C \rightarrow \D$, we get a unique functor $\mathsf G(\mathfrak X): \mathsf G(\C) \rightarrow \mathsf G(\D)$ such that $\mathsf G(\mathfrak X) \circ g_\C = g_\D \circ \mathfrak X$. If $\C$ is a groupoid, then $\mathsf G(\C) \cong \C$ as categories.

Let us construct $\mathsf G(\C)$ explicitly. A \textit{path} from $X$ to $Y$ in $\C$ is a sequence of objects $X = X_0, X_1, \dots, X_n = Y$ and morphisms $f_0, \dots, f_{n-1}$ such that
$$
f_i \in \Hom_\C(X_i, X_{i+1}) \quad \text{or} \quad f_i \in \Hom_\C(X_{i+1}, X_{i}) \quad \text{(for $i = 0, \dots, n-1$).}
$$
We denote such a path by $w = (f_0, \dots, f_{n-1})$. The number $n$ is the \textit{length} of the path $w$ and a path from $X$ to $X$ is a \textit{loop} based at $X$. There is a unique loop of length $0$ based at $X$. Two paths $w = (f_0, \dots, f_{n-1})$ and $w' = (f'_0, \dots, f'_{n})$ from $X$ to $Y$ are \textit{elementary homotopic} if $w'$ arises from $w$ by replacing a morphism in $w$ which fits inside a commutative triangle
$$
\xymatrix@!C=10pt@R=6pt{
&&& U \ar[ddr] \ar[ddl] & \\
\Delta & \equiv &&&\\
&&V \ar[rr] & & W
}
$$
by the other two morphisms in $\Delta$. We further require that the loop of the length $0$ at $X$ is elementary homotopic to the loop of length $1$, given by the identity of $X$. 
\medskip

Denote by $\mathrm{Path}_\C(X,Y)$ the set of all paths from $X$ to $Y$ in $\C$. Now, the objects of $\mathsf G(\C)$ are given by the objects of $\C$; the set $\Hom_{\mathsf G(\C)}(X,Y)$ of morphisms $X \rightarrow Y$ in $\mathsf G(\C)$ is given by the quotient
$$
\Hom_{\mathsf G(\C)}(X,Y) = \mathrm{Path}_\C(X,Y) / \sim \, ,
$$
where $\sim$ denotes equivalence relation on $\mathrm{Path}_\C(X,Y)$ generated by elementary homotopy. The functor $g_\C$ is given by the identity on objects, whereas it sends a morphism $f$ in $\C$ to the equivalence class of the path $w = (f)$. If $\mathfrak X : \C \rightarrow \mathsf G$ is a functor into a groupoid $\mathsf G$, the unique functor $\overline{\mathfrak X}:\mathsf G(\C) \rightarrow \mathsf G$ with $\overline{\mathfrak X} \circ g_\C = \mathfrak X$ is given by sending (the equivalence class of) a path $(f_0, \dots, f_{n-1})$ in $\Hom_{\mathsf G(\C)}(X,Y)$ to
$$
\mathfrak X(f_{n-1})^{\sigma_{n-1}} \circ \cdots \circ \mathfrak X(f_0)^{\sigma_0}.
$$
The exponent $\sigma_i$ for $i = 0, \dots, n-1$ is defined to be $t_i - s_i \in \{-1, 1\}$ when $f_i: X_{s_i} \rightarrow X_{t_i}$, $s_i, t_i \in \{i, i+1\}$. If $\mathsf G = \C$ is itself a groupoid and $\mathfrak X = \Id_\C$, then $\overline{\mathfrak X}$ is an equivalence of categories.
\end{nn}

\begin{nn}\label{nn:uC}
Let $\C$ be an exact $K$-category and $n \geqslant 1$ be an integer. For objects $X$ and $Y$ in $\C$, a morphism $f: S \rightarrow T$ of admissible $n$-extensions $S$ and $T$ of $X$ by $Y$ is a commutative diagram
$$
\xymatrix@C=18pt{
S \ar[d]_-f & \equiv & 0 \ar[r] & Y \ar[r] \ar@{=}[d] & E_{n-1} \ar[d]_-{f_{n-1}} \ar[r] & \cdots \ar[r] & E_0 \ar[r] \ar[d]^-{f_0} & X \ar[r] \ar@{=}[d] & 0\\
T & \equiv & 0 \ar[r] & Y \ar[r] & F_{n-1} \ar[r] & \cdots \ar[r] & F_0 \ar[r] & X \ar[r] & 0
}
$$
in $\C$. It is thus apparent, what the composition of morphisms, and the identity morphisms should be. Thus we obtain the \textit{category $\mathcal Ext^n_\C(X,Y)$ of $n$-extensions} of $X$ by $Y$. We define $\mathcal Ext^0_\C(X,Y)$ to be the discrete category $\Hom_\C(X,Y)$. Truncation of the object $X$ on the right yields a functor $(-)^\natural$ from $\mathcal Ext^n_\C(X,Y)$ into the category of complexes over $\C$ whose homology is concentrated in degree $0$.

From now on, let us assume that $\C$ is
\begin{enumerate}[\rm(1)]
\item closed under kernels of epimorphisms (that is, $f \in \C$ is an admissible epimorphism if, and only if, $f$ is an epimorphism in $\A_\C$), or
\item closed under cokernels of monomorphisms (that is, $f \in \C$ is an admissible monomorphism if, and only if, $f$ is a monomorphism in $\A_\C$).
\end{enumerate}
Let $S$ be an admissible $n$-extension of $X$ by $Y$. The following construction from \cite{He14b} gives rise to a homomorphism
$$
u_\C : \Ext^{n-1}_\C(X,Y) \longrightarrow \pi_1(\mathcal Ext^n_\C(X,Y), S)
$$
of groups. Let us denote by $S_n$ the \textit{trivial} $n$-extension of $X$ by $Y$. For $n = 1$, this is the split extension $0 \rightarrow Y \rightarrow Y \oplus X \rightarrow X \rightarrow 0$, whereas for $n \neq 1$,
$$
\xymatrix@C=18pt{
S_n & \equiv & 0 \ar[r] & Y \ar@{=}[r] & Y \ar[r] & 0 \ar[r] & \cdots \ar[r] & 0 \ar[r] & X \ar@{=}[r] & X \ar[r] & 0 \, .
}
$$
If $f$ belongs to $\Hom_\C(X,Y)$, and $0 \rightarrow Y \xrightarrow{d} \mathbb E \rightarrow X \rightarrow 0$ to $\mathcal Ext^{n-1}_\C(X,Y)$ for $n \neq 1$, the following diagrams define respective loops in $\mathcal Ext^1_\C(X,Y)$ and $\mathcal Ext^{n}_\C(X,Y)$.
$$
\xymatrix@C=18pt{
0 \ar[r] & Y \ar[r] \ar@{=}[d] & Y \oplus X \ar[d]^{
\left[\begin{smallmatrix}
1 & f\\
0 & 1
\end{smallmatrix}\right]
} \ar[r] & X \ar[r] \ar@{=}[d] & 0\\
0 \ar[r] & Y \ar[r] & Y \oplus X \ar[r] & X \ar[r] & 0
}
$$
$$
\xymatrix{
0 \ar[r] & Y \ar@{=}[r] \ar@{=}[d] & Y \ar[r] & 0 \ar[r] & \cdots \ar[r] & 0 \ar[r] & X \ar@{=}[r] & X \ar[r] \ar@{=}[d] & 0\\
0 \ar[r] & Y \ar[r]^-{\left[\begin{smallmatrix}
1 & 1
\end{smallmatrix}\right]^t} \ar@{=}[d] & Y \oplus Y \ar[u]^-{\left[\begin{smallmatrix}
1 & 0\\
\end{smallmatrix}\right]} \ar[d]_-{\left[\begin{smallmatrix}
0 & 1
\end{smallmatrix}\right]} \ar[r]^-{\left[\begin{smallmatrix}
d & -d
\end{smallmatrix}\right]} & E_{n-2} \ar[d] \ar[u] \ar[r] & \cdots \ar[r] & E_1 \ar[d] \ar[r] \ar[u] & E_0 \ar[r] \ar[d] \ar[u] & X \ar[r] \ar@{=}[d] & 0\\
0 \ar[r] & Y \ar@{=}[r] & Y \ar[r] & 0 \ar[r] & \cdots \ar[r] & 0 \ar[r] & X \ar@{=}[r] & X \ar[r] & 0
}
$$
Thus we obtain a (well-defined) map $u^\circ_\C : \Ext^{n-1}_\C(X,Y) \longrightarrow \pi_1(\mathcal Ext^n_\C(X,Y), S_n)$ for each $n \geqslant 1$. Now, if $S$ is any $n$-extension, taking the Baer sum $- \boxplus S$ with $S$ is an endofunctor on $\mathcal Ext^n_\C(X,Y)$ and thus gives rise to an endofunctor of the corresponding fundamental groupoid. We therefore obtain a group isomorphism
$$
\pi_1 (\mathcal Ext^n_\C(X,Y), S_n) \longrightarrow \pi_1 (\mathcal Ext^n_\C(X,Y), S_n \boxplus S) \, .
$$
The extensions $S$ and $S_n \boxplus S$ are connected by a morphism in $\mathcal Ext^n_\C(X,Y)$ and conjugation with this morphism finally leads to an isomorphism
$$
\pi_1 (\mathcal Ext^n_\C(X,Y), S_n) \longrightarrow \pi_1 (\mathcal Ext^n_\C(X,Y), S) \, .
$$
We let $u_\C$ be the composition of this isomorphism with $u^\circ_\C$. The map $u_\C$ is bijective since $u^\circ_\C$ is (cf.\,\cite[Thm.\,3.1.8 and Prop.\,3.2.5]{He14b}) and hence we obtain the following Theorem (attributed to V.\,Retakh).
\end{nn}

\begin{thm}[see {\cite{He14b} and \cite{Re86}}]\label{thm:retakh}
Let $\C$ be an exact $K$-category as above. Let $n \geqslant 1$ be an integer, $X$, $Y$ be objects in $\C$, and let $S$ be an admissible $n$-extension of $X$ by $Y$. Then there is an isomorphism
$$
\Ext^{n-1}_\C(X,Y) \longrightarrow \pi_1(\mathcal Ext^n_\C(X,Y), S)
$$
of groups. In particular, $\pi_1(\mathcal Ext^{n}_\C(X,Y), S)$ is abelian.
\end{thm}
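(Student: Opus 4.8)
The plan is to upgrade the bijection $u_\C$ assembled in \ref{nn:uC} to an isomorphism of groups, and then to read off commutativity of the target. Recall that $u_\C$ factors as the explicit map $u^\circ_\C \colon \Ext^{n-1}_\C(X,Y) \to \pi_1(\mathcal Ext^n_\C(X,Y), S_n)$ defined by the displayed loops, followed by a base-point transport $\pi_1(\mathcal Ext^n_\C(X,Y), S_n) \to \pi_1(\mathcal Ext^n_\C(X,Y), S_n \boxplus S)$ induced by the Baer-sum endofunctor $-\boxplus S$, and a change-of-base-point map $\pi_1(\mathcal Ext^n_\C(X,Y), S_n \boxplus S) \to \pi_1(\mathcal Ext^n_\C(X,Y), S)$ obtained by conjugating along a connecting morphism. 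The latter two are group isomorphisms by general principles of fundamental groupoids: conjugation along a path is always an isomorphism of fundamental groups, and the Baer-sum functor induces the isomorphism recorded in \ref{nn:uC}. Hence it suffices to show that $u^\circ_\C$ is a group homomorphism and a bijection.

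First I would establish the homomorphism property, i.e. that $u^\circ_\C$ carries the Baer sum of $[\mathbb E]$ and $[\mathbb F]$ to the concatenation of the homotopy classes of the loops attached to $\mathbb E$ and to $\mathbb F$, for any $[\mathbb E],[\mathbb F]\in\Ext^{n-1}_\C(X,Y)$. For $n=1$ this is transparent: $\mathcal Ext^1_\C(X,Y)$ is already a groupoid (a morphism of extensions fixing the outer terms is an isomorphism by the short five lemma), so $\pi_1(\mathcal Ext^1_\C(X,Y), S_1)$ is the automorphism group of the split extension, every automorphism has the shape $\smatrix{1 & f \\ 0 & 1}$ with $f \in \Hom_\C(X,Y) = \Ext^0_\C(X,Y)$, composition corresponds to addition of the $f$'s, and addition is exactly the Baer sum on $\Ext^0_\C(X,Y)$. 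For $n \geqslant 2$ I would work with the zigzag presentation of the loops and produce an explicit finite chain of elementary homotopies identifying the loop of the Baer sum with the composite of the loops of $\mathbb E$ and $\mathbb F$, splicing and re-associating middle terms by means of the pushouts along admissible monomorphisms and pullbacks along admissible epimorphisms available in $\C$. This comparison of loops is the step that carries the computational weight.

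For bijectivity I would invoke the construction already recorded in \ref{nn:uC}, where $u^\circ_\C$ is a bijection by the explicit form of Retakh's theorem proved in \cite[Thm.\,3.1.8 and Prop.\,3.2.5]{He14b}. A self-contained argument would produce the inverse by first bringing an arbitrary loop at $S_n$ into the standard two-step shape $S_n \to T \leftarrow S_n$ and then reading an $(n-1)$-extension off the middle term $T$, with injectivity obtained by tracing a null-homotopy of the standard loop back to a morphism of $(n-1)$-extensions that witnesses triviality of the class. This surjectivity-and-injectivity analysis is the genuine obstacle, and the reason the statement has content rather than being formal; in the present write-up it is imported from \cite{He14b}.

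Combining the two previous steps, $u^\circ_\C$, and hence $u_\C$, is an isomorphism of groups, which proves the first assertion. The final claim is then immediate: $\Ext^{n-1}_\C(X,Y)$ is abelian because it is an $\Ext$-group under Baer sum, and a group isomorphism transports this commutativity to $\pi_1(\mathcal Ext^n_\C(X,Y), S)$.
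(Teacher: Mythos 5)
Your proposal follows the paper's own route: the theorem is obtained exactly as in \ref{nn:uC} by composing the explicit map $u^\circ_\C$ with the Baer-sum and conjugation base-point changes, with the genuinely hard content (well-definedness, the homomorphism property in higher degrees, and bijectivity) imported from \cite[Thm.\,3.1.8, Prop.\,3.2.5]{He14b}, and abelianness read off from the Baer-sum group on $\Ext^{n-1}_\C(X,Y)$. Your added detail for $n=1$ via the automorphism group $\smatrix{1 & f \\ 0 & 1}$ of the split extension is correct but does not change the substance; this is essentially the same proof.
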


\begin{nn}
If $\C$ is the category of modules over a ring, the isomorphism $u_\C$ admits a very nice explicit formulation in terms of projective resolutions, as presented in \cite{Sch98}. We will recall it, along with some of its properties. Let $R$ be any ring and let $n \geqslant 0$ be an integer. Fix two $R$-modules $U$ and $V$. We want to describe an isomorphism
$$
\mu: \Ext^n_R(U,V) \cong H^n(\Hom_{R}(\mathbb PU, V)) \longrightarrow \pi_1(\mathcal Ext^{n+1}_{R}(U,V), S)
$$
of groups for every base point $S$ and any fixed projective resolution $\mathbb PU \rightarrow U \rightarrow 0$ of $U$ over $R$. Assume that the projective resolution $\mathbb PU \rightarrow U \rightarrow 0$ of $U$ is given as follows.
$$
\xymatrix@C=20pt{
\cdots \ar[r]^-{\pi_{n+2}} & P_{n+1} \ar[r]^-{\pi_{n+1}} & P_{n} \ar[r]^-{\pi_{n}} & P_{n-1} \ar[r]^-{\pi_{n-1}} & \cdots \ar[r]^-{\pi_1} & P_0 \ar[r]^-{\pi_0} & U \ar[r] & 0
}
$$
Fix an $R$-linear map $\varphi: P_{n+1} \rightarrow V$ satisfying $\varphi \circ \pi_{n+2} = 0$. The pushout diagram
$$
\xymatrix@C=20pt{
\cdots \ar[r]^-{\pi_{n+2}} & P_{n+1} \ar[d]_-{\varphi} \ar[r]^-{\pi_{n+1}} & P_{n} \ar[r]^-{\pi_{n}} \ar[d] & P_{n-1} \ar[r]^-{\pi_{n-1}} \ar@{=}[d] & \cdots \ar[r]^-{\pi_1} & P_0 \ar[r]^-{\pi_0} \ar@{=}[d] & U \ar[r] \ar@{=}[d] & 0\\
0 \ar[r] & V \ar[r]^-{p_{n+1}} & P \ar[r]^-{p_n} & P_{n-1} \ar[r]^-{p_{n-1}} & \cdots \ar[r]^-{p_1} & P_0 \ar[r]^-{p_0} & U  \ar[r] & 0
}
$$
has exact rows. We may regard $P$ as the quotient
$$
P = \frac{V \oplus P_{n}}{\{(\varphi(x), -\pi_{n+1}(x)) \mid x \in P_{n+1}\}} = \Coker(\varphi \oplus (-\pi_{n+1}))
$$
and hence express the maps $p_{n+1}: Y \rightarrow P$, $p_n: P \rightarrow P_{n-1}$ as $p_{n+1}(v) = (v,0)$ and $p_n(v,p) = \pi_{n}(p)$ (for $p \in P_n$, $v \in V$).

If $\psi: P_{n} \rightarrow V$ is an $R$-module homomorphism, then $\varphi' = \varphi + \psi \circ \pi_{n+1}$ will also satisfy $\varphi' \circ \pi_{n+2} = 0$. Hence we may consider $S(\varphi')$ as well. The assignment $(v,p) \mapsto (v - \psi(p),p)$ gives rise to a well-defined map
$$
\frac{V \oplus P_{n}}{\{(\varphi(x), -\pi_{n+1}(x)) \mid x \in P_{n+1}\}} \longrightarrow \frac{V \oplus P_{n} }{\{(\varphi'(x), -\pi_{n+1}(x)) \mid x \in P_{n+1}\}} \ ,
$$
which itself defines a morphism $\mu(\psi): S(\varphi) \rightarrow S(\varphi + \psi \circ \pi_{n+1})$ of $(n+1)$-extensions. If we chose $\psi$ such that $\psi \circ \pi_{n+1} = 0$, $\mu(\psi)$ will be an endomorphism of $S(\varphi)$ in the category $\mathcal Ext^{n+1}_{R}(U,V)$, i.e., a loop of length $1$ based at $S(\varphi)$. In \cite[Sec.\,4]{Sch98} it is shown that hereby one obtains a well-defined map
$$
\mu: H^n(\Hom_{R}(\mathbb PU, V)) \longrightarrow \pi_1(\mathcal Ext^{n+1}_{R}(U,V), S(\varphi))
$$
which is an isomorphism (cf. \cite[Thm.\,1.1]{Sch98}). Given any $(n+ 1)$-extension $S$ of $U$ by $V$, say
$$
S \quad \equiv \quad 0 \longrightarrow V \longrightarrow E_{n} \longrightarrow \cdots \longrightarrow E_0 \longrightarrow U \longrightarrow 0 \ ,
$$
there is a map $\Phi: \mathbb PU \rightarrow S^\natural$ of complexes lifting $\id_U$, where $S^\natural$ denotes the truncated complex 
$$
0 \longrightarrow V \longrightarrow E_{n} \longrightarrow \cdots \longrightarrow E_0 \ .
$$
The component map $\varphi_{n+1}: P_{n+1} \rightarrow V$ is a cocycle in $\Hom_R(\mathbb PU, V)$. The universal property of the pushout
$$
V \oplus_{P_{n+1}} P_{n} = \frac{V \oplus P_{n}}{\{(\varphi_{n+1}(x), -\pi_{n+1}(x)) \mid x \in P_{n+1}\}} = \Coker(\varphi_{n+1} \oplus (-\pi_{n+1}))
$$
yields a map $\tilde{\varphi}_{n}: V \oplus_{P_{n+1}} P_{n} \rightarrow E_n$ such that
$$
\xymatrix@C=20pt{
0 \ar[r] & V \ar[r]^-{p_{n+1}} \ar@{=}[d] & V \oplus_{P_{n+1}} P_{n} \ar[r]^-{p_n} \ar[d]_-{\tilde{\varphi}_{n}} & P_{n-1} \ar[r]^-{p_{n-1}} \ar[d]_-{\varphi_{n-1}} & \cdots \ar[r]^-{p_1} & P_0 \ar[r]^-{p_0} \ar[d]^-{\varphi_0} & U \ar[r] \ar@{=}[d] & 0\\
0 \ar[r] & V \ar[r]^-{e_{n+1}} & E_n \ar[r]^-{e_n} & E_{n-1} \ar[r]^-{e_{n-1}} & \cdots \ar[r]^-{e_1} & E_0 \ar[r]^-{e_0} & U  \ar[r] & 0
}
$$
commutes, and hence a map $\tilde{\Phi}: S(\varphi_{n+1}) \rightarrow S$ of extensions. Observe that the map $\tilde{\varphi}_{n}$ is induced by the map
$$
V \oplus P_n \longrightarrow E_n, \ (v,p) \mapsto e_{n+1}(v) + \varphi_n(p) \ .
$$
Conjugation with (the equivalence class of) $\tilde{\Phi}$ delivers an isomorphism
$$
c_\Phi: \pi_1(\mathcal Ext^{n+1}_{R}(U,V), S(\varphi_{n+1})) \longrightarrow \pi_1(\mathcal Ext^{n+1}_{R}(U,V), S)
$$
which depends on the chosen lifting $\Phi$. However, the composition $c_\Phi \circ \mu_{\varphi = \varphi_{n+1}}$ does not (use the remarks before \cite[Lem.\,4.3]{Sch98}). Hence we may (and will) regard $\mu$ as an isomorphism
$$
\mu: H^n(\Hom_{R}(\mathbb PU, V)) \longrightarrow \pi_1(\mathcal Ext^{n+1}_{R}(U,V), S) \ .
$$

\begin{lem}[{see \cite[Lem.\,5.3.3]{He14b}}]
Let $\C$ be the category $\Mod(R)$. The maps $\mu$ and $u_\C$ are equal $($after identifying $\Ext^n_R(U,V)$ with $H^n(\Hom_{R}(\mathbb PU, V)))$.
\end{lem}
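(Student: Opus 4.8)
The plan is to use that $\mu$ and $u_\C$ are assembled by the very same recipe: each is the canonical homomorphism attached to one distinguished base point, composed with a transport isomorphism that carries it to the prescribed base point $S$ inside the fundamental groupoid $\mathsf G(\mathcal Ext^{n+1}_R(U,V))$. For $u_\C$ the distinguished base point is the trivial extension $S_{n+1}$ and the transport is the Baer-sum translation along $-\boxplus S$ followed by conjugation with a connecting morphism; for $\mu$ it is Schwede's pushout extension $S(\varphi)$ and the transport is the conjugation $c_\Phi$. The decisive point is that, by Theorem~\ref{thm:retakh}, every vertex group $\pi_1(\mathcal Ext^{n+1}_R(U,V),T)$ is abelian. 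In a groupoid with abelian vertex groups the transport isomorphism between the vertex groups of two connected objects is independent of the connecting morphism, since two such morphisms produce transports differing by conjugation with an automorphism of the target, which is trivial. Hence both $u_\C$ and $\mu$ are the \emph{canonical} transport of their value at a base point, and it suffices to prove $\mu=u_\C$ as maps into $\pi_1(\mathcal Ext^{n+1}_R(U,V),S)$ for one conveniently chosen $S$.

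I would take $S=S(\varphi)$ for a cocycle $\varphi\colon P_{n+1}\to V$ with $\varphi\circ\pi_{n+2}=0$, so that the pushout term $P=\Coker(\varphi\oplus(-\pi_{n+1}))$ is available. A class in $\Ext^n_R(U,V)=H^n(\Hom_R(\mathbb P U,V))$ is represented by a cocycle $\psi\colon P_n\to V$ with $\psi\circ\pi_{n+1}=0$, and $\mu([\psi])$ is then the length-one loop at $S(\varphi)$ given by the shear $(v,p)\mapsto(v-\psi(p),p)$ on $P$ and the identity in all other degrees. On the other hand I would unwind $u_\C$ at $S(\varphi)$: one starts from the explicit length-two loop that $u^\circ_\C$ attaches at the trivial extension $S_{n+1}$, built from the doubled term $V\oplus V$ and the maps $\smatrix{1 & 0}$, $\smatrix{0 & 1}$ and $\smatrix{d & -d}$ (where $d$ is read off from the extension representing $[\psi]$ under the identification $\Ext^n_R(U,V)\cong H^n(\Hom_R(\mathbb P U,V))$), and transports it to $S(\varphi)$ along a connecting morphism built from a chain lift $\mathbb P U\to S(\varphi)^\natural$ of $\id_U$.

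The core of the argument is then to compare these two loops directly in $\mathcal Ext^{n+1}_R(U,V)$ by exhibiting an elementary homotopy, that is, a chain of commutative triangles of extension morphisms in the sense recalled in Section~\ref{sec:funda}. The requisite triangles are supplied by the universal property of the pushout defining $P$ together with the comparison maps $\varphi_i\colon P_i\to E_i$ of the chain lift: these identify the two morphisms emanating from the doubled middle extension with a single endomorphism of $S(\varphi)$, namely the shear by $\psi$. Reading the length-two loop through this triangle collapses it to the length-one loop $\mu([\psi])$, which yields the desired equality in $\pi_1(\mathcal Ext^{n+1}_R(U,V),S(\varphi))$. A convenient streamlining is to invoke naturality of both $\mu$ and $u_\C$ in the coefficient module $V$ first, reducing the verification to the universal cocycle $P_n\to\Coker(\pi_{n+1})$ and hence to a single model computation.

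I expect the main obstacle to be exactly this explicit homotopy: translating the extension-theoretic, Baer-sum presentation underlying $u^\circ_\C$ — with its length-two loop and doubled middle term — into the pushout/cocycle presentation underlying $\mu$, and checking that the intervening diagrams genuinely commute so that they constitute elementary homotopies and not merely degreewise identities. One must in addition treat the low-degree case $n=0$ separately, where the base point is the split extension $0\to V\to V\oplus U\to U\to 0$ and $u^\circ_\C$ is given by the shear $\smatrix{1 & f \\ 0 & 1}$, and keep careful track of the orientation of the connecting morphisms, since the abelianness reduction only removes the residual ambiguity once the transport isomorphisms have been correctly set up.
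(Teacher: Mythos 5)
First, a point of reference: the paper does not prove this lemma at all --- it is imported verbatim from \cite[Lem.\,5.3.3]{He14b} --- so there is no in-text argument to measure your proposal against, and what follows assesses it on its own terms. Your soft reductions are essentially correct. By Theorem~\ref{thm:retakh} all vertex groups of the fundamental groupoid are abelian, so path-transport between connected base points is canonical; $\mu$ is compatible with such transport by the remark the paper makes just before stating the lemma, and $u_\C$ is as well --- though for $u_\C$ you should say explicitly that the translation $(-\boxplus S)_\ast$ is induced by a \emph{functor}, not by a path, so its compatibility with changing $S$ rests on bifunctoriality of the Baer sum together with the abelianness argument. Since every connected component of $\mathcal Ext^{n+1}_R(U,V)$ contains an extension of the form $S(\varphi)$, checking the equality at these base points does suffice.

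The genuine gap is that the entire content of the lemma sits in the step you defer. You must exhibit, in $\mathcal Ext^{n+1}_R(U,V)$, an elementary homotopy between (i) the image under $-\boxplus S(\varphi)$, conjugated to the base point $S(\varphi)$, of the length-two loop $S_{n+1}\leftarrow T\rightarrow S_{n+1}$ with doubled term $V\oplus V$ and differentials $\smatrix{1 & 1}^t$, $\smatrix{d & -d}$, and (ii) the length-one shear $(v,p)\mapsto(v-\psi(p),p)$ of $S(\varphi)$. Asserting that the required commuting triangles ``are supplied by the universal property of the pushout together with the comparison maps'' is not a proof: both arrows of the $u^\circ_\C$-loop point \emph{out of} the doubled middle extension, so one has to produce an explicit morphism from (a conjugate of) $S(\varphi)$ into the Baer sum whose composites with $\smatrix{1 & 0}$ and $\smatrix{0 & 1}$ are the identity and the shear respectively, and verify commutativity in every degree; this is exactly where sign and orientation errors live, and you acknowledge as much without resolving it. Your proposed streamlining via naturality in $V$ and the universal cocycle $P_n\to\Coker(\pi_{n+1})$ is also shaky as stated, because the base point $S(\varphi)$ depends on $\varphi$ and is not the pushforward of a universal base point along $\overline{\psi}$, so the naturality square does not obviously have matching base points. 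A cleaner way to close the argument with the tools the paper already records is to apply Lemma~\ref{lem:preimage_mu} to the length-two loop representing $u_\C([\psi])$ at $S(\varphi)$: writing down a null-homotopy of the difference of the two lifted chain maps and checking that its degree-$n$ component is $\psi$ computes $\mu^{-1}(u_\C([\psi]))=[\psi]$ directly and bypasses the manipulation of elementary homotopies. As it stands, your text is a correct plan with the decisive computation missing.
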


For a loop $w$ of length $2$ in $\mathcal Ext^{n+1}_R(U,V)$ we explicitly describe a preimage of its equivalence class in $\pi_1(\mathcal Ext^{n+1}_R(U,V), S)$ under the isomorphism $\mu$. Fix an $(n+1)$-extension $S$ of $U$ by $V$, and a map $\Phi: \mathbb PU \rightarrow S^\natural$ of complexes lifting $\id_U$. Assume that $f: S \rightarrow T$ and $g: S \rightarrow T$ are morphisms in $\mathcal Ext^{n+1}_R(U,V)$. They define a loop $S \rightarrow T \leftarrow S$ in $\mathcal Ext^{n+1}_R(U,V)$ based at $S$. The compositions $f \circ \Phi$ and $g \circ \Phi$ define morphisms $\mathbb PU \rightarrow T^\natural$ of complexes lifting $\id_U$. Hence the difference $f \circ \Phi - g \circ \Phi$ is null-homotopic, say via the null-homotopy $s_i: P_i \rightarrow F_{i+1}$ ($i \geqslant 0$). Note that $s_n: P_n \rightarrow V$ has to be a cocycle in $\Hom_R(\mathbb PU, V)$.

\begin{lem}[{see \cite[Lem.\,4.3]{Sch98}}]\label{lem:preimage_mu}
Keep the notations from above. The equivalence class of the cocycle $s_{n}: P_n \rightarrow V$ is mapped, by $\mu$, to the equivalence class of the loop
$$
\xymatrix@C=20pt{S \ar[r]^-f & T & \ar[l]_-g S}
$$
in $\pi_1(\mathcal Ext^{n+1}_R(U,V), S)$.
\end{lem}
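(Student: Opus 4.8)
The plan is to verify the identity $\mu([s_n]) = [S \xrightarrow{f} T \xleftarrow{g} S]$ by unwinding the definition of $\mu$ at the base point $S$ and transporting everything to the \emph{standard} extension $S(\varphi_{n+1})$, where $\mu$ is given by the explicit length-$1$ loops from its construction. Recall that $\mu$ based at $S$ is, by definition, the composite $c_\Phi \circ \mu_{\varphi = \varphi_{n+1}}$, where $\mu_\varphi$ sends a cocycle $\psi \colon P_n \to V$ with $\psi \circ \pi_{n+1} = 0$ to the length-$1$ loop $E_\psi \colon S(\varphi_{n+1}) \to S(\varphi_{n+1})$ induced by $(v,p) \mapsto (v - \psi(p), p)$, and $c_\Phi$ is conjugation by the comparison morphism $\tilde\Phi \colon S(\varphi_{n+1}) \to S$. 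Hence $\mu([s_n]) = [\tilde\Phi] \circ [E_{s_n}] \circ [\tilde\Phi]^{-1}$ in the Quillen groupoid $\mathsf G(\mathcal Ext^{n+1}_R(U,V))$. Since $f$ and $g$ restrict to the identity on $V$, both lifts $f \circ \Phi$ and $g \circ \Phi$ of $\id_U$ have the same top component $\varphi_{n+1}$, so they induce morphisms $S(\varphi_{n+1}) \to T$; one checks directly from the formula for $\tilde\Phi$ that these are $f \circ \tilde\Phi$ and $g \circ \tilde\Phi$. A short computation in the groupoid then rewrites the conjugated loop as $[g\tilde\Phi]^{-1} \circ [f\tilde\Phi]$, so that the assertion reduces to the equality $[f\tilde\Phi] = [g\tilde\Phi \circ E_{s_n}]$ of morphisms $S(\varphi_{n+1}) \to T$ in $\mathsf G(\mathcal Ext^{n+1}_R(U,V))$.

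To establish this equality I would compare $f\tilde\Phi$ and $g\tilde\Phi \circ E_{s_n}$ componentwise, using the chain homotopy $s$ between $f\Phi$ and $g\Phi$. On the pushout term $P = \Coker(\varphi_{n+1} \oplus (-\pi_{n+1}))$ of $S(\varphi_{n+1})$ the morphism $f\tilde\Phi$ acts by $(v,p) \mapsto \iota(v) + (f\Phi)_n(p)$, where $\iota \colon V \hookrightarrow F_n$ is the inclusion belonging to $T$, while $g\tilde\Phi \circ E_{s_n}$ acts by $(v,p) \mapsto \iota(v) + (g\Phi)_n(p) - \iota\, s_n(p)$. The degree-$n$ homotopy relation relates $(f\Phi)_n - (g\Phi)_n$ to $\iota\, s_n$ and to $s_{n-1}\pi_n$, and $E_{s_n}$ is built precisely so that its $s_n$-contribution cancels the $\iota\, s_n$-term; what remains on $P$ is exactly $s_{n-1}\pi_n$, coming from the \emph{lower} homotopy datum $s_{n-1}$. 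In each degree below $n$ the two morphisms differ by the corresponding lower terms. In other words, $f\tilde\Phi$ and $g\tilde\Phi \circ E_{s_n}$ are chain homotopic through the truncated homotopy $(s_0, \dots, s_{n-1}, 0)$, whose top component now vanishes; injectivity of $\iota$ forces the induced degree-$n$ homotopy term to be zero, confirming that the entire $s_n$-contribution has been absorbed into $E_{s_n}$.

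The remaining, and decisive, step is to show that two morphisms of $(n+1)$-extensions which are chain homotopic via a homotopy with vanishing top component become \emph{equal} in $\mathsf G(\mathcal Ext^{n+1}_R(U,V))$. This is where I expect the main obstacle to lie, because it is exactly the point at which the lower terms $s_0, \dots, s_{n-1}$, which carry no cohomological information, must be shown to act trivially on $\pi_1$; this is the coboundary-insensitivity underlying the well-definedness of $\mu$ recalled before \cite[Lem.\,4.3]{Sch98}. I would prove it by descending induction on the largest index $k < n$ with $s_k \neq 0$, realising the modification associated with each $s_k$ as a single elementary homotopy in the extension category (a commutative triangle of extension morphisms manufactured from $s_k$), each step preserving the vanishing of all higher homotopy terms. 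Once this is in place we obtain $f\tilde\Phi = g\tilde\Phi \circ E_{s_n}$ in the groupoid; the commutative triangle formed by $E_{s_n}$, $g\tilde\Phi$ and $g\tilde\Phi \circ E_{s_n}$, together with cancellation of $g\tilde\Phi$ against itself, then collapses the loop $S(\varphi_{n+1}) \xrightarrow{f\tilde\Phi} T \xleftarrow{g\tilde\Phi} S(\varphi_{n+1})$ to the length-$1$ loop $E_{s_n} = \mu_\varphi([s_n])$, and conjugating back by $\tilde\Phi$ yields $\mu([s_n]) = [S \xrightarrow{f} T \xleftarrow{g} S]$, as claimed.
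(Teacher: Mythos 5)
Your reduction is sound and is essentially the route behind the result: the paper itself gives no proof of this lemma and defers entirely to \cite[Lem.\,4.3]{Sch98}, and the argument there does proceed as you describe --- transport to the standard extension $S(\varphi_{n+1})$, observe that $f\circ\Phi$ and $g\circ\Phi$ share the top component $\varphi_{n+1}$ and hence induce $f\tilde\Phi,\,g\tilde\Phi \colon S(\varphi_{n+1}) \rightarrow T$, and reduce the claim to an identity of the form $[f\tilde\Phi] = [g\tilde\Phi \circ E_{\pm s_n}]$ in the fundamental groupoid, the two sides differing by a chain homotopy whose top component vanishes. (Do pin down the signs: with the convention $f\Phi - g\Phi = e s + s\pi$ and $E_\psi(v,p) = (v-\psi(p),p)$ the $\iota s_n$-terms add rather than cancel, so you land on $E_{-s_n}$; this is a convention issue, not a structural one, but as written the cancellation claim is off by a sign.)

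The genuine gap is the step you yourself flag as decisive: that two parallel morphisms of $(n+1)$-extensions which are chain homotopic via a homotopy with vanishing top component become equal in $\mathsf G(\mathcal Ext^{n+1}_R(U,V))$. Your proposed proof of this --- descending induction on the largest $k$ with $s_k \neq 0$, ``realising the modification associated with each $s_k$ as a single elementary homotopy'' --- does not work as stated. Switching on a single $s_k$ necessarily alters the morphism in the two adjacent degrees $k$ and $k+1$ simultaneously (altering only degree $k$ destroys the chain-map property), so after one step of the induction you again face two parallel morphisms of extensions and exactly the same identification problem; the induction never bottoms out in an actual elementary homotopy. Moreover, an elementary homotopy requires a genuine commutative triangle of morphisms of extensions, and none is manufactured from $s_k$: in general there is no endomorphism $h$ of the source or target with $f\tilde\Phi = (g\tilde\Phi E_{\pm s_n})\circ h$ or $f\tilde\Phi = h\circ(g\tilde\Phi E_{\pm s_n})$, since $s_k$ need not factor through the relevant component maps. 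What is actually needed is an auxiliary $(n+1)$-extension (a suitably truncated mapping cylinder, which remains an extension of $U$ by $V$ precisely because the top homotopy component vanishes) through which both morphisms factor via maps already identified in the groupoid; equivalently, one may invoke the independence of $c_\Phi \circ \mu_{\varphi_{n+1}}$ from the choice of lifting $\Phi$, which is the content of the remarks preceding \cite[Lem.\,4.3]{Sch98} that the present paper explicitly cites. Supplying that construction (or that citation) is what would close your argument.
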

\end{nn}


\section{The $M$-relative center}\label{sec:Mrel}

Fix a unital and associative $K$-algebra $A$. The symbol $\otimes$ will always stand for $\otimes_K$, i.e., the tensor product over the base ring $K$. Let $A^\ev = A \otimes A^\op$ be the \textit{enveloping algebra} of $A$ over $K$, with factorwise multiplication. It is a very well-known fact, that $A$-bimodules with symmetric $K$-action bijectively correspond to left modules over $A^\ev$. In this section, we will recall the definition of Hochschild cohomology, and its (higher) structures, as they were introduced in \cite{CaEi56}, \cite{Ge63} and \cite{Ho45}.

\subsection{Reminder on Hochschild cohomology}
Let $M$ be a $A^\ev$-module. The \textit{Hochschild cocomplex} $\mathbb C(A,M) = (C^\ast(A,M), \partial_M)$ with coefficients in $M$ is the cocomplex concentrated in non-negative degrees which is given by
$$
C^n(A,M) = \Hom_K(A^{\otimes n},M) \quad \text{(for $n \geqslant 0$)}
$$
and $\partial_M^n : C^n(A,M) \rightarrow C^{n+1}(A,M)$,
\begin{align*}
\partial_M^n(f)(a_1 \otimes \cdots a_{n+1}) = & a_1 f(a_2 \otimes \cdots \otimes a_{n+1})\\ &+ \sum_{i=1}^{n}(-1)^n f(a_1 \otimes \cdots \otimes a_{i-1} \otimes a_i a_{i+1} \otimes a_{i+1} \otimes \cdots \otimes a_{n+1})\\
&+ (-1)^{n+1}f(a_1 \otimes \cdots \otimes a_{n})a_{n+1}.
\end{align*}
The Hochschild cocomplex admits an exterior pairing, in that there is a map
$$
\smallsmile \,: C^m(A,M) \times C^n(A,N) \longrightarrow C^{m+n}(A,M \otimes_A N)
$$
for any pair $M, N \in \Mod(A^\ev)$. Namely,
$$
(f \smallsmile g)(a_1 \otimes \cdots a_{m + n}) = f(a_1 \otimes \cdots \otimes a_{m}) \otimes g(a_{m+1} \otimes \cdots \otimes a_{m+n}).
$$
In fact, through $\smallsmile$, and after identifying $A \otimes_A A \cong A$, $M \otimes_A A \cong M \cong A \otimes_A M$, the DG module $\mathbb C(A,A)$ is a DG $K$-algebra, and $\mathbb C(A,M)$ will be a left and a right DG module over it. In particular, the cohomology of $\mathbb C(A,A)$ will be a graded $K$-algebra.

\begin{defn}
For an integer $n \geqslant 0$, the \textit{$n$-th Hochschild cohomology module} with coefficients in $M$ is given by
$$
\HH^n(A,M) = H^n \mathbb C(A,M).
$$
The graded module
$$
\HH^\ast(A,M) = \bigoplus_{n \geqslant 0}\HH^n(A,M)
$$
is the \textit{Hochschild cohomology module} of $A$ with coefficients in $M$. We abbriviate $\HH^n(A) = \HH^n(A,A)$ for $n \geqslant 0$ and call $\HH^\ast(A)$ the \textit{Hochschild cohomology algebra} of $A$.
\end{defn}

\begin{nn}
There are two immediate observations.
\begin{enumerate}[\rm(1)]
\item The module $\HH^0(A,M)$ coincides with $M^A$, where
$$
M^A = \{m \in M \mid am = ma \text{ for all $a \in A$}\}.
$$
In particular, $\HH^0(A) = Z(A)$ is the center of $A$.
\item The module $\HH^1(A,M)$ coincides with the module $\Out_K(A,M)$ of \textit{outer derivations} which is given by
$$
\Out_K(A,M) = \frac{\Der_K(A,M)}{\Inn_K(A,M)},
$$
where $\Der_K(A,M)$ denotes the derivations of $M$, and $\Inn_K(A,M)$ the submodule of inner derivations.
\end{enumerate}
\end{nn}

\begin{nn}
The \textit{bar resolution} $\mathbb BA = (B_\ast, \beta_\ast)$ of $A$, is the following exact resolution of $A$ by $A^\ev$-modules. To begin with, $B_n = A^{\otimes(n+2)}$ is the $(n+2)$-fold tensor product of $A$ with itself (over $K$ and for $n \geqslant 0$). The $A^\ev$-linear map $\beta_{n+1}: B_{n+1} \rightarrow B_n$,
$$
\beta_{n+1}(a_0 \otimes \cdots \otimes a_{n+2}) = \sum_{i=0}^{n+1}(-1)^i a_0 \otimes \cdots \otimes a_i a_{i+1} \otimes \cdots \otimes a_{n+2},
$$
turns $\mathbb BA = (B_\ast, \beta_\ast)$ into a complex, which is acyclic in all degrees but in degree $0$, wherein its homology is isomorphic to $A$. The multiplication map $\mu: A \otimes A \rightarrow A$ provides a suitable augmentation $\mathbb BA \rightarrow A \rightarrow 0$.

Now, the adjunction isomorphism
$$
\Hom_{A^\ev}(A^{\otimes(n+2)}, M) \longrightarrow \Hom_K(A^{\otimes n}, M), \ \varphi \mapsto \varphi(1 \otimes - \otimes \cdots \otimes - \otimes 1)
$$
is compatible with the differentials $\partial_M$ and $\Hom_{A^\ev}(\mathbb \beta_\ast, M)$, that is, the cocomplexes $\mathbb C(A,M)$ and $\Hom_{A^\ev}(\mathbb BA, M)$ are isomorphic. Thus $\HH^\ast(A,M) \cong H^\ast \Hom_{A^\ev}(\mathbb BA, M)$ and, since $\mathbb BA \rightarrow A \rightarrow 0$ is exact, there is a graded map
$$
\chi_{M}^\ast : \HH^\ast(A,M) \longrightarrow \Ext^\ast_{A^\ev}(A,M).
$$
It is given by sending a cocycle $\varphi \in \Ker \Hom_{A^\ev}(\beta_{n+1}, M)$ to the equivalence class defined by the lower sequence in the pushout diagram below.
$$
\xymatrix@C=20pt{
\cdots \ar[r]^-{\beta_{n+1}} & A^{\otimes (n + 2)} \ar[r]^-{\beta_{n}} \ar[d]_-\varphi & A^{\otimes (n + 1)} \ar[r]^-{\beta_{n-1}} \ar[d] & \cdots \ar[r]^-{\beta_2} & A^{\otimes 3} \ar[r]^-{\beta_1} \ar@{=}[d] & A \otimes A \ar[r]^-{\beta_0} \ar@{=}[d] & A \ar[r] \ar@{=}[d] & 0\\
0 \ar[r] & M \ar[r] & Q \ar[r] & \cdots \ar[r]^-{\beta_2} & A^{\otimes 3} \ar[r]^-{\beta_1} & A \otimes A \ar[r]^-{\beta_0} & A \ar[r] & 0
}
$$
This map respects the graded ring structures if $M = A$, but will in general not be a bijection (as we will remark later, $\HH^\ast(A)$ is always \textit{graded commutative}, whereas a sufficient criterion for $\Ext^\ast_{A^\ev}(A,A)$ being graded commutative is $\Tor_i^K(A,A) = 0$ for all $i > 0$; see \cite[Sec.\,2.2]{BuFl08} and \cite{SnSo04}). However, one easily checks that $\mathbb BA$ will be a resolution by projective $A^\ev$-modules, if $A$ is projective over $K$. Thus the above map is going to be an ismorphism,
$$
\chi^\ast_M : \HH^\ast(A,M) \xrightarrow{\ \sim \ } \Ext^\ast_{A^\ev}(A,M),
$$
if $A$ is $K$-projective.
\end{nn}

\subsection{The Gerstenhaber bracket in Hochschild cohomology} Let us first recall the definition of a Gerstenhaber algebra over $K$.

\begin{defn}\label{def:galgebra}
Let $G = \bigoplus_{n \in \Z}{G^n}$ be a graded $K$-algebra. Further, let $\{-,-\}: G \times G \rightarrow G$ be a $K$-bilinear map of degree $-1$ (that is, $[a,b] \in G^{\abs{a}+\abs{b}-1}$ for all homogeneous $a,b \in G$). The pair $(G,\{-,-\})$ is a \textit{Gerstenhaber algebra} over $K$ if
\begin{enumerate}[\rm(G1)]
\item $G$ is graded commutative, i.e., $ab = (-1)^{\abs{a}\abs{b}}ba$ for all homogeneous $a,b \in G$;
\item $\{a,b\} = -(-1)^{(\abs{a}-1)(\abs{b}-1)}\{b,a\}$ for all homogeneous $a,b \in G$;
\item $\{a,a\} = 0$ for all homogeneous $a \in G$ of odd degree;
\item $\{\{a,a\},a\} = 0$ for all homogeneous $a \in G$ of even degree;
\item\label{def:galgebra:5} the graded Jacobi identity holds:
$$
\{a,\{b,c\}\} = \{\{a,b\},c\} + (-1)^{(\abs{a}-1)(\abs{b}-1)}\{b,\{a,c\}\}
$$
for all homogeneous $a,b,c \in G$;
\item\label{def:galgebra:6} the graded Poisson identity holds:
$$
\{a,bc\} = \{a,b\}c + (-1)^{(\abs{a}-1)\abs{b}} b\{a,c\}
$$
for all homogeneous $a,b,c \in G$.
\end{enumerate}
\end{defn}

\begin{nn}
The map $\{-,-\}$ is called a \textit{Gerstenhaber bracket} for $G$. Note that any graded commutative $K$-algebra can be viewed as a Gerstenhaber algebra over $K$ with trivial bracket.

The axioms (G\ref{def:galgebra:5}) and (G\ref{def:galgebra:6}) may be read as follows: The graded Jacobi identity measures the failure of $\{-,-\}$ from being associative, whereas the graded Poisson identity translates to $\{a,-\}$ being a graded derivation of $G$ of degree $\abs{a} - 1$ (for $a \in G$ homogeneous).
\end{nn}

\begin{nn}\label{nn:Gbracket}
Let $M$ be a fixed $A^\ev$-module and $f \in C^m(A,M)$, $g \in C^n(A,A)$ for integers $m, n \geqslant 0$. For $i=1, \dots, m$, let $f \bullet_i g \in C^{m + n -1}(A,M)$
\begin{align*}
&\qquad(f \bullet_i g)(a_1 \otimes \cdots \otimes a_{m+n-1})\\ = f(a_1 \otimes \cdots & \otimes a_{i-1} \otimes g(a_i \otimes \cdots \otimes a_{i+n-1}) \otimes a_{i+n} \otimes \cdots \otimes a_{m+n-1}).
\end{align*}
Denote by $f \bullet g \in C^{m+n-1}(A,M)$ the alternating sum of the $f \bullet_i g$:
\begin{align*}
f \bullet g = \sum_{i=1}^m (-1)^{(i-1)(n-1)}f \bullet_i g.
\end{align*}
The product $\bullet$ is, in general, non-unital and highly non-associative. However, the external pairing $\smallsmile$ and $\bullet$ are related by the following fundamental formula:
\begin{equation}\tag{$\dagger$}\label{eq:fundform}
\partial_M(f \bullet g) + (-1)^n \partial_M(f) \bullet g = f \bullet \partial_A(g) + (-1)^{n}[g \smallsmile f - (-1)^{mn}f \smallsmile g].
\end{equation}
See \cite[Thm.\,3]{Ge63} for a proof.
\end{nn}

\begin{nn}
The fundamental formula (\ref{eq:fundform}) yields two important insights.
\begin{enumerate}[\rm(1)]
\item If $M = A$, and $f$ and $g$ are \textit{cocycles} (i.e., $\partial_A(f) = 0 = \partial_A(g)$), then $f \smallsmile g = (-1)^{mn} g \smallsmile f$. It follows that $\HH^\ast(A)$ is a graded commutative $K$-algebra.
\item The map
\begin{align*}
\{-,-&\} : C^m(A,A) \times C^n(A,A) \longrightarrow C^{m+n-1}(A,A)\\
&\{f,g\} = f \bullet g - (-1)^{(m-1)(n-1)}g \bullet f
\end{align*}
induces a well-defined map
$$
\{-,-\}: \HH^m(A) \times \HH^n(A) \longrightarrow \HH^{m+n-1}(A).
$$
\end{enumerate}
It is the main observation of \cite{Ge63} that the hereby obtained triple $(\HH^\ast(A), \smallsmile, \{-,-\})$ is a Gerstenhaber algebra over $K$, in the sense of Definition \ref{def:galgebra}.
\end{nn}

\subsection{The action of the $M$-relative center} For this entire subsection, we fix a $K$-algebra $A$ and an $A^\ev$-module $M$. We make the following crucial definition, inspired by \cite{Si67} and \cite[Chap.\,XI]{St75}.

\begin{defn}
The \textit{$M$-relative center} of $A$ is given by
$$
Z_M(A) = \{ a \in Z(A) \mid am = ma \text{ for all $m \in M$} \}.
$$
It is a sub-$K$-algebra of $Z(A)$, and equality holds, if $M = A$.
\end{defn}

\begin{exa}
If the $K$-algebra $A$ is commutative, every $A$-module $M$ may be regarded as an $A^\ev$-module. As such, $Z_M(A) = A$. In fact, an $A^\ev$-module $N$ arises from an $A$-module in such a way if, and only if, $Z_N(A) = A$.
\end{exa}

We get the following obvious criterion as to when the relative center agrees with the whole center of the underlying algebra.

\begin{lem}\label{lem:MrelcentHH}
Let $M$ be an $A^\ev$-module. Then $Z(A) = Z_M(A)$ holds true if, and only if, $\HH^0(Z(A),M) = M^{Z(A)} = M$, which is if, and only if, $\Inn_K(Z(A), M) = 0$.\qed
\end{lem}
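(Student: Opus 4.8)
The plan is to show that each of the three displayed conditions is equivalent to the single assertion $(\ast)$ that $zm = mz$ for all $z \in Z(A)$ and all $m \in M$. Here $M$ is regarded as a module over $Z(A)^\ev = Z(A) \otimes Z(A)$ (note that $Z(A)$ is commutative, so $Z(A)^\op = Z(A)$) by restriction of scalars along the inclusion $Z(A) \hookrightarrow A$; this is precisely the structure for which $\HH^0(Z(A), M)$ and $\Inn_K(Z(A), M)$ are defined. The point worth emphasising is that, although every $z \in Z(A)$ is central in $A$, its left and right actions on the bimodule $M$ need not agree, so $(\ast)$ is a genuine restriction on $M$ rather than an automatic one.

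First I would treat the condition $Z(A) = Z_M(A)$. By definition $Z_M(A) = \{z \in Z(A) \mid zm = mz \text{ for all } m \in M\}$, so the inclusion $Z_M(A) \subseteq Z(A)$ always holds. Equality therefore holds if and only if every $z \in Z(A)$ lies in $Z_M(A)$, which is exactly $(\ast)$.

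Next, for the middle condition, I would invoke the general identification $\HH^0(B, N) = N^B$ recorded earlier in this section (with $B = Z(A)$ and $N = M$), giving $\HH^0(Z(A), M) = M^{Z(A)} = \{m \in M \mid zm = mz \text{ for all } z \in Z(A)\}$. As $M^{Z(A)} \subseteq M$ always, the equality $M^{Z(A)} = M$ holds if and only if every $m \in M$ satisfies $zm = mz$ for all $z \in Z(A)$, again exactly $(\ast)$. Finally, for the third condition, the inner derivation attached to $m \in M$ is the map $D_m \colon Z(A) \to M$, $z \mapsto zm - mz$, and $\Inn_K(Z(A), M)$ is the image of the assignment $m \mapsto D_m$. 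Hence $\Inn_K(Z(A), M) = 0$ if and only if $D_m = 0$ for every $m \in M$, i.e.\ $zm - mz = 0$ for all $z \in Z(A)$ and all $m \in M$, which is once more $(\ast)$. Chaining these three equivalences through $(\ast)$ completes the argument.

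There is no substantial obstacle here: the entire statement is a matter of unwinding definitions and observing that the three conditions all encode the same equality of the left and right $Z(A)$-actions on $M$. The only point demanding genuine care is the bookkeeping described in the first paragraph, namely viewing $M$ correctly as a $Z(A)$-bimodule and recognising that centrality of $z$ in $A$ does not by itself force $zm = mz$ inside $M$.
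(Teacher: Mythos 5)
Your proof is correct and is exactly the argument the paper intends: the lemma is stated with an immediate \qed precisely because all three conditions unwind to the single assertion that $zm = mz$ for every $z \in Z(A)$ and $m \in M$, which is what you verify. Your remark that centrality of $z$ in $A$ does not force $zm = mz$ in $M$, and the care with the restricted $Z(A)$-bimodule structure, are the right points to flag; nothing further is needed.
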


\begin{lem}\label{lem:Mrelcent}
The following statements are equivalent.
\begin{enumerate}[\rm(1)]
\item\label{lem:Mrelcent:1} $Z(A) = Z_{A \otimes A}(A)$.
\item\label{lem:Mrelcent:2} $Z(A) = Z_M(A)$ for all $M \in \Mod(A^\ev)$.
\item\label{lem:Mrelcent:3} $(\Mod(Z(A)^\ev), \otimes_{Z(A)}, Z(A))$ is braided monoidal.
\item\label{lem:Mrelcent:4} The multiplication map $Z(A) \otimes Z(A) \rightarrow Z(A)$ is an isomorphism.
\item\label{lem:Mrelcent:5} The unit $K \rightarrow Z(A)$ is an epimorphism in the category of rings.
\end{enumerate}
\end{lem}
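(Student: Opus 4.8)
Throughout write $C = Z(A)$, and recall that in this paper $\otimes = \otimes_K$, so that $C^\ev = C \otimes C$ since $C$ is commutative. The plan is to split the five conditions into two clusters, $\{(1),(2)\}$ and $\{(3),(4),(5)\}$, each of which is cheap internally, and then to bridge them. First I would unwind the meaning of (1). Viewing $A \otimes A$ as the free bimodule with outer action $b(x \otimes y)c = bx \otimes yc$, an element $a \in C$ lies in $Z_{A \otimes A}(A)$ iff $ax \otimes y = x \otimes ya$ for all $x,y$; testing $x = y = 1$ and then multiplying the resulting relation by an arbitrary $x \otimes y$ inside the ring $A \otimes A$ shows this is equivalent to the single identity $a \otimes 1 = 1 \otimes a$ in $A \otimes A$. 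Hence (1) says exactly that $a \otimes 1 = 1 \otimes a$ in $A \otimes A$ for every $a \in C$. Read inside $A^\ev$, this identity equates left and right multiplication by $a$ as elements of $A^\ev$; since these act as the left and right $C$-actions on any bimodule, (1) forces $am = ma$ for all $m \in M$ and all $M \in \Mod(A^\ev)$, which is (2), while (2) specialised to $M = A \otimes A$ returns (1). This settles (1) $\Leftrightarrow$ (2).

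For the second cluster, (4) $\Leftrightarrow$ (5) is the classical characterisation of ring epimorphisms: $K \to C$ is an epimorphism precisely when the multiplication $\mu \colon C \otimes C \to C$ is bijective (this is where I would cite a standard reference). For (3) $\Leftrightarrow$ (4) I would apply the explicit braiding criterion recorded in Example~\ref{exa:monoidal} to $B = C$: a braiding on $(\Mod(C^\ev), \otimes_C, C)$ amounts to an invertible $\mathbf r = \mathbf r_1 \otimes \mathbf r_2 \otimes \mathbf r_3 \in C \otimes C \otimes C$ with $\mathbf r_1 \otimes a\mathbf r_2 \otimes \mathbf r_3 = \mathbf r_1 \otimes \mathbf r_2 \otimes \mathbf r_3 a$ and $\mathbf r_1 \mathbf r_2 \otimes \mathbf r_3 = 1 \otimes 1$. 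Multiplying the first relation out in its first two tensor slots (using commutativity of $C$) and substituting the second relation collapses it to $a \otimes 1 = 1 \otimes a$ in $C \otimes C$ for every $a \in C$, whence $\Ker(\mu) = 0$ and (4) holds. Conversely, if (4) holds then $C^\ev = C \otimes C \cong C$, so $\Mod(C^\ev) \simeq \Mod(C)$ with tensor $\otimes_C$, which is symmetric and a fortiori braided monoidal; this is (3) (alternatively one just checks $\mathbf r = 1 \otimes 1 \otimes 1$).

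It remains to bridge the clusters. The easy direction is (4) $\Rightarrow$ (1): if $a \otimes 1 = 1 \otimes a$ already holds in $C \otimes C$, then applying the ring homomorphism $\iota \otimes \iota \colon C \otimes C \to A \otimes A$ induced by $\iota \colon C \hookrightarrow A$ transports it into $A \otimes A$. The delicate implication is (1) $\Rightarrow$ (4). Starting from (1), I would compute inside the ring $A \otimes A$ that for all $a,b \in C$,
$a \otimes b = (a \otimes 1)(1 \otimes b) = (1 \otimes a)(b \otimes 1) = ab \otimes 1 = 1 \otimes ab$,
so that the image of $C \otimes C$ in $A \otimes A$ is carried onto the image of $C \otimes 1$; equivalently $\Ker(\mu) \subseteq \Ker(\iota \otimes \iota)$.

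To conclude $\Ker(\mu) = 0$ one must push the identity $a \otimes b = ab \otimes 1$ back from $A \otimes A$ into $C \otimes C$, i.e.\ one needs $\iota \otimes \iota$ to be injective on $\Ker(\mu)$; once this holds, $C \otimes C = C \otimes 1$ and $\mu$ is bijective, giving (4) and hence the full equivalence. \emph{This descent is the main obstacle}: the condition (1) is a priori only an equality in $A \otimes A$, and without extra hypotheses $\iota \otimes \iota$ can have a kernel, so (1) alone cannot see inside $C \otimes C$. The cleanest sufficient input is a $K$-linear retraction $\phi \colon A \to C$ with $\phi \circ \iota = \id_C$ (equivalently, $C$ a $K$-module direct summand of $A$): then $\phi \otimes \phi$ splits $\iota \otimes \iota$, the identity descends, and (1) $\Rightarrow$ (4) follows. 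Such a splitting is automatic when $K$ is a field, and more generally one expects to invoke the running flatness/projectivity of $A$ over $K$ to guarantee that $\iota \otimes \iota$ stays injective; pinning down exactly which flatness hypothesis is needed here is the step I would scrutinise most carefully.
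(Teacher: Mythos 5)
Your decomposition into the clusters $\{(1),(2)\}$ and $\{(3),(4),(5)\}$ is exactly the paper's, and all of the implications you actually establish are correct and coincide with the paper's argument: (1)$\Leftrightarrow$(2) because $a\otimes 1=1\otimes a$ as elements of $A^\ev$ forces $am=ma$ on every $A^\ev$-module; (4)$\Leftrightarrow$(5) is Silver's characterisation of ring epimorphisms; (3)$\Leftrightarrow$(4) via the $\mathbf r$-criterion of Example 2.10(2) together with \cite[Prop.\,3.3]{AgCaMi12}; and (4)$\Rightarrow$(1) by pushing the relation forward along $\iota\otimes\iota\colon Z(A)\otimes_K Z(A)\to A\otimes_K A$. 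The one implication you do not prove, (1)$\Rightarrow$(4), is a genuine gap in your proposal: condition (1) only gives $a\otimes 1=1\otimes a$ in $A\otimes_K A$, whereas (4) needs it in $Z(A)\otimes_K Z(A)$, and as you observe, $\iota\otimes\iota$ need not be injective over a general base ring $K$ (your reduction showing $\Ker(\mu)\subseteq\Ker(\iota\otimes\iota)$ is correct but points in the unhelpful direction). Since (2) only ever tests modules over $A^\ev$, and $Z(A)\otimes_K Z(A)$ is not one of these, no choice of coefficient bimodule lets you see inside $Z(A)\otimes_K Z(A)$ directly; so without an input such as a $K$-linear retraction $A\to Z(A)$, or $K$ a field, the bridge is not established by your argument.

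You should be aware, however, that the paper's own proof does not close this gap either. It reads: ``Lastly, $a\otimes 1=1\otimes a$ for all $a\in Z(A)$ is equivalent to the restricted multiplication map $\mu\colon Z(A)\otimes Z(A)\to Z(A)$ being an isomorphism, as $\Ker(\mu)$ is generated by $a\otimes 1-1\otimes a$'' --- but those generators live in $Z(A)\otimes_K Z(A)$, while condition (1) supplies the relation only in $A\otimes_K A$; the same silent change of ambient ring occurs when it is asserted that $\mathbf r=1\otimes 1\otimes 1\in Z(A)^{\otimes 3}$ ``evidently'' satisfies the braiding equations. The implications (3)$\Rightarrow$(1), (4)$\Rightarrow$(1), (5)$\Rightarrow$(1) are unproblematic, but the converse bridge from the cluster $\{(1),(2)\}$ to $\{(3),(4),(5)\}$ is justified only when $\iota\otimes\iota$ is injective on $\Ker(\mu)$ (e.g.\ when $K$ is a field or $Z(A)$ is a $K$-module direct summand of $A$). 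So the obstacle you isolate is real: your proposal is incomplete, but at precisely the point where the paper's proof is also incomplete, and flagging it explicitly is the right thing to do rather than asserting the descent.
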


\begin{proof}
The implication (\ref{lem:Mrelcent:2})$\, \Longrightarrow \,$(\ref{lem:Mrelcent:1}) is trivial. $Z(A) = Z_{A \otimes A}(A)$ means that $a \otimes 1 = 1 \otimes a$ for all $a \in Z(A)$. Therefore, if $M$ is any $A^\ev$-module, we have $am = (a \otimes 1)m = (1 \otimes a)m = ma$ for all $a \in Z(A)$ and all $m \in M$. Thus the first two items are equivalent.

If $a \otimes 1 = 1 \otimes a$ for all $a \in A$, then $\mathbf r = 1 \otimes 1 \otimes 1 \in Z(A) \otimes Z(A) \otimes Z(A)$ evidently satisfies the equations mentioned in Example \ref{exa:monoidal}(\ref{exa:monoidal:1}), that is, $(\Mod(Z(A)^\ev), \otimes_{Z(A)}, Z(A))$ is braided monoidal. Conversly, if the monoidal category $(\Mod(Z(A)^\ev), \otimes_{Z(A)}, Z(A))$ is braided monoidal, the element $\mathbf r = 1 \otimes 1 \otimes 1$ solves the equations in Example \ref{exa:monoidal}(\ref{exa:monoidal:1}) for the algebra $Z(A)$ (see \cite[Prop.\,3.3]{AgCaMi12}). From the first of these equations, it immediately follows that $a \otimes 1 = 1 \otimes a$, hence $(\ref{lem:Mrelcent:1}) \, \Longleftrightarrow \, (\ref{lem:Mrelcent:3})$.

Lastly, $a \otimes 1 = 1 \otimes a$ for all $a \in Z(A)$ is equivalent to the restricted multiplication map $\mu \colon Z(A)\otimes Z(A) \rightarrow Z(A)$ being an isomorphism, as $\Ker(\mu)$ is generated by $a \otimes 1 - 1 \otimes a$; cf.\,Lemma \ref{lem:omegagen}. On the other hand, $\mu : Z(A) \otimes Z(A) \rightarrow Z(A)$ is an isomorphism if, and only if, the unit map $K \rightarrow Z(A)$ is an epimorphism in the category of rings; see \cite[Prop.\,1.1]{Si67}. Thus, $(\ref{lem:Mrelcent:1}) \, \Longleftrightarrow \, (\ref{lem:Mrelcent:4}) \, \Longleftrightarrow \, (\ref{lem:Mrelcent:5})$.
\end{proof}

\begin{nn}
In order to describe the desired action, we need a map
$$
[-,-]_M : \HH^n(A,M) \times Z_M(A) \longrightarrow \HH^{n-1}(A,M) \quad \text{(for $n \geqslant 0$)}.
$$
Let $f \in \Hom_K(A^{\otimes n}, M)$ be a $K$-linear homomorphism. For $z \in Z_M(A)$ and $1 \leqslant i \leqslant n$, we obtain a $K$-linear homomorphism $A^{\otimes(n-1)} \rightarrow M$ by putting
$$
(f \bullet_i z)(a_1 \otimes \cdots \otimes a_n) = f(a_1 \otimes \cdots a_{i-1} \otimes z \otimes a_{i+1} \otimes \cdots \otimes a_n) 
$$
for $a_1, \dots, a_n \in A$, thus simply copying the formula in \ref{nn:Gbracket}. By taking the alternating sum,
$$
f \bullet z = \sum_{i=1}^n{(-1)^{(i-1)}f \bullet_i z},
$$
we arrive at a map $A^{\otimes(n-1)} \rightarrow M$ only depending on $f$ and $z$. Although he never stated it in this way, we attribute the following result to Gerstenhaber, since it entirely bases on the ideas and techniques which can be found in \cite{Ge63}. An idea of the proof will be provided for the convenience of the reader.

Recall beforehand, that a \textit{right Gerstenhaber module} over a Gerstenhaber $K$-algebra $(G, \{-,-\})$ is a graded $G$-$G$-bimodule $U$ along with a $K$-bilinear map
$$
\langle -,-\rangle : U^m \times G^n \longrightarrow U^{m + n - 1} \quad \text{(for $m, n \in \mathbb Z$)},
$$
such that for $u \in U$, $a, b \in G$ homogeneous, $a u = (-1)^{\abs{a}\abs{u}}ua$,
$$
\langle u ,\{a,b\}\rangle = \langle \langle u, a \rangle, b \rangle - (-1)^{(\abs{a}-1)(\abs{b}-1)} \langle \langle u,b\rangle,a\rangle \quad
$$
and
$$
\langle u , ab\rangle = \langle u, a\rangle b + (-1)^{(\abs{u}-1)\abs{a}}a \langle u, b\rangle .
$$
The latter equality means, that $\langle u, -\rangle : G \rightarrow U$ is a graded derivation of degree $\abs{u}-1$.
\end{nn}

\begin{thm}
With notation as above, the assignment
$$
(f,z) \mapsto f \bullet z = \sum_{i=1}^n{(-1)^{(i-1)}f \bullet_i z}
$$
induces a well-defined $K$-bilinear map
$$
[-,-] = [-,-]_M : \HH^n(A,M) \times Z_M(A) \longrightarrow \HH^{n-1}(A,M)
$$
turning $\HH^\ast(A,M)$ into a right Gerstenhaber module over the Gerstenhaber algebra $Z_M(A)$ $($concentrated in degree $0$ with trivial bracket$)$, that is,
$$
0 = [[\alpha, z], z'] + [[\alpha, z'],z]
$$
and
$$
[\alpha, zz'] = [\alpha, z] z' + z[\alpha, z']
$$
for $\alpha \in \HH^\ast(A,M)$ homogeneous and $z,z' \in Z_M(A))$.
\end{thm}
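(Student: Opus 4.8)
The plan is to verify everything at the level of the Hochschild cochain complex $\mathbb C(A,M)$ and then descend to cohomology, reading off the two displayed identities as the right-Gerstenhaber-module axioms recalled in \ref{nn:Gbracket}, specialised to $G = Z_M(A)$ sitting in degree $0$ with trivial bracket. First I would establish that $(f,z) \mapsto f \bullet z$ descends to cohomology. The essential input is the fundamental formula (\ref{eq:fundform}) applied to $g = z \in C^0(A,A) = A$, so that $n = 0$. Here $\partial_A(z) = 0$ because $z \in Z(A)$ is central, and, unravelling the exterior pairing, $z \smallsmile f$ is left multiplication of the values of $f$ by $z$ while $f \smallsmile z$ is right multiplication by $z$; since $z \in Z_M(A)$ satisfies $zm = mz$ for all $m \in M$, these two cochains coincide. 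Hence (\ref{eq:fundform}) collapses to
$$
\partial_M(f \bullet z) = -\,\partial_M(f) \bullet z .
$$
This single identity does all the work: if $\partial_M(f) = 0$ then $f \bullet z$ is again a cocycle, and if $f = \partial_M(h)$ then $f \bullet z = -\partial_M(h \bullet z)$ is a coboundary. Thus $[-,-]_M$ is well defined on $\HH^\ast(A,M) \times Z_M(A)$, and $K$-bilinearity is immediate from the explicit formula $f \bullet z = \sum_{i}(-1)^{i-1} f \bullet_i z$, which is visibly $K$-linear in each slot.

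Next I would record the $G$-$G$-bimodule structure. For $z \in Z_M(A)$, left and right cup multiplication $L_z, R_z$ on $\mathbb C(A,M)$ are chain maps (a short computation using centrality of $z$ in $A$), and $L_z = R_z$ on cochains precisely because $z \in Z_M(A)$; this furnishes the central bimodule action $z\alpha = \alpha z$ required in \ref{nn:Gbracket}. For the bracket axiom, note that $\{z,z'\} = 0$ in $Z_M(A)$ for degree reasons, so the axiom reduces to $0 = [[\alpha,z],z'] + [[\alpha,z'],z]$. I would deduce this already at the cochain level from Gerstenhaber's right pre-Lie identity for $\bullet$ (see \cite{Ge63}): for $0$-cochains one has $z \bullet z' = 0 = z' \bullet z$ (there are no slots to insert into) and the sign $(-1)^{(0-1)(0-1)} = -1$, so the identity degenerates to $(f \bullet z)\bullet z' + (f \bullet z')\bullet z = 0$.

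Finally, the Poisson identity $[\alpha, zz'] = [\alpha,z]z' + z[\alpha,z']$ is, I expect, the only genuinely delicate point, and the main obstacle. It cannot hold on the nose at the cochain level — inserting the single constant $zz'$ into one slot of $f$ is not the same cochain as inserting $z$ (resp.\ $z'$) and multiplying the value by $z'$ (resp.\ $z$) — so it must be derived in cohomology, using the cocycle condition $\partial_M f = 0$. The mechanism is that $-\bullet z$ is a derivation of $\smallsmile$ only up to an explicit homotopy, governed by the two-variable brace $f\{z,z'\}$: the homotopy-Gerstenhaber compatibility between $\bullet$ and $\smallsmile$ (again \cite{Ge63}, together with $\partial_A z = \partial_A z' = 0$) yields a formula of the shape
$$
f \bullet (z \smallsmile z') - (f \bullet z)\smallsmile z' - z \smallsmile (f \bullet z') = \pm\,\partial_M\big(f\{z,z'\}\big) \pm (\partial_M f)\{z,z'\} .
$$
When $f$ is a cocycle the right-hand side is a coboundary, and identifying $z \smallsmile z' = zz'$, $(f\bullet z)\smallsmile z' = [\alpha,z]z'$ and $z \smallsmile (f \bullet z') = z[\alpha,z']$ (using $L_z = R_z$ from $z,z' \in Z_M(A)$) gives the Leibniz identity in $\HH^{\ast}(A,M)$. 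The degree-$1$ case is a transparent sanity check: there $f \bullet z = f(z)$, and the cocycle condition is exactly the derivation rule $f(zz') = f(z)z' + zf(z')$. Pinning down the homotopy with the correct signs is the delicate part; an alternative, if one prefers to avoid the brace calculus, is to reproduce Gerstenhaber's cochain proof of the Poisson identity (G\ref{def:galgebra:6}) verbatim with coefficients in $M$, observing that every boundary cup-product term that arises has the form $z \smallsmile(-) - (-)\smallsmile z$ and hence vanishes by $z \in Z_M(A)$.
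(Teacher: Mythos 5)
Your proposal is correct and follows essentially the same route as the paper: well-definedness via the fundamental formula (\ref{eq:fundform}) specialised to $g=z$ (using $\partial_A(z)=0$ and $z\smallsmile f=f\smallsmile z$ for $z\in Z_M(A)$), and the two module axioms obtained from Gerstenhaber's pre-Lie identity and his up-to-coboundary Leibniz rule, i.e.\ \cite[Thm.\,2(ii)]{Ge63} and \cite[Thm.\,5]{Ge63}, which is exactly what the paper cites. Your write-up merely unpacks those citations in more detail (the strict cochain-level cancellation $(f\bullet z)\bullet z'+(f\bullet z')\bullet z=0$, and the homotopy governing the Poisson identity), and correctly identifies that the Leibniz rule is the only point requiring the cocycle condition.
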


\begin{proof}
Let $z \in Z_M(A)$. By the fundamental formula (\ref{eq:fundform}), we have
$$
\partial_M(f \bullet z) = f \smallsmile z - z \smallsmile f = 0
$$
for every cocycle $f \in C^n(A,M)$, that is, we obtain a well-defined map as claimed. It is evident that the external pairing $\smallsmile$ turns $\HH^\ast(A,M)$ into a $Z_M(A)$-$Z_M(A)$-bimodule with $z \alpha = \alpha z$ for all $\alpha \in \HH^\ast(A,M)$ homogeneous. The formulas $0 = [[\alpha, z], z'] + [[\alpha, z'],z]$ and $[\alpha, zz'] = [\alpha, z] z' + (-1)^{\abs \alpha} z[\alpha, z']$ follow from \cite[Thm.\,2(ii)]{Ge63} and \cite[Thm.\,5]{Ge63}.
\end{proof}

\begin{cor}\label{cor:bialg}
Let $\Gamma$ be a bialgebra over $K$ which is is projective as a $K$-module. Let $M$ a $\Gamma$-module, and let $\OH^\ast(\Gamma, M) = \Ext^\ast_\Gamma(K,M)$ be the corresponding graded cohomology module with coefficients in $M$. Then $\OH^\ast(\Gamma,M)$ is a right Gerstenhaber module over the Gerstenhaber algebra $Z_{M \otimes \Gamma}(\Gamma)$. In particular, the cohomology ring $\OH^\ast(\Gamma, K)$ of $\Gamma$ is a right Gerstenhaber module over $Z(\Gamma)$.
\end{cor}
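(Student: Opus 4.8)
The plan is to reduce the statement to the theorem preceding this corollary by recognising $\OH^\ast(\Gamma,M)$ as a genuine Hochschild cohomology module. Write $N$ for the $K$-module underlying $M$, made into a $\Gamma$-bimodule by retaining the given left action and letting $\Gamma$ act on the right through the counit, $m\cdot\gamma=\varepsilon(\gamma)m$. Since $\Gamma$ is $K$-projective, the bar resolution of the trivial module $K$ is a resolution by projective $\Gamma$-modules, so it computes $\Ext^\ast_\Gamma(K,M)$; applying $\Hom_\Gamma(-,M)$ and the standard adjunction identifies the resulting cochain complex with $\Hom_K(\Gamma^{\otimes\bullet},N)$. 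The bimodule structure of $N$ enters the Hochschild differential $\partial_N$ only through its last face, and there the right action via $\varepsilon$ reproduces verbatim the final term of the bar differential. Thus the two complexes coincide \emph{on the nose}, and I obtain a natural isomorphism $\OH^\ast(\Gamma,M)\cong\HH^\ast(\Gamma,N)$.

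The point of insisting on an isomorphism of complexes, rather than a mere quasi-isomorphism, is that the operation $f\mapsto f\bullet z$ of Section~\ref{sec:Mrel} is defined cochainwise, by inserting $z$ into an argument, by the identical formula in both pictures. Consequently the right Gerstenhaber module structure that the Theorem places on $\HH^\ast(\Gamma,N)$ transports without any further checking to $\OH^\ast(\Gamma,M)$. Invoking the Theorem for the algebra $\Gamma$ and the coefficient bimodule $N$ therefore makes $\HH^\ast(\Gamma,N)$, and hence $\OH^\ast(\Gamma,M)$, a right Gerstenhaber module over the Gerstenhaber algebra $Z_N(\Gamma)$ sitting in degree $0$ with trivial bracket.

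It remains to recognise the acting algebra. Unwinding $Z_N(\Gamma)$, a central $z\in Z(\Gamma)$ lies in it exactly when it acts on $M$ as the scalar $\varepsilon(z)$, i.e. $zm=\varepsilon(z)m$ for every $m\in M$. The relative centre $Z_{M\otimes\Gamma}(\Gamma)$ named in the statement—computed from $M\otimes\Gamma$ with diagonal left action (Example~\ref{exa:monoidal}) and regular right action—is contained in this set (apply $\mathrm{id}\otimes\varepsilon$ to its defining relation), so the asserted structure follows by restricting the action along $Z_{M\otimes\Gamma}(\Gamma)\hookrightarrow Z_N(\Gamma)$; equivalently, one may run the argument with coefficients $M\otimes\Gamma$ and cut out $\OH^\ast(\Gamma,M)$ as the natural $Z_{M\otimes\Gamma}(\Gamma)$-summand determined by the $\Gamma$-linear splitting $m\mapsto m\otimes 1$, $m\otimes x\mapsto\varepsilon(x)m$. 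For the final clause one specialises to $M=K$ with its $\varepsilon$-action: the condition $zm=\varepsilon(z)m$ is then vacuous, both relative centres collapse to $Z(\Gamma)$, and $\OH^\ast(\Gamma,K)=\Ext^\ast_\Gamma(K,K)$ becomes a right Gerstenhaber module over $Z(\Gamma)$.

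The step I expect to require the most care is the very first identification: one must choose the correct bimodule lift of $M$, since the tempting alternative of putting the diagonal structure on $M\otimes\Gamma$ and hoping for $\HH^\ast(\Gamma,M\otimes\Gamma)\cong\Ext^\ast_\Gamma(K,M)$ fails outright—for $M=K$ it returns $\HH^\ast(\Gamma,\Gamma)$, not $\OH^\ast(\Gamma,K)$. If instead one argues through resolutions so as to land on $Z_{M\otimes\Gamma}(\Gamma)$ directly, the delicate input is that $\Gamma\otimes\mathbb P K$ is a $\Gamma^\ev$-projective resolution of $\Gamma$—this is precisely where the comultiplication and the $K$-projectivity of $\Gamma$ are used—together with the verification that the splitting above is compatible with the insertion action $\bullet z$.
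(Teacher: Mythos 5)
Your proof is correct, but it follows a genuinely different route from the paper's. The paper embeds $\OH^\ast(\Gamma,M)$ into $\HH^\ast(\Gamma,M\otimes\Gamma)$ via the map $r$ sending an extension $S$ of $K$ by $M$ to the bimodule extension $S\otimes\Gamma$, constructs a retraction $s$ by representing a class on the bar resolution and applying $-\otimes_\Gamma K$ (this is where the $K$-projectivity of $\Gamma$ enters, to guarantee the relevant sequences split as right $\Gamma$-modules), and then defines the action by $\langle\alpha,z\rangle=s([r(\alpha),z])$, so that everything happens inside the bimodule setting where $[-,-]_{M\otimes\Gamma}$ is already available. You instead identify the cocomplex computing $\Ext^\ast_\Gamma(K,M)$ with the Hochschild cocomplex $\mathbb C(\Gamma,N)$ on the nose, where $N$ is $M$ with right $\Gamma$-action through $\varepsilon$, invoke the Theorem directly to obtain a right Gerstenhaber module over $Z_N(\Gamma)$, and restrict along the inclusion $Z_{M\otimes\Gamma}(\Gamma)\subseteq Z_N(\Gamma)$, which you correctly verify by applying $\id_M\otimes\varepsilon$ to the defining relation. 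Your argument is more economical and in fact produces the structure over the a priori larger algebra $Z_N(\Gamma)=\{z\in Z(\Gamma)\mid zm=\varepsilon(z)m \text{ for all } m\in M\}$, while the paper's argument lands on $Z_{M\otimes\Gamma}(\Gamma)$ intrinsically and stays within the monoidal--extension-theoretic framework used for the rest of the article. One small caveat: your parenthetical alternative of cutting $\OH^\ast(\Gamma,M)$ out of $\HH^\ast(\Gamma,M\otimes\Gamma)$ via the ``splitting'' $m\mapsto m\otimes 1$, $m\otimes x\mapsto\varepsilon(x)m$ is not a splitting by bimodule maps ($m\mapsto m\otimes 1$ fails to be $\Gamma^\ev$-linear for the diagonal left action), which is why the paper defines $s$ on extensions by $-\otimes_\Gamma K$ rather than by such a section; since this is offered only as an aside, it does not affect your main argument.
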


\begin{proof}
In the following, we will view $M \otimes N$ for $M, N \in \Mod(\Gamma)$ as a left $\Gamma$-module through $\Delta: \Gamma \rightarrow \Gamma \otimes \Gamma$. The cohomology module $\OH^\ast(\Gamma, M)$ embeds into $\HH^\ast(\Gamma, M \otimes \Gamma)$ thanks to the map $r : \Ext^n_\Gamma(K,M) \rightarrow \Ext^n_{\Gamma^\ev}(\Gamma, M \otimes \Gamma)$ taking an $n$-extension $S$ of $K$ by $M$ to the bimodule $n$-extension $S \otimes \Gamma$ of $\Gamma$ by $M \otimes \Gamma$. The map $r$ splits, and a left inverse is given by the following map $s: \Ext^n_{\Gamma^\ev}(\Gamma, \Gamma \otimes M) \rightarrow \Ext^n_\Gamma(K,M)$. Assume that an element $\alpha'$ in $\Ext^n_{\Gamma^\ev}(\Gamma, M \otimes \Gamma)$ is represented by a cocycle $\varphi : \Gamma^{\otimes (n+2)} \rightarrow M \otimes \Gamma$ of $\Hom_{\Gamma^\ev}(\mathbb B\Gamma, M \otimes \Gamma)$. By considering the obvious pushout diagram involving $\varphi$ and the bar resolution's $n$-th differential $\beta_n$, we obtain an $n$-extension
$$
S_\varphi \quad \equiv \quad 0 \longrightarrow M \otimes \Gamma \longrightarrow Q \longrightarrow \Gamma^{\otimes n} \longrightarrow \cdots \longrightarrow \Gamma \otimes \Gamma \longrightarrow \Gamma \longrightarrow 0\,.
$$
As a sequence of right $\Gamma$-modules, this sequence splits, in that each degree is the direct sum of the kernel and the image of the bordering homomorphisms. This follows from the fact that, since $\Gamma$ is $K$-projective, $\Gamma^{\otimes m}$ is $\Gamma$-projective for all $m \geqslant 1$. Thus $S_\varphi \otimes_\Gamma K$ is an $n$-extension of $K$ by $M$, and we let $s(\alpha')$ be its equivalence class in $\Ext^n_\Gamma(K,M)$. 

After all, it now easily follows that $\OH^\ast(\Gamma, M)$ is a right Gerstenhaber module over $Z_{M \otimes \Gamma}(\Gamma)$ through $\langle \alpha, z \rangle = s ([ r(\alpha), z ])$, $\alpha \in \OH^\ast(\Gamma,M)$, $z \in Z_{M \otimes \Gamma}(\Gamma)$, since $s$ satisfies $s(\alpha' \alpha '') = s(\alpha')s(\alpha'')$ for all $\alpha' \in \HH^\ast(\Gamma, M \otimes \Gamma)$, $\alpha'' \in \HH^\ast(\Gamma)$.
\end{proof}

\begin{nn}
Observe that $[-,-]$ will be
$$
\{-,-\}: \HH^n(A) \times \HH^0(A) \longrightarrow \HH^{n-1}(A) \quad \text{(for $n \geqslant 0$)}
$$
if $M = A$, that is, the Gerstenhaber bracket on $\HH^\ast(A)$ in degrees $(n, 0)$ described in the previous subsection. The bracket $[-,-]$ is of fundamental importance, especially in the mentioned case $M = A$.
\end{nn}

\begin{thm}\label{lem:isoHHandOH}
Let $\Gamma$ be a Hopf algebra over $K$. If $\Gamma$ is commutative, quasi-triangular $($that is, the triple $(\Mod(\Gamma), \otimes, K)$ is braided monoidal$)$ and finitely generated projective as a $K$-module, then
\begin{equation}\tag{$\triangle$}\label{eq:lem:isoHHandOH}
\HH^\ast(\Gamma) \cong \OH^\ast(\Gamma, K) \otimes \Gamma
\end{equation}
as graded algebras. Under this isomorphism, the Gerstenhaber bracket $\{-,-\}$ on $\HH^\ast(\Gamma)$ takes the form
$$
\{\alpha \otimes y, \beta \otimes z\} = \{ \alpha \otimes 1, 1 \otimes z\} \cdot (\beta \otimes y) + (-1)^{\abs{\beta}}(\alpha \otimes z) \cdot \{ \beta \otimes 1, 1 \otimes y\} \, .
$$
In particular, the bracket $\{-,-\}$ on $\HH^\ast(\Gamma)$ is completely determined by the map $[-,-] : \HH^\ast(\Gamma) \times \Gamma \rightarrow \HH^{\ast-1}(\Gamma)$.
\end{thm}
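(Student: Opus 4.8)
The plan is to establish the graded-algebra isomorphism $(\triangle)$ first and then extract the bracket formula from the biderivation property of $\{-,-\}$, reducing the whole statement to a single vanishing claim.

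For $(\triangle)$: since $\Gamma$ is $K$-projective we may compute $\HH^\ast(\Gamma) = \Ext^\ast_{\Gamma^\ev}(\Gamma,\Gamma)$, and since $\Gamma$ is commutative we have $\Gamma^\ev = \Gamma\otimes\Gamma$ with the multiplication $m\colon\Gamma\otimes\Gamma\to\Gamma$ a $K$-algebra map. The Hopf structure supplies the \emph{translation} automorphism $\theta\colon\Gamma\otimes\Gamma\to\Gamma\otimes\Gamma$, $a\otimes b\mapsto\sum ab_{(1)}\otimes b_{(2)}$, whose inverse is built from the antipode; commutativity of $\Gamma$ is exactly what makes $\theta$ multiplicative. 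One checks that $\theta$ carries $\Ker(m)$ onto $\Gamma\otimes I_\varepsilon$, with $I_\varepsilon=\Ker(\varepsilon)$, so that restriction along $\theta$ identifies the $\Gamma^\ev$-module $\Gamma$ with $\Gamma\otimes K$ (free over the first factor, trivial over the second). A Künneth argument---valid because $\Gamma$ is finitely generated projective over $K$, so an external tensor product of projective resolutions resolves and $\Hom$ commutes with the relevant tensor products---then gives
$$
\HH^\ast(\Gamma) \cong \Ext^\ast_{\Gamma\otimes\Gamma}(\Gamma\otimes K,\Gamma\otimes K) \cong \Ext^\ast_\Gamma(\Gamma,\Gamma)\otimes\Ext^\ast_\Gamma(K,K) \cong \Gamma\otimes\OH^\ast(\Gamma,K),
$$
the left factor collapsing to $\Gamma$ in degree $0$. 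As $\theta$ and the Künneth map respect Yoneda products, this is an algebra isomorphism, which after reordering the factors is $(\triangle)$.

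Writing $\bar\alpha=\alpha\otimes 1$ and $\hat y=1\otimes y$, every class is a sum of products $\bar\alpha\,\hat y$, so by bilinearity it suffices to compute $\{\bar\alpha\,\hat y,\bar\beta\,\hat z\}$. Since $\{-,-\}$ is a graded biderivation---axiom (G\ref{def:galgebra:6}) and its left-hand analogue obtained via (G2)---I would expand this in both slots into the four brackets $\{\bar\alpha,\bar\beta\}$, $\{\bar\alpha,\hat z\}$, $\{\hat y,\bar\beta\}$, $\{\hat y,\hat z\}$ times the remaining degree-$0$ factors. Two vanish: $\{\hat y,\hat z\}$ lands in $\HH^{-1}(\Gamma)=0$, and the key claim is $\{\bar\alpha,\bar\beta\}=0$. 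Granting it, graded commutativity of $\HH^\ast(\Gamma)$---which lets $\hat y,\hat z$ pass freely through everything and lets $\bar\beta$ commute past $\{\bar\alpha,\hat z\}$ at a sign $(-1)^{(|\alpha|-1)|\beta|}$ that cancels the Leibniz sign---rearranges the two survivors into
$$
\{\bar\alpha\,\hat y,\bar\beta\,\hat z\} = \{\bar\alpha,\hat z\}\,\bar\beta\,\hat y + (-1)^{|\beta|}\bar\alpha\,\hat z\,\{\bar\beta,\hat y\},
$$
using $\{\hat y,\bar\beta\}=(-1)^{|\beta|}\{\bar\beta,\hat y\}$ from (G2); this is the asserted formula. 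The final ``in particular'' is then immediate, since every bracket on the right is of the form $\{\bar\gamma,\hat w\}$, i.e. a value of $\HH^{|\gamma|}(\Gamma)\times\HH^0(\Gamma)\to\HH^{|\gamma|-1}(\Gamma)$ with a degree-$0$ second argument, which is exactly $[-,-]\colon\HH^\ast(\Gamma)\times\Gamma\to\HH^{\ast-1}(\Gamma)$ under $\HH^0(\Gamma)=Z(\Gamma)=\Gamma$.

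The main obstacle is the vanishing $\{\bar\alpha,\bar\beta\}=0$; specialising the target formula to $y=z=1$ (and using $\{-,\mathbbm 1\}=0$) shows it is in fact equivalent to the full statement. Here quasi-triangularity enters. Because $\Gamma$ is commutative, $\Gamma\otimes\Gamma$ is commutative, so the axiom $\Delta^\op(a)=R\,\Delta(a)\,R^{-1}$ collapses to $\Delta^\op=\Delta$: a commutative quasi-triangular Hopf algebra is automatically cocommutative. Hence the adjoint action $a\cdot b=\sum a_{(1)}bS(a_{(2)})=\varepsilon(a)b$ is trivial (again by commutativity)---the conceptual reason the coefficient module above is $\Gamma$ with trivial action---and the flip $M\otimes N\to N\otimes M$ is $\Gamma$-linear, so $(\Mod(\Gamma),\otimes,K)$ is \emph{symmetric} monoidal. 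The subalgebra $\alpha\mapsto\bar\alpha$ is the image of the Ext-algebra of the tensor unit $K$ under the almost strong monoidal functor $\Mod(\Gamma)\to\Mod(\Gamma^\ev)$, $N\mapsto\Gamma\otimes N$ with the $\Delta$-twisted action (which sends the unit $K$ to the unit $\Gamma$). I would then invoke the monoidal-functoriality of the loop construction behind Cor.~\ref{cor:funccompat} (going back to \cite{He14b}) to identify $\{\bar\alpha,\bar\beta\}$ with the image of the corresponding self-bracket of $K$ in $\Mod(\Gamma)$; as that category is symmetric, the comparison loop joining $\alpha\#\beta$ and $\beta\#\alpha$ reads identically in both directions and is therefore null-homotopic, so the self-bracket vanishes. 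The delicate points I expect to fight are making this functor and its comparison morphisms explicit at the level of extension categories, and verifying that it is the \emph{symmetry} of $\Mod(\Gamma)$, not merely a braiding, that trivialises Schwede's loop---the step that genuinely uses cocommutativity rather than the bare existence of an $R$-matrix.
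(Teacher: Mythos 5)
Your proposal is correct in outline, but you should know that the paper does not actually prove this theorem: its proof consists of two citations, to \cite{Li00} for the isomorphism ($\triangle$) and to \cite[Cor.\,6.4.7]{He14b} for the bracket formula, so what you have written is in effect a reconstruction of those references. Your reduction of the bracket formula to the single vanishing claim $\{\alpha\otimes 1,\beta\otimes 1\}=0$ via the two Leibniz rules is correct -- with $m=\abs{\alpha}$, $n=\abs{\beta}$ the term $(-1)^{(m-1)n}\,(\beta\otimes 1)\{\alpha\otimes 1,1\otimes z\}(1\otimes y)$ picks up $(-1)^{n(m-1)}$ when $\beta\otimes 1$ is commuted past the degree-$(m-1)$ class, so the signs cancel as you claim -- and the observation that the asserted formula specialised at $y=z=1$ is exactly this vanishing is a genuine consistency check. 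Your route to the vanishing (commutative plus quasi-triangular forces cocommutative, hence $(\Mod(\Gamma),\otimes,K)$ carries the flip symmetry; the subalgebra $\alpha\mapsto\alpha\otimes 1$ is the image of $\Ext^\ast_\Gamma(K,K)$ under the exact almost strong monoidal functor $N\mapsto N\otimes\Gamma$; functoriality of the loop bracket plus its vanishing over a braided base) is precisely the mechanism the paper deploys elsewhere, cf.\ Proposition \ref{prop:functcomp}, Corollary \ref{cor:bialg} and Corollary \ref{cor:classcomm}; note that the last of these shows a braiding already suffices for the vanishing in degrees $(m,n)$ with $m,n\geqslant 1$, so your concern that the full symmetry rather than a mere braiding is needed is unnecessary caution. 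The two points where your sketch leans hardest on \cite{He14b} without supplying details are (i) the agreement of the loop-theoretic bracket with Gerstenhaber's in degrees $m,n\geqslant 1$ over $\Mod(\Gamma^\ev)$, and (ii) the compatibility of the Linckelmann/K\"unneth identification with the map $\alpha\mapsto\alpha\otimes 1$ induced by $N\mapsto N\otimes\Gamma$; neither is a gap, since both are established in the cited thesis, but they are the substantive content hidden behind the paper's one-line proof.
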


\begin{proof}
The ismorphism (\ref{eq:lem:isoHHandOH}) of graded $K$-algebras has been obtained in \cite{Li00}. That the bracket $\{-,-\}$ on $\HH^\ast(\Gamma)$ takes desired form under it, has been varified in \cite[Cor.\,6.4.7]{He14b}. 
\end{proof}

\begin{rem}
If $G$ is a finite abelian group, and if $K$ is a field, the Gerstenhaber bracket on $\HH^\ast(KG)$ has been computed in the recent article \cite{LeZh13}. The computations yield a very nice illustration of the above theorem.
\end{rem}

\begin{nn}
The map $[-,-]$ for $M = A$ also plays a role in Poisson geometry. Following \cite{Xu94}, a \textit{Poisson structure} for an algebra $A$ over a field $K$ of characteristic different from $2$ is an element $\Pi \in \HH^2(A)$ with $\{\Pi,\Pi\} = 0$. As a simple application of Poisson structures, we provide the following result, without proof, and refer to \cite{Xu94} for further reading.
\end{nn}

\begin{thm}
Let $K$ be a field with $\mathrm{char}(K) \neq 2$. Let $A$ be a $K$-algebra and $\Pi \in \HH^2(A)$ be a Poisson structure for $A$. Then the map
$$
\{-,-\}_\Pi : Z(A) \times Z(A) \longrightarrow Z(A),
$$
defined by $\{z,z'\}_\Pi = [[\Pi, z],z'] = \{\{\Pi, z\},z'\}$, is a Poisson bracket for $Z(A)$.
\end{thm}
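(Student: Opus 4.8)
The plan is to verify the three defining properties of a Poisson bracket on the commutative $K$-algebra $Z(A) = \HH^0(A)$: $K$-bilinearity together with antisymmetry, the Leibniz rule, and the Jacobi identity. Bilinearity is immediate from the $K$-bilinearity of the Gerstenhaber bracket, and well-definedness is clear, since $z \in \HH^0(A)$ forces $\{\Pi,z\} \in \HH^1(A)$ and hence $\{\{\Pi,z\},z'\} \in \HH^0(A) = Z(A)$. Throughout I would abbreviate $d = \{\Pi,-\} \colon \HH^\ast(A) \to \HH^{\ast+1}(A)$, so that by definition $\{z,z'\}_\Pi = \{dz,z'\}$ for $z,z' \in Z(A)$.

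Before turning to the axioms, I would record three consequences of the Gerstenhaber axioms. First, reading (G5) with $a = \Pi$ shows that $d$ is a graded derivation of the bracket, $d\{b,c\} = \{db,c\} + (-1)^{\abs{b} - 1}\{b,dc\}$. Second, applying (G5) with $a = b = \Pi$ gives $2\{\Pi,\{\Pi,x\}\} = \{\{\Pi,\Pi\},x\}$; since $\{\Pi,\Pi\} = 0$ and $\ch(K) \neq 2$, this yields $d^2 = 0$. Third, for $z,z' \in \HH^0(A)$ the element $\{z,z'\}$ lies in $\HH^{-1}(A) = 0$, so applying the derivation property to $\{z,z'\} = 0$ gives the symmetry relation $\{dz,z'\} = \{z,dz'\}$.

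With these in hand, antisymmetry and the Leibniz rule are short. For antisymmetry, the graded antisymmetry axiom (G2) gives $\{dz,z'\} = -\{z',dz\}$, and the symmetry relation rewrites $\{z',dz\} = \{dz',z\}$, so $\{z,z'\}_\Pi = -\{z',z\}_\Pi$. For the Leibniz rule I would apply the graded Poisson identity (G6) to $a = dz$ (degree $1$) and $b = z'$, $c = z''$ (degree $0$); the sign $(-1)^{(\abs{dz}-1)\abs{z'}}$ is trivial and one reads off $\{z,z'z''\}_\Pi = \{z,z'\}_\Pi z'' + z'\{z,z''\}_\Pi$, where the product is the restriction of $\smallsmile$ to $\HH^0(A) = Z(A)$.

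The crux is the Jacobi identity, and this is where the integrability hypothesis $\{\Pi,\Pi\} = 0$ enters decisively. I would apply the graded Jacobi identity (G5) to the triple $dz, dz', z''$, whose first two entries have degree $1$ so that all occurring signs are trivial, obtaining
$$
\{dz,\{dz',z''\}\} = \{\{dz,dz'\},z''\} + \{dz',\{dz,z''\}\}.
$$
The left-hand side equals $\{z,\{z',z''\}_\Pi\}_\Pi$, and the last summand equals $\{z',\{z,z''\}_\Pi\}_\Pi$, both directly from the definition of $\{-,-\}_\Pi$. For the remaining summand I would use $d^2 = 0$ together with the derivation property to compute $\{dz,dz'\} = d\{dz,z'\} = d\,\{z,z'\}_\Pi$, whence $\{\{dz,dz'\},z''\} = \{\{z,z'\}_\Pi,z''\}_\Pi$. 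Collecting terms yields
$$
\{z,\{z',z''\}_\Pi\}_\Pi = \{\{z,z'\}_\Pi,z''\}_\Pi + \{z',\{z,z''\}_\Pi\}_\Pi,
$$
which is the Jacobi identity in derivation form; combined with the antisymmetry established above it is equivalent to the usual cyclic form. The main obstacle is precisely the reduction $\{dz,dz'\} = d\{z,z'\}_\Pi$: it relies on $d^2 = 0$, and it is exactly here that $\{\Pi,\Pi\} = 0$ and $\ch(K)\neq 2$ are used, while everything else is formal bookkeeping with the Gerstenhaber signs.
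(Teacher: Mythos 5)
Your argument is correct, but there is nothing in the paper to compare it against: the text explicitly states this theorem \emph{without proof} and refers the reader to Xu's article for the general theory of noncommutative Poisson structures. Your verification is the standard formal one and it checks out against the axioms as stated in Definition of a Gerstenhaber algebra: the derivation property of $d = \{\Pi,-\}$ is (G5) with $a = \Pi$, the identity $2d^2 = \{\{\Pi,\Pi\},-\}$ correctly isolates the only two places where $\{\Pi,\Pi\}=0$ and $\ch(K)\neq 2$ are needed, the symmetry relation $\{dz,z'\}=\{z,dz'\}$ follows from $\{z,z'\}\in\HH^{-1}(A)=0$, and the reduction $\{dz,dz'\}=d\{z,z'\}_\Pi$ via $d^2=0$ is exactly the right way to handle the middle term of the Jacobi identity. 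One small point worth making explicit if you write this up: the bracket $\{-,-\}$ in degrees $(n,0)$ is precisely the part of the Gerstenhaber structure that the rest of the paper is devoted to interpreting (it is $[-,-]_A$ in the paper's notation), so the axioms you invoke do apply in degree $0$ at the cochain level; and $\HH^{-1}(A)=0$ holds because Hochschild cohomology is concentrated in non-negative degrees. With those remarks your proof is complete and self-contained, arguably more useful to the reader than the paper's pointer to the literature.
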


\begin{nn}
By the theorem of Hochschild-Kostant-Rosenberg, see \cite{HKR62}, we have
\begin{equation}\tag{$\#$}\label{eq:HKR}
\HH^\ast(A) \cong \Lambda_A^\ast \Der_K(A),
\end{equation}
whenever $A$ is smooth over a field $K$ of characteristic zero. The right hand side carries the structure of a Gerstenhaber algebra via the Schouten-Nijenhuis bracket, see \cite{Gi05}, and the isomorphism above respects this additional structure. It is classical, that for a smooth manifold $M$ over $K = \mathbb R$, the algebra $A = C^\infty(M,\mathbb R)$ of smooth functions is a smooth algebra over $\mathbb R$. As a very well-known fact, Poisson brackets for $A$ are in bijective correspondence with those elements of $\Lambda^2_{A} \Der_{\mathbb R}(A)$ whose Schouten-Nijenhuis bracket against themselves vanishes; that is, Poisson brackets for $A$ bijectively correspond to Poisson structures $\Pi \in \HH^2(A)$ by (\ref{eq:HKR}). See \cite{Gi05} and \cite{Ro09} for clear discussions of the topic.
\end{nn}


\section{The action of the $X$-relative center of a monoidal category}\label{sec:actmonoidal}

\begin{nn}
In degrees $m, n \geqslant 1$, S.\,Schwede offered a categorical interpretation of the Gerstenhaber bracket in Hochschild cohomology; see \cite{Sch98}. We generalized this description to exact monoidal categories in \cite{He14b}. In this section we will include the missing interpretation of the bracket in degrees $(n,0)$ for $n \geqslant 0$. Let $\C$ be a $K$-linear exact tensor category with tensor functor $\otimes = \otimes_\C$ and tensor unit $\bbm1 = \bbm1_\C$. We further assume that $\C$, as an exact category, is closed under kernels of epimorphisms. The basic idea is to construct loops $\Omega_\C(S,f)$ based at $S \# f$ for each choice of an extension $S \in \mathcal Ext^n_\C(\bbm1, \bbm1)$ and a morphism $f \in \Hom_\C(\bbm1, \bbm1)$ which behave well with respect to morphisms of extensions (i.e., if $S \rightarrow S'$ is a morphism in $\mathcal Ext_\C^n(\bbm1,\bbm1)$, then $\Omega_\C(S',f) = c_{f}(\Omega_\C(S,f))$, where $c_{f}$ denotes the conjugation with the morphism $S \circ f \rightarrow S' \circ f$ induced by $S \rightarrow S'$).
\end{nn}

\begin{nn}
Let $f \in \Hom_\C(\bbm1, \bbm1)$ be a morphism and $C$ be an object in $\C$. The unit and counit isomorphisms give rise to morphisms $f^C_\lambda$ and $f^C_\varrho$ in the following way:
$$
f^C_\lambda = \lambda_C \circ (f \otimes C) \circ \lambda_C^{-1}: C \longrightarrow C
$$
and
$$
f^C_\varrho = \varrho_C \circ (C \otimes f) \circ \varrho_C^{-1}: C \longrightarrow C \ .
$$
In general, $f^C_\lambda \neq f^C_\varrho$, however equality holds if $C = \bbm1$. Note that $f_\lambda$ as well as $f_\varrho$ define natural transformations $\Id_\C \rightarrow \Id_\C$.
\end{nn}

\begin{defn}
Let $(\C, \otimes, \bbm1)$ be a monoidal $K$-category, and let $X$ be an object in $X$. For a morphism $f \in \Hom_\C(\bbm1, \bbm1)$, we let $\Delta^X(f) = f^X_\lambda - f^X_\varrho$ be the \textit{$X$-relative defect of $f$}. The set $Z_X(\C) = \{ f \in \Hom_\C(\bbm1, \bbm1) \mid \Delta^X(f) = 0 \}$ is called the \textit{$X$-relative center} of $\C$. The \textit{center} of $\C$ is $Z(\C) = Z_\bbm1(\C) = \Hom_\C(\bbm1, \bbm1)$.
\end{defn}

\begin{nn}
By \cite{Su02}, the center $Z(\C)$ of $\C$ is a commutative $K$-algebra. Clearly, $Z_X(\C)$ is a sub-$K$-algebra of $Z(\C)$ for all objects $X \in \C$. 

\begin{exas}
If $(\C, \otimes, \bbm1)$ is $(\Mod(A^\ev), \otimes_A, A)$ for some $K$-algebra $A$, and $M$ is an $A^\ev$-module, then the algebra homomorphism
$$
\gamma : Z_M(A) \longrightarrow Z_M(\Mod(A^\ev)), \ z \mapsto (a \mapsto za),
$$
is bijective. Thus the terminologies for relative centres agree. Similarly, if $(\C, \otimes, \bbm1)$ is $(\Mod(\Gamma), \otimes, K)$ for some $K$-bialgebra $\Gamma$, and $N$ is a $\Gamma$-module, then $Z_N(\Mod(\Gamma))$ is isomorphic to $K$.
\end{exas}

We have the following obvious properties of the $X$-relative center.

\begin{lem}
Let $X$ and $Y$ be objects in $\C$, and let $X'$ be a direct summand of $X$. The following statements hold true.
\begin{enumerate}[\rm(1)]
\item $Z_{X}(\C) \subseteq Z_{X'}(\C)$.
\item $Z_{X \oplus Y}(\C) = Z_X(\C) \cap Z_Y(\C)$.
\item $Z_X(\C) \cap Z_Y(\C) \subseteq Z_{X \otimes Y}(\C)$.\qed
\end{enumerate}
\end{lem}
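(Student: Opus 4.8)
The plan is to derive all three statements from one structural observation: the assignments $C \mapsto f^C_\lambda$ and $C \mapsto f^C_\varrho$ are natural transformations $\Id_\C \to \Id_\C$ (as recorded right after their definition), so since $\C$ is $K$-linear their difference $C \mapsto \Delta^C := f^C_\lambda - f^C_\varrho$ is natural as well. Thus for every morphism $g \colon C \to D$ in $\C$ we have $\Delta^D \circ g = g \circ \Delta^C$, and membership $f \in Z_C(\C)$ is precisely the vanishing of the endomorphism $\Delta^C$ of $C$. Parts (1) and (2) will use only this naturality together with the splitting data of summands and biproducts, while part (3) will require the coherence axioms.

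For (1) I would write the summand $X'$ of $X$ via a split monomorphism $\iota \colon X' \to X$ and a retraction $\pi \colon X \to X'$ with $\pi \circ \iota = \id_{X'}$. If $f \in Z_X(\C)$, i.e. $\Delta^X = 0$, then naturality gives $\Delta^{X'} = \Delta^{X'} \circ \pi \circ \iota = \pi \circ \Delta^X \circ \iota = 0$, so $f \in Z_{X'}(\C)$. For (2), the inclusion $Z_{X \oplus Y}(\C) \subseteq Z_X(\C) \cap Z_Y(\C)$ is immediate from (1), as $X$ and $Y$ are summands of $X \oplus Y$. For the reverse inclusion I would use the biproduct identity $\id_{X \oplus Y} = \iota_X \circ \pi_X + \iota_Y \circ \pi_Y$: if $\Delta^X = 0 = \Delta^Y$, then $\Delta^{X \oplus Y} \circ \iota_X = \iota_X \circ \Delta^X = 0$ and likewise for $Y$, whence $\Delta^{X \oplus Y} = \Delta^{X \oplus Y} \circ (\iota_X \circ \pi_X + \iota_Y \circ \pi_Y) = 0$.

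Statement (3) is the substantive one and needs the monoidal coherences rather than mere naturality. The aim is to establish the three identities
\[
f^{X \otimes Y}_\lambda = f^X_\lambda \otimes Y, \qquad f^{X \otimes Y}_\varrho = X \otimes f^Y_\varrho, \qquad f^X_\varrho \otimes Y = X \otimes f^Y_\lambda,
\]
valid for every $f \in \Hom_\C(\bbm1,\bbm1)$. The first follows by inserting $\lambda_{X \otimes Y} = (\lambda_X \otimes Y) \circ \alpha_{\bbm1,X,Y}$ from Remark~\ref{rem:opposite}(2) into the definition of $f^{X \otimes Y}_\lambda$ and commuting $f \otimes (X \otimes Y)$ past the associator by naturality of $\alpha$, so that the associators cancel; the second is the mirror computation using $\varrho_{X \otimes Y} \circ \alpha_{X,Y,\bbm1} = X \otimes \varrho_Y$. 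Granting these, I would conclude by the chain
\[
f^{X \otimes Y}_\lambda = f^X_\lambda \otimes Y = f^X_\varrho \otimes Y = X \otimes f^Y_\lambda = X \otimes f^Y_\varrho = f^{X \otimes Y}_\varrho,
\]
in which the second and fourth equalities are exactly the hypotheses $f \in Z_X(\C)$ and $f \in Z_Y(\C)$, and the middle equality is the third identity above; this yields $\Delta^{X \otimes Y} = 0$, i.e. $f \in Z_{X \otimes Y}(\C)$.

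The main obstacle is therefore the third identity $f^X_\varrho \otimes Y = X \otimes f^Y_\lambda$, the point where \emph{right} multiplication on the left tensorand must be matched with \emph{left} multiplication on the right tensorand. I expect to prove it by expanding both sides, rewriting $X \otimes \lambda_Y$ through the triangle axiom $(\varrho_X \otimes Y) \circ \alpha_{X,\bbm1,Y} = X \otimes \lambda_Y$ from the definition of a monoidal category, and then moving $X \otimes (f \otimes Y)$ across $\alpha_{X,\bbm1,Y}$ by naturality of $\alpha$, so that everything collapses to $(\varrho_X \circ (X \otimes f) \circ \varrho_X^{-1}) \otimes Y = f^X_\varrho \otimes Y$. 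This identity holds unconditionally; the hypotheses $\Delta^X = 0$ and $\Delta^Y = 0$ enter only through the displayed chain. The only real care needed is the bookkeeping of associators and unit isomorphisms, which the coherence relations in Remark~\ref{rem:opposite}(2) and the triangle axiom are precisely designed to handle.
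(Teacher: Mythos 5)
Your proof is correct, and since the paper states this lemma with a \qed and no argument (it is presented as an ``obvious'' consequence of the definitions), your write-up supplies exactly the intended justification: naturality of $f_\lambda$ and $f_\varrho$ handles (1) and (2), and the unit coherences $\lambda_{X\otimes Y}=(\lambda_X\otimes Y)\circ\alpha_{\bbm1,X,Y}$, $\varrho_{X\otimes Y}\circ\alpha_{X,Y,\bbm1}=X\otimes\varrho_Y$ together with the triangle axiom give the three identities needed for (3). All steps check out, including the key unconditional identity $f^X_\varrho\otimes Y=X\otimes f^Y_\lambda$ that links the two hypotheses in the final chain.
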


\begin{nn}
Let $n \geqslant 0$ be an integer and let $X$ be an object in $\C$. Let $f \in Z_X(\C)$ be a morphism inside the $X$-relative center of $\C$. Further, let $S$ be an object in $\mathcal Ext^n_\C(\bbm1, X)$. We write $f^X$ for the morphism $f^X_\lambda = f^X_\varrho$.

For $n = 0$, the ``extension'' $S$ is simply a morphism $\bbm1 \rightarrow X$ and we let $\Omega_\C(S,f)$ be the trivial loop at $S \# f = S \circ f$. Assume that $n = 1$ and that $S$ is given by
$$
\xymatrix@C=20pt{0 \ar[r] & X \ar[r]^-{c_{1}} & C \ar[r]^-{c_{0}} & \bbm1 \ar[r] & 0} \ .
$$
Due to naturality, we obtain a morphism $S \rightarrow S$ of complexes:
$$
\xymatrix{
0 \ar[r] & X \ar[d]_-{f^X} \ar[r]^-{c_{1}} & C \ar@<1ex>[d]^-{f^C_\varrho} \ar@<-1ex>[d]_-{f^C_\lambda} \ar[r]^-{c_{0}} & \bbm1 \ar[d]^-f \ar[r] & 0 \ \ \\
0 \ar[r] & X \ar[r]^-{c_{1}} & C \ar[r]^-{c_{0}} & \bbm1 \ar[r] & 0 \ .
}
$$
The universal property of the pushout $X \oplus_X C$ of $f$ and $c_1$ yields the commutative diagram
$$
\xymatrix@R=25pt@C=30pt{
0 \ar[r] & X \ar[d]_-{f^X} \ar[r]^-{c_{1}} & C \ar@<1ex>@/^2.2pc/[dd]^-(0.3){f_\varrho^C} \ar[d] \ar@<-1ex>@/_2.2pc/[dd]_-(0.3){f_\lambda^C} \ar[r]^-{c_{0}} & \bbm1 \ar@{=}[d] \ar[r] & 0 \ \ \\
0 \ar[r] & X \ar@{=}[d] \ar[r]|(0.47){\hole} & X \oplus_X C \ar@<1ex>[d]^-{f'_\varrho} \ar@<-1ex>[d]_-{f'_\lambda} \ar[r]|(0.545){\hole} & \bbm1 \ar[r] \ar[d]^-f & 0 \ \ \\
0 \ar[r] & X \ar[r]^-{c_{1}} & C \ar[r]^-{c_{0}} & \bbm1 \ar[r] & 0 \ ,
}
$$
wherein the sandwiched admissible short exact sequence represents the Yoneda product $S \# f$ of $S$ with $f$. By pulling back $c_0$ and $f$, we obtain the commutative diagram
$$
\xymatrix@R=25pt@C=30pt{
0 \ar[r] & X \ar@{=}[d] \ar[r]^-{} & X \oplus_X C \ar@<1ex>@/^2.2pc/[dd]^-(0.7){f'_\varrho} \ar@<-1ex>@/_2.2pc/[dd]_-(0.7){f'_\lambda} \ar[r]^-{} \ar@<1ex>[d]^-{f''_\varrho} \ar@<-1ex>[d]_-{f''_\lambda} & \bbm1 \ar@{=}[d] \ar[r] & 0 \ \ \\
0 \ar[r] & X \ar@{=}[d] 
\ar[r]|(0.47)\hole & \bbm1 \times_\bbm1 C \ar[d] \ar[d] \ar[r]|(0.545){\hole} & \bbm1 \ar[r] \ar[d]^-f & 0 \ \ \\
0 \ar[r] & X \ar[r]^-{c_{1}} & C \ar[r]^-{c_{0}} & \bbm1 \ar[r] & 0 \ ,
}
$$
and hence a pair $F_\lambda, F_\varrho: S \# f \rightarrow f \# S$ of parallel morphisms in $\mathcal Ext^1_\C(\bbm1, X)$. They define a loop
$$
\xymatrix@C=20pt{S \# f \ar[r]^-{F_\lambda} & f \# S & S \# f \ar[l]_-{F_\varrho}}
$$
based at $S \# f$, that is, the desired loop $\Omega_\C(S,f)$.
\end{nn}

\begin{nn}
Now suppose that $n > 1$. Suppose further that $S$ is given as
$$
\xymatrix@C=22pt{0 \ar[r] & X \ar[r]^-{c_n} & C_{n-1} \ar[r]^-{c_{n-1}} & \cdots \ar[r]^-{c_{1}} & C_0 \ar[r]^-{c_{0}} & \bbm1 \ar[r] & 0} \ .
$$
Consider the morphisms $f_\lambda = f_\lambda^{C_{n-1}} = \lambda_{C_{n-1}} \circ (f \otimes C_{n-1}) \circ \lambda_{C_{n-1}}^{-1}$, $f_\varrho = f_\varrho^{C_{n-1}} = \varrho_{C_{n-1}} \circ (C_{n-1} \otimes f) \circ \varrho_{C_{n-1}}^{-1}$, $\tilde{f}_\lambda = f_\lambda^{C_{0}} = \lambda_{C_0} \circ (f \otimes C_0) \circ \lambda_{C_0}^{-1}$ and $\tilde{f}_\varrho = f_\varrho^{C_{0}} = \varrho_{C_0} \circ (C_0 \otimes f) \circ \varrho_{C_0}^{-1}$. They give rise to the following commutative diagram.
$$
\xymatrix@C=30pt{
0 \ar[r] & X \ar[d]_-{f^X} \ar[r]^-{c_n} & C_{n-1} \ar@<1ex>@/^2.5pc/[dd]^-(0.3){f_\varrho} \ar@<-1ex>@/_2.5pc/[dd]_-(0.3){f_\lambda} \ar[r]^-{c_{n-1}} \ar[d] & \cdots \ar[r]^-{c_{1}} & C_0 \ar[r]^-{c_{0}} \ar@{=}[d] & \bbm1 \ar@{=}[d] \ar[r] & 0\\
0 \ar[r] & X \ar@{=}[d] \ar[r]|(0.47){\hole} & X \oplus_X C_{n-1} \ar@<1ex>[d]^-{f'_\varrho} \ar@<-1ex>[d]_-{f'_\lambda} \ar[r]|(0.525){\hole} & \cdots \ar[r]^-{c_{1}} & C_0 \ar[r]^-{c_{0}} \ar@<1ex>@/^2.5pc/[dd]^-(0.7){\tilde{f}_\varrho} \ar@<-1ex>@/_2.5pc/[dd]_-(0.7){\tilde{f}_\lambda} \ar@<1ex>[d]^-{\tilde{f}'_\varrho} \ar@<-1ex>[d]_-{\tilde{f}'_\lambda} & \bbm1 \ar[r] \ar@{=}[d] & 0\\
0 \ar[r] & X \ar@{=}[d] \ar[r]^-{c_n} & C_{n-1} \ar@{=}[d] \ar[r]^-{c_{n-1}} & \cdots \ar[r]|(0.41){\hole} & \bbm1 \times_\bbm1 C_0 \ar[r]|(0.62){\hole} \ar[d] & \bbm1 \ar[r] \ar[d]^-f & 0\\
0 \ar[r] & X \ar[r]^-{c_n} & C_{n-1} \ar[r]^-{c_{n-1}} & \cdots \ar[r]^-{c_{1}} & C_0 \ar[r]^-{c_{0}} & \bbm1 \ar[r] & 0
}
$$
Yet again, we obtain two parallel morphisms $F_\lambda, F_\varrho: S \# f \rightarrow f \# S$ in $\mathcal Ext^n_\C(\bbm1, X)$ and hence acquire the loop $\Omega_\C(S,f)$ based at $S \# f$.
\end{nn}

\begin{nn}
The above construction yields a map
$$
\langle -,- \rangle = \langle -,- \rangle_X : \Ext^n_\C(\bbm1, X) \times Z_X(\C) \longrightarrow \Ext^{n-1}_\C(\bbm1, X)
$$
by $\langle \alpha, f \rangle = u_\C^{-1} \Omega_\C(S,f)$, where $u_\C$ is the isomorphism described in \ref{nn:uC} (see also Theorem \ref{thm:retakh}) and $S$ is an extension representing $\alpha$. The following is a question we will not be able to answer in general.

\begin{quest}
Does the map
$$
\langle -,- \rangle_X : \Ext^\ast_\C(\bbm1, X) \times Z_X(\C) \longrightarrow \Ext^{\ast - 1}_\C(\bbm1, X)
$$
turn $\Ext^\ast_\C(\bbm1, X)$ into a right Gerstenhaber module over $Z_X(\C)$?
\end{quest}

However, for $\C = \Mod(A^\ev)$ we are going to prove the following main result.
\end{nn}

\begin{thm}\label{thm:mainthm}
Let $A$ be a $K$-algebra and $M$ be an $A^\ev$-module. The following diagram commutes for $n = 0,1$.
$$
\xymatrix{
\HH^n(A,M) \times Z_M(A) \ar[r]^-{[-,-]} \ar[d]_-{\cong} & \HH^{n-1}(A,M) \ar[d]^-{\cong}\\
\Ext^n_{A^\ev}(A,M) \times Z_{M}(\Mod(A^\ev)) \ar[r]^-{\langle -, -\rangle} & \Ext^{n-1}_{A^\ev}(A,M)
}
$$
It also commutes for $n > 1$ provided that $A$ is projective as a $K$-module.
\end{thm}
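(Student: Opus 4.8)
The plan is to specialise the categorical construction of Section~\ref{sec:actmonoidal} to $\C = \Mod(A^\ev)$, $\bbm1 = A$, $X = M$, and then to transport the resulting loop through Schwede's explicit description of the Retakh isomorphism. Under the identification $Z_M(\Mod(A^\ev)) \cong Z_M(A)$, a morphism $f \in \Hom_{A^\ev}(A,A)$ is multiplication by the central element $z = f(1)$, and the endomorphisms $f^M_\lambda$ and $f^M_\varrho$ are precisely left and right multiplication by $z$ on $M$; hence $\Delta^M(f) = 0$ holds if and only if $z \in Z_M(A)$. The two vertical maps in the diagram are the comparison homomorphisms $\chi^n_M$ and $\chi^{n-1}_M$. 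These are isomorphisms in degrees $0$ and $1$ for every $A$, whereas in higher degrees they are isomorphisms exactly when $A$ is $K$-projective, because then $\mathbb BA$ is a projective resolution of $A$ over $A^\ev$. This dichotomy is the source of the two regimes in the statement; moreover, in the higher case it is what permits us to use $\mathbb BA$ as the fixed projective resolution in the formula for $u_{A^\ev} = \mu$.

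First I would dispose of the low degrees. For $n=0$ both brackets land in $\HH^{-1}(A,M) = 0 = \Ext^{-1}_{A^\ev}(A,M)$, and $\Omega_\C(S,f)$ is by definition the trivial loop, so there is nothing to check. For $n=1$ I would argue directly from the two commutative diagrams defining $F_\lambda, F_\varrho \colon S\#f \to f\#S$ in Section~\ref{sec:actmonoidal}: representing $\alpha \in \HH^1(A,M)$ by the class of a short exact sequence $S \colon 0 \to M \to E \to A \to 0$ and applying Lemma~\ref{lem:preimage_mu} to the length-two loop \eqref{eq:loop}, one reads off a cocycle in $C^0(A,M) = M^A$ that equals $\varphi(z)$, where $\varphi \colon A \to M$ is a cocycle representing $\alpha$; this is exactly $\varphi \bullet z = [\alpha,z]$.

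For the generic case $n > 1$ (with $A$ now $K$-projective) I would represent $\alpha$ by a Hochschild cocycle $\varphi \in C^n(A,M)$, equivalently by $\tilde\varphi \in \Hom_{A^\ev}(B_n, M)$, and take $S = S(\tilde\varphi)$, the pushout extension that represents $\chi^n_M(\alpha)$ (this is Schwede's extension $S(\varphi)$). After lifting $\id_A$ to a chain map $\Phi \colon \mathbb BA \to (S\#f)^\natural$, the morphisms $F_\lambda, F_\varrho \colon S\#f \to f\#S$ furnished by the construction yield two lifts $F_\lambda\circ\Phi$ and $F_\varrho\circ\Phi$ of $\id_A$ into $(f\#S)^\natural$. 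By Lemma~\ref{lem:preimage_mu}, a null-homotopy between these two lifts has a degree-$(n-1)$ component $s_{n-1}\colon B_{n-1} \to M$ that is a cocycle representing $u_{A^\ev}^{-1}\bigl(\Omega_\C(S,f)\bigr) = \langle \chi^n_M(\alpha), f\rangle$, so it remains to identify its class.

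The heart of the matter, and the step I expect to be the main obstacle, is the explicit identification $[s_{n-1}] = [\varphi\bullet z]$. The key observation is that $F_\lambda$ and $F_\varrho$ are built from the termwise left and right multiplications by $z$ on the modules of $S$; pulled back along $\Phi$ these become the two $A^\ev$-linear chain endomorphisms of $\mathbb BA$ sending $a_0\otimes\cdots\otimes a_{k+1}$ to $za_0\otimes\cdots\otimes a_{k+1}$ and to $a_0\otimes\cdots\otimes a_{k+1}z$, respectively. Both lift the single endomorphism ${}\cdot z$ of $A$, and the canonical homotopy between them is the alternating insertion
$$
h_k(a_0\otimes\cdots\otimes a_{k+1}) = \sum_{i=0}^{k}(-1)^{i+1}\, a_0\otimes\cdots\otimes a_i\otimes z\otimes a_{i+1}\otimes\cdots\otimes a_{k+1}.
$$
Transporting $h$ into $(f\#S)^\natural$ along $\Phi$ produces the required null-homotopy, and pairing its degree-$(n-1)$ part with $\varphi$ through the adjunction $\Hom_{A^\ev}(A^{\otimes(n+1)}, M) \cong C^{n-1}(A,M)$ reproduces $\sum_{i=1}^{n}(-1)^{i-1}\,\varphi\bullet_i z = \varphi\bullet z$. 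The genuine work lies in threading $h$ through the pushouts and pullbacks that define $S\#f$ and $f\#S$, keeping careful track of signs, and invoking $\Delta^M(f) = 0$ to guarantee that $F_\lambda$ and $F_\varrho$ are morphisms of \emph{extensions} with common target $f\#S$ rather than mere morphisms of complexes. Once this is in place, $[s_{n-1}] = [\varphi\bullet z] = \chi^{n-1}_M([\alpha,z])$, and the square commutes.
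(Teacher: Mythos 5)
Your proposal follows essentially the same route as the paper: the $n=0$ case is trivial, the $n=1$ case is handled by reading off the degree-$0$ component of a null-homotopy via Lemma~\ref{lem:preimage_mu} and identifying it with the evaluation $D_S(z)$ of the representing derivation, and for $n>1$ the paper's null-homotopy $s_i$ is exactly your alternating insertion of $z$ composed with the components of $\Phi$ (up to an overall, convention-dependent sign), with the homotopy identity verified through the fundamental formula~(\ref{eq:fundform}). The argument is correct and matches the paper's proof in structure and in the key computation.
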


As we will elaborate on how the behaviour of an exact monoidal category $(\C, \otimes, \bbm1)$ influences the associated maps $\langle -, -\rangle_X$ for $X \in \C$, the following consequences will be imminent.

\begin{cor}\label{cor:funccompat}
Let $A$ and $B$ be $K$-algebras, and assume that $A$ and $B$ are projective over $K$. Let $\mathfrak X: (\Mod(A^\ev), \otimes_A, A) \longrightarrow (\Mod(B^\ev), \otimes_B, B)$ be an exact and almost $($co$)$strong monoidal functor. Then there are induced homomorphisms $\delta_M: Z_M(A) \rightarrow Z_{\mathfrak X M}(B)$ and $\mathfrak X^\ast : \HH^\ast(A,M) \rightarrow \HH^\ast(B,\mathfrak X M)$ which render the following diagram commutative.
$$
\xymatrix@C=30pt{
\HH^n(A,M) \times Z_M(A) \ar[r]^-{[-,-]_M} \ar[d]_-{\mathfrak X^n \times \delta_M} & \HH^{n-1}(A, M) \ar[d]^-{\mathfrak X^{n-1}}\\
\HH^n(B,\mathfrak X M) \times Z_{\mathfrak X M}(B) \ar[r]^-{[-,-]_{\mathfrak X M}} & \HH^{n-1}(B, \mathfrak X M)
}
$$
In particular, if $B \cong \End_{A}(P)^\op$ for some $A$-progenerator $P$ $($that is, $A$ and $B$ are Morita equivalent$)$, and if $N$ is the $B^\ev$-module $\Hom_{A^\ev}(P \otimes \Hom_A(P,A), M)$, then $\HH^\ast(A,M)$ and $\HH^\ast(B, N)$ will be isomorphic as right Gerstenhaber modules over $Z_M(A) \cong Z_{N}(B)$.
\end{cor}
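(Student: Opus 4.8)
The plan is to reduce the assertion to the extension-theoretic picture and then exploit that the loop construction $\Omega_\C(-,-)$ and the Retakh isomorphism $u_\C$ are, by design, assembled only from data that an exact almost (co)strong monoidal functor respects. Throughout I identify $\mathfrak X\bbm1_\C = \mathfrak X A$ with $\bbm1_\D = B$ by means of the invertible unit comparison morphism $\phi_0$ (in the almost strong case) or $\psi_0$ (in the almost costrong case). Since $A$, and hence also $B$, is $K$-projective, the comparison maps $\chi^\ast_M$ of Section \ref{sec:Mrel} are isomorphisms in every degree, so I may work throughout with $\Ext^\ast_{A^\ev}(A,M)$ in place of $\HH^\ast(A,M)$. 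The functor $\mathfrak X^\ast$ is then defined by sending the class of an $n$-extension $S$ of $A$ by $M$ to the class of $\mathfrak X S$, which is an $n$-extension of $\mathfrak X A\cong B$ by $\mathfrak X M$ because $\mathfrak X$ is exact.

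First I would construct $\delta_M$ by conjugating $\mathfrak X(f)\in\Hom_{B^\ev}(\mathfrak X A,\mathfrak X A)$ with the invertible unit comparison, e.g. $\delta_M(f)=\phi_0^{-1}\circ\mathfrak X(f)\circ\phi_0$ in the almost strong case. The technical heart of the argument is the identity
$$
\mathfrak X(f^C_\lambda) = \delta_M(f)^{\mathfrak X C}_\lambda, \qquad \mathfrak X(f^C_\varrho) = \delta_M(f)^{\mathfrak X C}_\varrho
$$
for every object $C$, which I would verify by pasting the two unit-coherence squares of Definition \ref{def:monoidalfunc} (those relating $\lambda_\D\mathfrak X$ and $\varrho_\D\mathfrak X$ to $\mathfrak X\lambda_\C$, $\mathfrak X\varrho_\C$ through $\phi$, respectively $\psi$) onto the defining expressions $f^C_\lambda=\lambda_C\circ(f\otimes_\C C)\circ\lambda_C^{-1}$ and $f^C_\varrho=\varrho_C\circ(C\otimes_\C f)\circ\varrho_C^{-1}$. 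Granting this, applying $\mathfrak X$ to $\Delta^M(f)=f^M_\lambda-f^M_\varrho=0$ gives $\Delta^{\mathfrak X M}(\delta_M(f))=0$, so $\delta_M$ lands in $Z_{\mathfrak X M}(B)$ and restricts to a $K$-algebra homomorphism $Z_M(A)\to Z_{\mathfrak X M}(B)$.

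Next I would transport the loop. Because $\Omega_\C(S,f)$ is built purely from (i) the complex endomorphism of $S$ whose components are $f^X$, $f^{C_i}_\lambda$, $f^{C_i}_\varrho$, (ii) the pushout $A\oplus_A C_{n-1}$ along the admissible monomorphism $c_n$, (iii) the pullback $\bbm1\times_\bbm1 C_0$ along the admissible epimorphism $c_0$, and hence the Yoneda products $S\#f$ and $f\#S$, and (iv) the two parallel morphisms $F_\lambda,F_\varrho$, every ingredient is preserved by $\mathfrak X$: exactness guarantees that $\mathfrak X$ commutes with these pushouts and pullbacks and with $\#$, while the key lemma matches the transported component maps with those produced by $\delta_M(f)$. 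Hence $\mathsf G(\mathfrak X)$ carries $\Omega_\C(S,f)$ to $\Omega_\D(\mathfrak X S,\delta_M(f))$. It remains to check that the Retakh isomorphism is natural, i.e. $\mathsf G(\mathfrak X)\circ u_\C = u_\D\circ\mathfrak X^{n-1}$; this follows by inspecting the construction in \ref{nn:uC}, since $\mathfrak X$ is exact and $K$-linear (hence sends the trivial extension $S_n$ to $\mathfrak X S_n$, respects direct sums and the Baer sum, and preserves the connecting morphism $S\to S_n\boxplus S$ used in the conjugation). Combining both facts yields
$$
\mathfrak X^{n-1}\langle\alpha,f\rangle_M = \mathfrak X^{n-1}u_\C^{-1}\Omega_\C(S,f) = u_\D^{-1}\mathsf G(\mathfrak X)\Omega_\C(S,f) = u_\D^{-1}\Omega_\D(\mathfrak X S,\delta_M(f)) = \langle\mathfrak X^n\alpha,\delta_M(f)\rangle_{\mathfrak X M},
$$
and Theorem \ref{thm:mainthm} translates this identity for $\langle-,-\rangle$ back into the asserted commuting square for $[-,-]_M$.

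Finally, for the Morita statement I would specialise to $\mathfrak X=\Hom_{A^\ev}(P^\ev,-)\colon\Mod(A^\ev)\to\Mod(B^\ev)$, which by Example \ref{exa:monoidalfunc} is an exact almost strong monoidal equivalence, and here $\mathfrak X M=\Hom_{A^\ev}(P\otimes\Hom_A(P,A),M)=N$. As $\mathfrak X$ is an equivalence, both $\mathfrak X^\ast$ and $\delta_M$ are isomorphisms, the commuting square just established shows they intertwine $[-,-]_M$ with $[-,-]_N$, and together with the (already available) compatibility of $\mathfrak X^\ast$ with the cup product this exhibits an isomorphism $\HH^\ast(A,M)\cong\HH^\ast(B,N)$ of right Gerstenhaber modules over $Z_M(A)\cong Z_N(B)$. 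The main obstacle I anticipate is the key lemma: the functor is only \emph{almost} (co)strong, so $\phi$ (resp. $\psi$) need not be invertible, and one must verify that the combinations occurring in $f^C_\lambda$, $f^C_\varrho$ and throughout $\Omega$ call only on the invertible unit datum $\phi_0$ (resp. $\psi_0$), together with the non-invertible $\phi$/$\psi$ in a configuration that still closes the relevant coherence squares.
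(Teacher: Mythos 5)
Your proposal is correct and follows essentially the same route as the paper: the map $\delta_M(f)=\phi_0^{-1}\circ\mathfrak X(f)\circ\phi_0$, the key identities $\mathfrak X(f^C_\lambda)=\delta_M(f)^{\mathfrak X C}_\lambda$ and $\mathfrak X(f^C_\varrho)=\delta_M(f)^{\mathfrak X C}_\varrho$ via the unit-coherence squares, the strict equality $\mathfrak X\,\Omega_\C(S,f)=\Omega_\D(\mathfrak X S,\delta_M(f))$, and the naturality of $u_\C$ (which the paper simply cites from \cite[Lem.\,3.2.6]{He14b}) are exactly the content of Proposition \ref{prop:functcomp}, which the paper then combines with Theorem \ref{thm:mainthm} and Example \ref{exa:monoidalfunc} just as you do. The only cosmetic difference is that you re-derive the naturality of the Retakh isomorphism by inspection rather than citing it.
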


\begin{cor}\label{cor:vanish}
Let $A$ be a $K$-algebra which is projective over $K$. Consider the following statements.
\begin{enumerate}[\rm(1)]
\item\label{cor:vanish:0} $\HH^0(Z(A),M) = M$ for all $M \in \Mod(A^\ev)$.
\item\label{cor:vanish:1} $Z(A) = Z_{M}(A)$ for all $M \in \Mod(A^\ev)$.
\item\label{cor:vanish:2} $Z(A) = Z_{A \otimes A}(A)$.
\item\label{cor:vanish:3} $(\Mod(Z(A)^\ev), \otimes_{Z(A)}, Z(A))$ is braided monoidal.
\item\label{cor:vanish:4} $[-,-]_M$ vanishes for all $M \in \Mod(A^\ev)$.
\end{enumerate}
Then one has the implications
$$
(\ref{cor:vanish:0}) \ \Longleftrightarrow \ (\ref{cor:vanish:1}) \ \Longleftrightarrow \ (\ref{cor:vanish:2}) \ \Longleftrightarrow \ (\ref{cor:vanish:3}) \ \Longrightarrow \ (\ref{cor:vanish:4})
$$
amongst them.
\end{cor}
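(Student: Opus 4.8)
The plan is to reduce the four equivalences to results already in hand and to isolate the single genuinely new implication $(\ref{cor:vanish:1}) \Rightarrow (\ref{cor:vanish:4})$, which I would prove through the extension-theoretic incarnation of the bracket.

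First I would dispose of the equivalences. The statements $(\ref{cor:vanish:1})$, $(\ref{cor:vanish:2})$ and $(\ref{cor:vanish:3})$ are verbatim the statements $(\ref{lem:Mrelcent:2})$, $(\ref{lem:Mrelcent:1})$ and $(\ref{lem:Mrelcent:3})$ of Lemma \ref{lem:Mrelcent}, so the chain $(\ref{cor:vanish:1}) \Leftrightarrow (\ref{cor:vanish:2}) \Leftrightarrow (\ref{cor:vanish:3})$ is immediate from that lemma. For $(\ref{cor:vanish:0}) \Leftrightarrow (\ref{cor:vanish:1})$ I would quote Lemma \ref{lem:MrelcentHH}, which for a fixed $A^\ev$-module $M$ equates $Z(A) = Z_M(A)$ with $\HH^0(Z(A),M) = M$; quantifying over all $M \in \Mod(A^\ev)$ yields the equivalence. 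Neither step uses the $K$-projectivity of $A$.

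It remains to establish $(\ref{cor:vanish:1}) \Rightarrow (\ref{cor:vanish:4})$, and here I would pass to the categorical bracket. Assume $Z(A) = Z_M(A)$ for every $M \in \Mod(A^\ev)$. As $A$ is $K$-projective, Theorem \ref{thm:mainthm} makes the comparison maps $\chi^\ast_M$ into isomorphisms intertwining $[-,-]_M$ with $\langle -,-\rangle_M$; thus it suffices to show $\langle \alpha, f\rangle_M = 0$ for every class $\alpha \in \Ext^n_{A^\ev}(A,M)$ and every $f \in Z_M(\Mod(A^\ev))$. Fix a representing $n$-extension $S \equiv (0 \to M \to C_{n-1} \to \cdots \to C_0 \to A \to 0)$ and write $z = f(1) \in Z(A)$. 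By the construction of Section \ref{sec:actmonoidal}, the loop $\Omega_{\Mod(A^\ev)}(S,f)$ is the pair of parallel morphisms $F_\lambda, F_\varrho \colon S \# f \to f \# S$ obtained, through the universal properties of the pushout $M \oplus_M C_{n-1}$ and the pullback $A \times_A C_0$, from the maps $f^{C_{n-1}}_\lambda, f^{C_{n-1}}_\varrho$ and $f^{C_0}_\lambda, f^{C_0}_\varrho$. These maps are left and right multiplication by $z$ on the bimodule $C_i$, and the hypothesis gives $z \in Z(A) = Z_{C_i}(A)$, so they coincide for $i = 0, n-1$; together with $f^M_\lambda = f^M_\varrho$ (valid because $f \in Z_M(\Mod(A^\ev))$) this forces $F_\lambda = F_\varrho$. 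The loop thus degenerates to the constant loop at $S \# f$, whence $\langle \alpha, f\rangle_M = u_{\Mod(A^\ev)}^{-1}\,\Omega_{\Mod(A^\ev)}(S,f) = 0$. The case $n = 0$ is vacuous since $\HH^{-1}(A,M) = 0$.

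The main obstacle is the claim $F_\lambda = F_\varrho$: one must check that the equalities $f^{C_i}_\lambda = f^{C_i}_\varrho$ propagate through the induced maps $f'_{\lambda/\varrho}$ on the pushout and $\tilde f'_{\lambda/\varrho}$ on the pullback of the loop construction, so that the two morphisms $S \# f \to f \# S$ are genuinely equal and not merely homotopic. This is a diagram chase resting on the uniqueness clauses of the universal properties, together with the observation that both the $\lambda$- and $\varrho$-data restrict to the same map $f^M$ on the submodule $M$; once this is in place, the remaining verifications are purely formal.
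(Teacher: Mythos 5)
Your proposal is correct and follows essentially the same route as the paper: the equivalences are exactly Lemma \ref{lem:MrelcentHH} and Lemma \ref{lem:Mrelcent}, and the implication to the vanishing of $[-,-]_M$ is obtained by transferring to $\langle-,-\rangle_M$ via Theorem \ref{thm:mainthm} and observing that the hypothesis forces $F_\lambda = F_\varrho$, so that the loop $\Omega_{\Mod(A^\ev)}(S,f)$ degenerates --- which is precisely the content of the paper's Lemma \ref{lem:EnC} and Proposition \ref{prop:Xrelcennat}. The only cosmetic remark is that for $n>1$ the equality $f_\lambda^{C_i}=f_\varrho^{C_i}$ is also needed in the intermediate degrees $1\leqslant i\leqslant n-2$, not just at $C_0$ and $C_{n-1}$, but your hypothesis covers all bimodules so this changes nothing.
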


Before turning ourselves to the proofs of the above results, let us mention that if $K$ is a field, $K \rightarrow Z(A)$ being an epimorphism is equivalent to it being an isomorphism, by \cite[Cor.\,1.2]{Si67}, and hence $(\ref{cor:vanish:3}) \, \Longrightarrow \, (\ref{cor:vanish:4})$ is automatic. Moreover, the implication $(\ref{cor:vanish:4}) \, \Longrightarrow \, (\ref{cor:vanish:3})$ will not hold true in general. Indeed, if, for instance, $A = L \supseteq K$ is a separabel field extension, (\ref{cor:vanish:4}) will surely be satisfied (see \cite{Ho45}), whereas $\mu: L \otimes_K L \rightarrow L$ cannot be an isomorphism unless $[L:K] = 1$.


\section{Proofs}\label{sec:proofs}

\subsection{Compatibility results} In what follows, we will assume that our exact and monoidal categories are tensor $K$-categories and closed under kernels of epimorphisms. Let $(\C, \otimes, \bbm1) = (\C, \otimes_\C, \bbm1_\C)$ be such a category. For two objects $W, X$ in $\C$ and an integer $n \geqslant 1$, we let $\mathsf E^n_\C(W,X)$ be the full subcategory of $\C$ obtained as follows: An object $E \in \C$ belongs to $\mathsf E^n_\C(W,X)$ if, and only if, there is an admissible $n$-extension $0 \rightarrow X \rightarrow E_{n-1} \rightarrow \cdots \rightarrow E_0 \rightarrow W \rightarrow 0$ such that $E \cong E_i$ for some $0 \leqslant i \leqslant n-1$. Observe that $\mathsf E^n_\C(W,X) = \C$ for $n \geqslant 3$, since
$$
\xymatrix@C=18pt{
0 \ar[r] & X \ar[r]^-{\mathrm{can}} & X \oplus E \ar[r]^-{\mathrm{can}} & E \ar[r]^-0 & \cdots \ar[r]^0 & W \ar@{=}[r] & W \ar[r] & 0
}
$$
is an admissible $n$-extension for every object $E \in \C$.

\begin{lem}\label{lem:EnC}
For an object $X \in \C$ and an integer $n \geqslant 1$ consider the following statements.
\begin{enumerate}[\rm(1)]
\item\label{lem:EnC:0} $Z_X(\C) \subseteq Z_E(\C)$ for all objects $E$ in $\C$.
\item\label{lem:EnC:1} $Z_X(\C) \subseteq Z_E(\C)$ for all objects $E$ in $\E^n_\C(\bbm1,X)$.
\item\label{lem:EnC:2} The $n$-th component map of $\langle -, -\rangle_X$,
$$
\Ext^n_\C(\bbm1, X) \times Z_X(\C) \longrightarrow \Ext^{n-1}_\C(\bbm1, X),
$$
is constantly zero.
\end{enumerate}
Then the implications $(\ref{lem:EnC:0}) \, \Longrightarrow \, (\ref{lem:EnC:1}) \, \Longrightarrow \, (\ref{lem:EnC:2})$ hold. Further, $(\ref{lem:EnC:2}) \, \Longrightarrow \, (\ref{lem:EnC:1})$ if $n = 1$, and $(\ref{lem:EnC:1}) \, \Longrightarrow \, (\ref{lem:EnC:0})$ if $n > 1$.
\end{lem}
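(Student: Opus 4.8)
The plan is to establish the four implications in turn, observing that only the case $n=1$ of $(\ref{lem:EnC:2}) \Rightarrow (\ref{lem:EnC:1})$ carries real content. The implication $(\ref{lem:EnC:0}) \Rightarrow (\ref{lem:EnC:1})$ is purely formal: $\E^n_\C(\bbm1,X)$ is by construction a full subcategory of $\C$, so a containment asserted for all objects of $\C$ holds a fortiori for those arising in admissible $n$-extensions. For $(\ref{lem:EnC:1}) \Rightarrow (\ref{lem:EnC:2})$ I would unwind the definition of $\Omega_\C(S,f)$. Fixing $f \in Z_X(\C)$ and an admissible $n$-extension $S \colon 0 \to X \to C_{n-1} \to \cdots \to C_0 \to \bbm1 \to 0$ representing $\alpha$, the two morphisms $F_\lambda, F_\varrho \colon S \# f \to f \# S$ are assembled, in the pushout and pullback steps, solely from $f^{C_{n-1}}_\lambda, f^{C_{n-1}}_\varrho$ and $f^{C_0}_\lambda, f^{C_0}_\varrho$. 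Hypothesis $(\ref{lem:EnC:1})$ yields $f \in Z_{C_{n-1}}(\C) \cap Z_{C_0}(\C)$, whence $f^{C_{n-1}}_\lambda = f^{C_{n-1}}_\varrho$ and $f^{C_0}_\lambda = f^{C_0}_\varrho$, so that $F_\lambda = F_\varrho$ and $\Omega_\C(S,f)$ is the constant loop. Thus $\langle \alpha, f\rangle = u_\C^{-1}\Omega_\C(S,f) = 0$, and as $S$ is arbitrary the $n$-th component map vanishes.

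For $(\ref{lem:EnC:1}) \Rightarrow (\ref{lem:EnC:0})$ with $n > 1$, the case $n \geqslant 3$ is immediate since $\E^n_\C(\bbm1,X) = \C$, as recorded before the lemma. For $n = 2$ I would, given an arbitrary object $E$, produce the admissible $2$-extension
\[
0 \longrightarrow X \longrightarrow X \oplus E \longrightarrow E \oplus \bbm1 \longrightarrow \bbm1 \longrightarrow 0
\]
built from the canonical split monomorphism $X \to X \oplus E$, the map $(x,e) \mapsto (e,0)$, and the canonical split epimorphism $E \oplus \bbm1 \to \bbm1$; this exhibits $X \oplus E$ as an object of $\E^2_\C(\bbm1,X)$. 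Hypothesis $(\ref{lem:EnC:1})$ then gives $Z_X(\C) \subseteq Z_{X \oplus E}(\C)$, and since $Z_{X \oplus E}(\C) = Z_X(\C) \cap Z_E(\C)$ by the elementary properties of the relative center established above, every $f \in Z_X(\C)$ lies in $Z_E(\C)$. As $E$ is arbitrary, $(\ref{lem:EnC:0})$ follows.

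The genuine work lies in $(\ref{lem:EnC:2}) \Rightarrow (\ref{lem:EnC:1})$ for $n = 1$, which I expect to be the main obstacle. Fix $C \in \E^1_\C(\bbm1,X)$ together with a short exact sequence $S \colon 0 \to X \xrightarrow{c_1} C \xrightarrow{c_0} \bbm1 \to 0$ and set $\alpha = [S]$. Since $f_\lambda$ and $f_\varrho$ are natural endotransformations of $\Id_\C$, naturality against $c_1$ and $c_0$ combined with $f \in Z_X(\C)$ and $f^\bbm1_\lambda = f^\bbm1_\varrho = f$ gives
\[
\Delta^C(f) \circ c_1 = c_1 \circ \Delta^X(f) = 0 \qquad\text{and}\qquad c_0 \circ \Delta^C(f) = 0 .
\]
Hence $\Delta^C(f)$ annihilates $\Im(c_1)$ and lands in $\Ker(c_0) = \Im(c_1)$; as $c_1$ is an admissible monomorphism and $c_0$ an admissible epimorphism, $\Delta^C(f)$ factors uniquely as $\Delta^C(f) = c_1 \circ h \circ c_0$ for a morphism $h \colon \bbm1 \to X$, and $h = 0$ if and only if $\Delta^C(f) = 0$, i.e.\ if and only if $f \in Z_C(\C)$.

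It therefore remains to identify $\langle \alpha, f\rangle \in \Ext^0_\C(\bbm1,X) = \Hom_\C(\bbm1,X)$ with $\pm h$; this identification is the crux. The morphisms $F_\lambda, F_\varrho \colon S \# f \to f \# S$ are maps of $1$-extensions inducing the identity on $X$ and on $\bbm1$, so their difference is a chain map vanishing on both ends, necessarily null-homotopic, and the null-homotopy is precisely a morphism $\bbm1 \to X$ which, under the normalisation of $u_\C$ from \ref{nn:uC}, represents $u_\C^{-1}\Omega_\C(S,f)$. A diagram chase through the pushout $X \oplus_X C$ and the pullback $\bbm1 \times_\bbm1 C$ then shows that this null-homotopy is exactly the factorisation $h$ of $\Delta^C(f) = f^C_\lambda - f^C_\varrho$. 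Granting this, $(\ref{lem:EnC:2})$ forces $\langle \alpha, f\rangle = 0$, hence $h = 0$ and $f \in Z_C(\C)$; since $C$ was an arbitrary object of $\E^1_\C(\bbm1,X)$, $(\ref{lem:EnC:1})$ follows. The delicate point — and the step I expect to require the most care — is matching the sign and the base-point conventions so that this chain homotopy really computes $\langle\alpha,f\rangle$ rather than a twist of it.
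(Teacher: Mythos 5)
Three of your four implications are correct and essentially follow the paper's route: $(\ref{lem:EnC:0}) \Rightarrow (\ref{lem:EnC:1})$ is indeed formal; $(\ref{lem:EnC:1}) \Rightarrow (\ref{lem:EnC:2})$ works because $F_\lambda = F_\varrho$ forces the loop $\Omega_\C(S,f)$ to be nullhomotopic (one small inaccuracy: for $n \geqslant 3$ the morphisms $F_\lambda, F_\varrho$ are \emph{not} assembled solely from $f^{C_{n-1}}_{\lambda/\varrho}$ and $f^{C_0}_{\lambda/\varrho}$ --- in degrees $1,\dots,n-2$ their components are $f^{C_i}_\lambda$ versus $f^{C_i}_\varrho$ --- but since every $C_i$ lies in $\E^n_\C(\bbm1,X)$ your hypothesis still covers them); and your $n=2$ argument for $(\ref{lem:EnC:1}) \Rightarrow (\ref{lem:EnC:0})$ is the paper's argument up to reading off the containment from the term $X \oplus E$ rather than $\bbm1 \oplus E$, both of which work via $Z_{U \oplus V}(\C) = Z_U(\C) \cap Z_V(\C)$.

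The genuine gap is in $(\ref{lem:EnC:2}) \Rightarrow (\ref{lem:EnC:1})$ for $n=1$, and it is exactly the step you flag and then ``grant'': the identification of $u_\C^{-1}\Omega_\C(S,f)$ with the factorisation $h$ of $\Delta^C(f)$. Your proposed route --- extract a null-homotopy of $F_\lambda - F_\varrho$ and argue it represents $\langle\alpha,f\rangle$ --- leans on Lemma~\ref{lem:preimage_mu}, which the paper only establishes for module categories via projective resolutions; the present lemma lives in an arbitrary exact monoidal $K$-category, where no such cocycle/null-homotopy dictionary is available, and the abstract map $u_\C$ of \ref{nn:uC} is defined by a quite different normalisation (split extensions, Baer sums, conjugation). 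So as written the argument is not complete. The paper avoids the issue entirely: $\mathcal Ext^1_\C(\bbm1,X)$ is a \emph{groupoid} (every morphism of $1$-extensions is invertible), hence $\mathsf G(\mathcal Ext^1_\C(\bbm1,X)) \cong \mathcal Ext^1_\C(\bbm1,X)$, and a length-two loop $S\#f \xrightarrow{F_\lambda} f\#S \xleftarrow{F_\varrho} S\#f$ is homotopically trivial if and only if $F_\lambda = F_\varrho$, which by construction happens if and only if $f^C_\lambda = f^C_\varrho$. This disposes of the implication with no computation of $u_\C^{-1}$, no null-homotopy, and no sign bookkeeping; you should replace your final paragraph by this observation (your factorisation $\Delta^C(f) = c_1 \circ h \circ c_0$ is correct but is not needed).
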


\begin{proof}
The implication $(\ref{lem:EnC:0}) \, \Longrightarrow \, (\ref{lem:EnC:1})$ is trivial. If $f_\lambda^E = f_\varrho^E$ for all $f \in \Hom_\C(\bbm1, \bbm1)$ and $E \in \E^n_\C(\bbm1, X)$, then the morphisms $F_\lambda, F_\varrho: S \# f \rightarrow f \# S$ defining the loop $\Omega_\C(S,f)$ for $S \in \mathcal Ext^n_\C(\bbm1, X)$ agree, and hence $\Omega_\C(S,f)$ will be (homotopically equivalent to) the trivial loop. Thus (\ref{lem:EnC:1}) implies (\ref{lem:EnC:2}).

Let $f \in Z_X(\C)$ and $S$ be an admissible $1$-extension $0 \rightarrow X \rightarrow E \rightarrow \bbm1 \rightarrow 0$. The category $\mathcal Ext^1_\C(\bbm1, X)$ is a groupoid, so $\mathsf G(\mathcal Ext^1_\C(\bbm1, X)) \cong \mathcal Ext^1_\C(\bbm1, X)$. Hence a loop $S' \leftarrow T \rightarrow S'$ in $\mathcal Ext^1_\C(\bbm1, X)$ will be homotopically equivalent to the trivial loop if, and only if, the two arrows $S' \leftarrow T \rightarrow S'$ are the same. Thus, $\Omega_\C(S,f)$ is homotopically equivalent to the trivial loop if, and only if, $F_\lambda = F_\varrho$. By construction, this is if, and only if, $f_\lambda^E = f_\varrho^E$. Therefore (\ref{lem:EnC:2}) implies (\ref{lem:EnC:1}) if $n = 1$.

Finally, the canonical sequence $0 \rightarrow X \rightarrow X \oplus E \rightarrow \bbm1 \oplus E \rightarrow \bbm1 \rightarrow 0$ is admissible exact for all objects $E \in \C$. Thus if (\ref{lem:EnC:1}) is valid for $n = 2$, then $Z_X(\C) \subseteq Z_{\bbm1 \oplus E} = Z(\C) \cap Z_E(\C) = Z_E(\C)$. Therefore the second item implies the first if $n > 1$.
\end{proof}

The (Drinfel'd) center of the monoidal category $\C$, as introduced in \cite{JoSt91}, is related to our definition, as we will notice shortly. Let us recall this object, with slightly changed terminology.
\end{nn}

\begin{defn}
Let $\mathsf U \subseteq \C$ be a monoidal subcategory. The \textit{$\mathsf U$-restricted monoidal center} of $\C$ is the following category, denoted by $\mathcal Z(\mathsf U, \C)$. Its objects are pairs $(X,a)$, where $X \in \Ob \C$ and $a : (- \otimes X) \rightarrow (X \otimes -)$ is a natural isomorphism of functors $\mathsf U \rightarrow \C$, such that $a_\bbm1 = \varrho_X^{-1} \circ \lambda_X$ and $a_{U \otimes V} = (V \otimes a_U) \circ (a_V \otimes U)$ for all $U, V \in \Ob \mathsf U$. A morphism $(X,a) \rightarrow (X',a')$ in $\mathcal Z(\mathsf U, \C)$ is a morphism $f: X \rightarrow X'$ with $(f \otimes U) \circ a_U = a'_U \circ (U \otimes f)$ for all $U \in \Ob \mathsf U$.
\end{defn}

$\mathcal Z(\C, \C)$ is a braided tensor $K$-category, with tensor functor $(X,a) \otimes (X',a') = (X \otimes X', (X \otimes a')\circ(a \otimes X'))$ and braiding $\gamma_{(X,a),(X',a')} = a_{X'}$.

\begin{lem}\label{lem:Xrelcen}
For an object $X \in \C$ the following statements are equivalent.
\begin{enumerate}[\rm(1)]
\item\label{lem:Xrelcen:1} $Z(\C) = Z_{X}(\C)$.
\item\label{lem:Xrelcen:2} The isomorphism $\gamma_X = \varrho_X^{-1} \circ \lambda_X$ gives rise to an isomorphism between the functors
$$
(- \otimes -),\, (- \otimes - ) \circ T : \add(\bbm1) \times \add(X) \longrightarrow \C,
$$
where $T$ denotes the twist functor $T : \C \times \C \rightarrow \C \times \C$, $T(C,D) = (D,C)$.
\item\label{lem:Xrelcen:3} There is a natural transformation $a : (- \otimes X) \rightarrow (X \otimes -)$ between functors $\add(\bbm1) \rightarrow \C$ such that $(X,a)$ belongs to $\mathcal Z(\add(\bbm1), \C)$.
\end{enumerate}
\end{lem}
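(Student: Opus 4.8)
The plan is to establish the cycle of implications $(\ref{lem:Xrelcen:1}) \Rightarrow (\ref{lem:Xrelcen:2}) \Rightarrow (\ref{lem:Xrelcen:3}) \Rightarrow (\ref{lem:Xrelcen:1})$. The whole argument rests on one elementary identity: for a morphism $f \in \Hom_\C(\bbm1,\bbm1)$ one has
$$
\gamma_X \circ (f \otimes X) = (X \otimes f) \circ \gamma_X \qquad\Longleftrightarrow\qquad f^X_\lambda = f^X_\varrho ,
$$
which follows at once by inserting $\gamma_X = \varrho_X^{-1}\lambda_X$ and using the rewritings $\lambda_X \circ (f\otimes X) = f^X_\lambda \circ \lambda_X$ and $(X\otimes f)\circ\varrho_X^{-1} = \varrho_X^{-1}\circ f^X_\varrho$, so that both sides equal $\varrho_X^{-1}\circ f^X_\lambda\circ\lambda_X$ resp.\ $\varrho_X^{-1}\circ f^X_\varrho\circ\lambda_X$. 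Since $\Hom_\C(\bbm1,\bbm1) = Z(\C)$, this says precisely that the square expressing naturality of $\gamma_X$ in the $\add(\bbm1)$-variable commutes for every scalar $f$ exactly when $(\ref{lem:Xrelcen:1})$ holds.

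For $(\ref{lem:Xrelcen:1}) \Rightarrow (\ref{lem:Xrelcen:2})$ I would extend $\gamma_X$ to a natural isomorphism $\eta_{C,D}\colon C\otimes D \to D\otimes C$ over $\add(\bbm1)\times\add(X)$. As both functors in $(\ref{lem:Xrelcen:2})$ are additive (the tensor product is $K$-bilinear, hence additive on morphisms), such a transformation is determined by its behaviour on the additive generators $\bbm1$ and $X$: on the objects $\bbm1^{\oplus n}\otimes X^{\oplus m}$ one distributes $\otimes$ over $\oplus$, applies $\gamma_X$ in each of the $nm$ summands, and reindexes the summands according to the twist $T$; the value on an arbitrary retract is then forced by functoriality. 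Because naturality in a product category may be checked one variable at a time, it remains only to verify the two one-variable squares at the generators. In the $\add(X)$-variable the square reads $\gamma_X\circ(\bbm1\otimes g) = (g\otimes\bbm1)\circ\gamma_X$ for $g\in\Hom_\C(X,X)$, and this is automatic from the naturality of $\lambda$ and $\varrho$ (both sides equal $\varrho_X^{-1}\circ g\circ\lambda_X$). In the $\add(\bbm1)$-variable it reduces, by additivity, to the scalar square of the first paragraph, which commutes precisely by $(\ref{lem:Xrelcen:1})$. As $\eta$ is assembled from the isomorphism $\gamma_X$ and coherence isomorphisms it is itself invertible, and $\eta_{\bbm1,X} = \gamma_X$ by construction.

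The implication $(\ref{lem:Xrelcen:3}) \Rightarrow (\ref{lem:Xrelcen:1})$ is immediate: if $(X,a)\in\mathcal Z(\add(\bbm1),\C)$ with $a_\bbm1 = \varrho_X^{-1}\lambda_X = \gamma_X$, then the naturality square of $a$ at the morphism $f\colon\bbm1\to\bbm1$ yields $\gamma_X\circ(f\otimes X) = (X\otimes f)\circ\gamma_X$, whence $f^X_\lambda = f^X_\varrho$ for every $f\in Z(\C)$, i.e.\ $Z(\C)=Z_X(\C)$. For $(\ref{lem:Xrelcen:2}) \Rightarrow (\ref{lem:Xrelcen:3})$ I would set $a_U := \eta_{U,X}$, a natural isomorphism $(-\otimes X)\to(X\otimes-)$ on $\add(\bbm1)$ with $a_\bbm1 = \eta_{\bbm1,X} = \gamma_X = \varrho_X^{-1}\lambda_X$.

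The one point still to be checked, and the step I expect to be the main obstacle, is the cocycle condition $a_{U\otimes V} = (V\otimes a_U)\circ(a_V\otimes U)$ that is built into the definition of $\mathcal Z(\add(\bbm1),\C)$; note $U\otimes V\in\add(\bbm1)$ since $\add(\bbm1)$ is a monoidal subcategory by Example~\ref{exa:monoidal}(1), so the condition is meaningful. Unwinding the explicit description of $a$ on objects $U,V\in\add(\bbm1)$ turns this into an identity purely among the constraints $\lambda$, $\varrho$ and $\alpha$ on copies of $\bbm1$, which I would force using Mac Lane coherence together with the unit identities recorded in Remark~\ref{rem:opposite}(2) (in particular $\varrho_{X\otimes Y}\circ\alpha_{X,Y,\bbm1} = X\otimes\varrho_Y$ and $(\lambda_X\otimes Y)\circ\alpha_{\bbm1,X,Y} = \lambda_{X\otimes Y}$). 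The bulk of the write-up is therefore this coherence bookkeeping in the non-strict setting; all of the genuine mathematical content is already contained in the scalar identity of the first paragraph.
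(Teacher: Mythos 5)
Your argument is correct and follows essentially the same route as the paper's: the whole content is the scalar identity $\gamma_X \circ (f \otimes X) = (X \otimes f) \circ \gamma_X \Leftrightarrow f^X_\lambda = f^X_\varrho$, followed by additive extension to direct summands of finite direct sums of $\bbm1$ and $X$ (the paper's formulas (\ref{eq:gammaex})). The only differences are organisational — you run a cycle of implications where the paper proves $(\ref{lem:Xrelcen:1})\Leftrightarrow(\ref{lem:Xrelcen:2})$ and $(\ref{lem:Xrelcen:2})\Leftrightarrow(\ref{lem:Xrelcen:3})$ separately — and you are in fact more explicit than the paper about the cocycle condition for membership in $\mathcal Z(\add(\bbm1),\C)$, which the paper dismisses as obvious.
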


\begin{proof}
If (\ref{lem:Xrelcen:1}) holds, then for every $f \in \Hom_\C(\bbm1, \bbm1)$ the diagram
$$
\xymatrix@C=25pt{
\bbm1 \otimes X \ar[d]_-{\gamma_X} \ar[r]^-{\lambda_X} & X \ar@{=}[d] \ar[r]^-{f_\lambda^X} & X \ar@{=}[d] \ar[r]^-{\lambda^{-1}_X} & \bbm1 \otimes X \ar[d]^-{\gamma_X}\\
X \otimes \bbm1 \ar[r]^-{\varrho_X} & X \ar[r]^-{f_\varrho^X} & X \ar[r]^-{\varrho^{-1}_X} & X \otimes \bbm1
}
$$
commutes. Thus, $\gamma_X \circ (X \otimes f) = (f \otimes X) \circ \gamma_X$. If now $U = \bigoplus_i \bbm1$, $Y = \bigoplus_j X$ are finite direct sums of copies of $\bbm1$ and $X$, and $U' \subseteq U$, $Y' \subseteq Y$ are direct summands, then put
\begin{equation}\label{eq:gammaex}
\begin{aligned}
\gamma_{\bbm1, Y} = \bigoplus_j \gamma_X \, , &\qquad \gamma_{\bbm1, Y'} = (Y \xrightarrow{\mathrm{can}} Y') \circ \gamma_Y \circ (Y' \xrightarrow{\mathrm{can}} Y)\, ,\\
\gamma_{U, Y'} = \bigoplus_{i} \gamma_{\bbm1, Y'}\, , &\qquad \gamma_{U', Y'} = (U \xrightarrow{\mathrm{can}} U') \circ (\gamma_{U,Y'}) \circ (U' \xrightarrow{\mathrm{can}} U)
\end{aligned}
\end{equation}
in order to obtain the desired functorial isomorphisms. Conversely, (\ref{lem:Xrelcen:2}) implies (\ref{lem:Xrelcen:1}) by similar arguments. The implication $(\ref{lem:Xrelcen:2}) \,\Longrightarrow\, (\ref{lem:Xrelcen:3})$ is obvious, and its convers follows by extending $a$ to direct summands of direct sums of $X$ as done in (\ref{eq:gammaex}).
\end{proof}

\begin{prop}\label{prop:Xrelcennat}
Consider the following statements.
\begin{enumerate}[\rm(1)]
\item\label{prop:Xrelcennat:1}$Z(\C) = Z_{X}(\C)$ for all $X \in \C$.
\item\label{prop:Xrelcennat:2} The isomorphisms $\gamma_X = \varrho_X^{-1} \circ \lambda_X$, for $X \in \C$, define an isomorphism between the functors
$$
(- \otimes -),\, (- \otimes - ) \circ T : \add(\bbm1) \times \C \longrightarrow \C,
$$
where $T$ denotes the twist functor $T : \C \times \C \rightarrow \C \times \C$, $T(C,D) = (D,C)$.
\item\label{prop:Xrelcennat:2a} For all $X \in \C$, there is a natural transformation $a : (- \otimes X) \rightarrow (X \otimes -)$ between functors $\add(\bbm1) \rightarrow \C$ such that $(X,a)$ belongs to $\mathcal Z(\add(\bbm1),\C)$.

\item\label{prop:Xrelcennat:3} $\langle -, - \rangle_X \equiv 0$ for all $X \in \C$.
\end{enumerate}
Then the implications $(\ref{prop:Xrelcennat:1}) \, \Longleftrightarrow \, (\ref{prop:Xrelcennat:2}) \,\Longleftrightarrow \, (\ref{prop:Xrelcennat:2a}) \, \Longrightarrow \, (\ref{prop:Xrelcennat:3})$ hold true. Thus $\langle -,- \rangle_X \equiv 0$ for all $X \in \C$ if $Z(\C) = K \id_\bbm1$.
\end{prop}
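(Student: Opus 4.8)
The plan is to read this proposition as the ``for all $X$'' globalisation of Lemma~\ref{lem:Xrelcen}, supplemented at the end by the vanishing criterion of Lemma~\ref{lem:EnC}. First I would dispose of the equivalence $(\ref{prop:Xrelcennat:1}) \Leftrightarrow (\ref{prop:Xrelcennat:2a})$ essentially for free: both assertions are quantified over all $X \in \C$, and for each fixed $X$ the statement ``$Z(\C) = Z_X(\C)$'' is equivalent to ``there exists $a$ with $(X,a) \in \mathcal Z(\add(\bbm1),\C)$'' by the equivalence $(\ref{lem:Xrelcen:1}) \Leftrightarrow (\ref{lem:Xrelcen:3})$ of Lemma~\ref{lem:Xrelcen}. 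Running the universal quantifier through this per-$X$ equivalence turns $(\ref{prop:Xrelcennat:1})$ into $(\ref{prop:Xrelcennat:2a})$ verbatim.

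Next I would connect condition $(\ref{prop:Xrelcennat:2})$ to these. For $(\ref{prop:Xrelcennat:2}) \Rightarrow (\ref{prop:Xrelcennat:1})$ I would restrict, for each fixed $X$, the global natural isomorphism on $\add(\bbm1) \times \C$ to the full subcategory $\add(\bbm1) \times \add(X)$; since its component at $(\bbm1,X)$ is $\gamma_X = \varrho_X^{-1} \circ \lambda_X$ by the hypothesis of $(\ref{prop:Xrelcennat:2})$, this restriction is exactly statement $(\ref{lem:Xrelcen:2})$ of Lemma~\ref{lem:Xrelcen}, whence $Z(\C) = Z_X(\C)$ by $(\ref{lem:Xrelcen:2}) \Rightarrow (\ref{lem:Xrelcen:1})$. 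The converse $(\ref{prop:Xrelcennat:1}) \Rightarrow (\ref{prop:Xrelcennat:2})$ is the substantial step, and I would obtain it by globalising the construction inside the proof of Lemma~\ref{lem:Xrelcen}: writing each object of $\add(\bbm1)$ as a direct summand of a finite sum $\bigoplus_i \bbm1$, I would define $\gamma_{U,X}$ through the additive formulas~(\ref{eq:gammaex}), now letting the second variable $X$ range over all of $\C$ rather than only over $\add(X)$. Naturality in the $\C$-variable is automatic, since every $\gamma_{U,X}$ is assembled from $\gamma_{\bbm1,X} = \varrho_X^{-1}\circ\lambda_X$, which is natural in $X$ by naturality of $\lambda, \varrho$ and $K$-bilinearity of $\otimes$ on morphisms.

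The only genuine obstacle is naturality in the $\add(\bbm1)$-variable, and this is precisely where hypothesis $(\ref{prop:Xrelcennat:1})$ is consumed. An endomorphism of $\bbm1$ is an element $f \in Z(\C) = \Hom_\C(\bbm1,\bbm1)$, and the naturality square for $f$ at $(\bbm1,X)$ reads $(X \otimes f) \circ \gamma_{\bbm1,X} = \gamma_{\bbm1,X} \circ (f \otimes X)$. Rewriting the two sides by means of $\lambda_X \circ (f \otimes X) = f^X_\lambda \circ \lambda_X$ and $\varrho_X \circ (X \otimes f) = f^X_\varrho \circ \varrho_X$, both collapse to $\varrho_X^{-1}\circ f^X_\lambda \circ \lambda_X$ respectively $\varrho_X^{-1}\circ f^X_\varrho \circ \lambda_X$, so the square commutes if and only if $f^X_\lambda = f^X_\varrho$, i.e. $f \in Z_X(\C)$. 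Hence the square commutes for every $f$ exactly when $Z(\C) = Z_X(\C)$, which is $(\ref{prop:Xrelcennat:1})$; the general matrix morphisms of $\add(\bbm1)$ and the passage to direct summands are then handled by additivity just as in~(\ref{eq:gammaex}). This finishes $(\ref{prop:Xrelcennat:1}) \Leftrightarrow (\ref{prop:Xrelcennat:2}) \Leftrightarrow (\ref{prop:Xrelcennat:2a})$, and I expect this functorial repackaging of the one-object computation to be the crux of the whole argument.

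Finally, for $(\ref{prop:Xrelcennat:1}) \Rightarrow (\ref{prop:Xrelcennat:3})$ I would invoke Lemma~\ref{lem:EnC}: under $(\ref{prop:Xrelcennat:1})$ one has $Z_X(\C) = Z(\C) = Z_E(\C)$ for all objects $E$ and $X$, so condition $(\ref{lem:EnC:0})$ of Lemma~\ref{lem:EnC} holds for every $X$, forcing $(\ref{lem:EnC:2})$ and hence the vanishing of each component of $\langle -,-\rangle_X$, for all $X$. For the closing assertion, suppose $Z(\C) = K\,\id_\bbm1$. The map $f \mapsto \Delta^X(f) = f^X_\lambda - f^X_\varrho$ is $K$-linear, and $\Delta^X(\id_\bbm1) = \id_X - \id_X = 0$ since $\id_\bbm1 \otimes X = \id_{\bbm1 \otimes X}$; thus the submodule $Z_X(\C) = \Ker\Delta^X$ contains $K\,\id_\bbm1$, and being contained in $Z(\C) = K\,\id_\bbm1$ it must equal it. So $(\ref{prop:Xrelcennat:1})$ holds, and $(\ref{prop:Xrelcennat:3})$ follows.
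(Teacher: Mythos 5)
Your proposal is correct and follows essentially the same route as the paper, which simply cites Lemma \ref{lem:Xrelcen} for the equivalence of (1), (2), (2a) and Lemma \ref{lem:EnC} for the implication to (3); your write-up merely fills in the details (naturality in both variables of the globalised $\gamma$, and the observation that $\Delta^X$ is $K$-linear with $\Delta^X(\id_{\bbm1})=0$ for the closing claim) that the paper leaves implicit.
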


\begin{proof}
The equivalence of (\ref{prop:Xrelcennat:1}), (\ref{prop:Xrelcennat:2}) and (\ref{prop:Xrelcennat:2a}) follows from Lemma \ref{lem:Xrelcen}, whereas (\ref{prop:Xrelcennat:1}) implies (\ref{prop:Xrelcennat:3}) by Lemma \ref{lem:EnC}.
\end{proof}

\begin{nn}
Let $\C$ and $\D$ be exact $K$-categories, and let $\mathfrak X: \C \rightarrow \D$ be an exact functor. As $\mathfrak X$ takes admissible exact sequences to admissible exact sequences, $\mathfrak X$ gives rise to a functor $\mathsf G(\mathcal Ext^n_\C(C,D)) \rightarrow \mathsf G(\mathcal Ext^n_\D(\mathfrak X C, \mathfrak X D)$, see Section \ref{sec:funda}, and thus to a group homomorphism $\pi_1(\mathcal Ext^n_\C(C,D),S) \rightarrow \pi_1(\mathcal Ext^n_\D(\mathfrak X C,\mathfrak X D),\mathfrak X (S))$ for each admissible extension $S \in \mathcal Ext^n_\C(C,D)$. By \cite[Lem.\,3.2.6]{He14b}, this map renders
\begin{equation}\label{eq:commdiaggr}
\begin{aligned}
\xymatrix{
\Ext^n_\C(C,D) \ar[r]^-{u_\C} \ar[d]_-{\mathfrak X^n} & \pi_1(\mathcal Ext^n_\C(C,D),S) \ar[d] \\
\Ext^n_\D(\mathfrak X C, \mathfrak X D) \ar[r]^-{u_\D} & \pi_1(\mathcal Ext^n_\D(\mathfrak X C,\mathfrak X D), \mathfrak X(S))
}
\end{aligned}
\end{equation}
commutative, where $\mathfrak X^n$ sends the equivalence class of an extension $T$ to the equivalence class of the extension $\mathfrak X(T)$.
\end{nn}

\begin{prop}\label{prop:functcomp}
Assume that $(\C, \otimes_\C, \bbm1_\C)$ and $(\D, \otimes_\D, \bbm1_\D)$ are exact and monoidal $K$-categories. Let $\mathfrak X: (\C, \otimes_\C, \bbm1_\C) \rightarrow (\D, \otimes_\D, \bbm1_\D)$ be an exact and almost strong monoidal functor. Let $X$ be an object in $\C$ and $Y = \mathfrak X(X)$.
\begin{enumerate}[\rm(1)]
\item The functor $\mathfrak X$ induces a $K$-algebra homomorphism $\delta_X: Z_X(\C) \rightarrow Z_{Y}(\D)$. It is an injection $($surjection$)$ if, and only if, the functor $\mathfrak X$ defines an injection $($surjection$)$ $\Hom_\C(\bbm1_\C, \bbm1_\C) \rightarrow \Hom_\D(\mathfrak X \bbm1_\C, \mathfrak X \bbm1_\C)$.
\item\label{prop:functcomp:2} The functor $\mathfrak X$ gives rise to a graded map $\mathfrak X^\ast : \Ext^\ast_\C(\bbm1_\C, X) \rightarrow \Ext^\ast_\D(\bbm1_\D, Y)$ such that
$$
\xymatrix@C=35pt{
\Ext^n_\C(\bbm1_\C, X) \times Z_X(\C) \ar[d]_-{\mathfrak X^{n} \times \delta_X} \ar[r]^-{\langle -, -\rangle_X} & \Ext^{n-1}_\C(\bbm1_\C, X) \ar[d]^-{\mathfrak X^{n-1}}\\
\Ext^n_\D(\bbm1_\D, Y) \times Z_{Y}(\D) \ar[r]^-{\langle -, -\rangle_Y} & \Ext^{n-1}_\D(\bbm1_\D, Y)
}
$$
is commutative for all integers $n \geqslant 1$. The graded map $\mathfrak X^\ast$ is bijective, if $\mathfrak X$ is an equivalence of $K$-categories.
\end{enumerate}
\end{prop}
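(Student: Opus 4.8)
The plan is to reduce both parts to a single compatibility identity for the left and right multiplication operators, and then to feed the resulting statement about loops into the square $(\ref{eq:commdiaggr})$ relating the Retakh isomorphisms $u_\C$, $u_\D$ to $\mathfrak X$ on fundamental groups. For part (1), set $\delta_X(f) = \phi_0^{-1} \circ \mathfrak X(f) \circ \phi_0 \colon \bbm1_\D \to \bbm1_\D$; since $\phi_0$ is invertible and $\mathfrak X$ is $K$-linear, $\delta_X$ is a $K$-algebra homomorphism, and as conjugation by $\phi_0$ is bijective it is injective (surjective) precisely when $\mathfrak X$ is on $\Hom_\C(\bbm1_\C, \bbm1_\C)$. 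The substantive point is that $\delta_X$ lands in $Z_Y(\D)$, and I would deduce this from the two identities $\mathfrak X(f^C_\lambda) = (\delta_X f)^{\mathfrak X C}_\lambda$ and $\mathfrak X(f^C_\varrho) = (\delta_X f)^{\mathfrak X C}_\varrho$, valid for every object $C \in \C$. The first is a short chase: rewriting $\lambda^\D_{\mathfrak X C} = \mathfrak X(\lambda^\C_C) \circ \phi_{\bbm1_\C,C} \circ (\phi_0 \otimes_\D \mathfrak X C)$ via the left-unit triangle of Definition \ref{def:monoidalfunc}, using $\phi_0 \circ \delta_X(f) = \mathfrak X(f) \circ \phi_0$ and the naturality of $\phi$ in its first slot, one obtains $\lambda^\D_{\mathfrak X C} \circ (\delta_X(f) \otimes_\D \mathfrak X C) = \mathfrak X(f^C_\lambda) \circ \lambda^\D_{\mathfrak X C}$, whence the claim; the second is symmetric, via the right-unit triangle. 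Therefore $\Delta^{\mathfrak X C}(\delta_X f) = \mathfrak X(\Delta^C f)$, so $\Delta^X(f) = 0$ forces $\Delta^Y(\delta_X f) = 0$ and $\delta_X(Z_X(\C)) \subseteq Z_Y(\D)$.

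For part (2), I define $\mathfrak X^n$ by applying the exact functor $\mathfrak X$ termwise to an admissible $n$-extension of $\bbm1_\C$ by $X$ and replacing $\mathfrak X\bbm1_\C$ by $\bbm1_\D$ through $\phi_0$; exactness makes the result an admissible $n$-extension of $\bbm1_\D$ by $Y$, and since $\mathfrak X$ is a functor on the extension categories this descends to $\mathfrak X^n \colon \Ext^n_\C(\bbm1_\C, X) \to \Ext^n_\D(\bbm1_\D, Y)$. The core of the argument is to show that $\mathfrak X$ carries the loop $\Omega_\C(S,f)$ to the loop $\Omega_\D(\mathfrak X^n S, \delta_X f)$, up to the $\phi_0$-identification. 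Recall that $\Omega_\C(S,f)$ consists of the two parallel morphisms $F_\lambda, F_\varrho \colon S \# f \to f \# S$ assembled by pushing out along the admissible monomorphism $c_n$ and pulling back along the admissible epimorphism $c_0$, the two arrows differing only through the use of $f^{C_{n-1}}_\lambda, f^{C_0}_\lambda$ versus $f^{C_{n-1}}_\varrho, f^{C_0}_\varrho$. Because an exact functor preserves pushouts along admissible monomorphisms and pullbacks along admissible epimorphisms (Section \ref{sec:exactmono}), $\mathfrak X$ takes the entire diagram defining $\Omega_\C(S,f)$ to the analogous diagram over $\D$; and by the identities of part (1) the operators $f^{C_i}_\lambda, f^{C_i}_\varrho$ go to $(\delta_X f)^{\mathfrak X C_i}_\lambda, (\delta_X f)^{\mathfrak X C_i}_\varrho$, so $\mathfrak X(F_\lambda), \mathfrak X(F_\varrho)$ are exactly the arrows defining $\Omega_\D(\mathfrak X^n S, \delta_X f)$.

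It remains to convert this into the asserted commutativity. Since $\langle \alpha, f\rangle_X = u_\C^{-1}\Omega_\C(S,f)$, and likewise over $\D$, I apply the commutative square $(\ref{eq:commdiaggr})$, which states that $\mathsf G(\mathfrak X)$ intertwines $u_\C$ with $u_\D$; combining it with the loop identity just established and applying $u_\D^{-1}$ yields $\mathfrak X^{n-1}\langle \alpha, f\rangle_X = \langle \mathfrak X^n \alpha, \delta_X f\rangle_Y$ for every representing extension $S$ of $\alpha$, which is the commutativity of the diagram. Finally, if $\mathfrak X$ is an equivalence of exact $K$-categories it induces equivalences $\mathcal Ext^n_\C(\bbm1_\C, X) \simeq \mathcal Ext^n_\D(\bbm1_\D, Y)$, hence isomorphisms on all $\Ext$-groups, so each $\mathfrak X^n$ is bijective and $\mathfrak X^\ast$ is an isomorphism of graded groups.

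The main obstacle I anticipate is the middle step of part (2): checking that $\mathfrak X$ applied to the comparison morphisms $F_\lambda, F_\varrho$, which are defined by several nested universal properties, reproduces $F'_\lambda, F'_\varrho$ on the nose. Each ingredient is in hand -- preservation of the relevant pushouts and pullbacks together with the intertwining of $f^C_\lambda, f^C_\varrho$ -- but one must verify that the canonical isomorphisms identifying $\mathfrak X$ of a pushout (pullback) with the pushout (pullback) of the $\mathfrak X$-image are compatible with these multiplication operators, so that the two loops agree strictly and not merely up to an unrecorded conjugation that would spoil the equality in $\pi_1$.
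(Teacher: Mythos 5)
Your proposal is correct and follows essentially the same route as the paper's proof: the same definition $\delta_X(f) = \phi_0^{-1}\circ\mathfrak X(f)\circ\phi_0$, the same unit-triangle chase giving $(\delta_X f)^{\mathfrak X C}_\lambda = \mathfrak X(f^C_\lambda)$ and its $\varrho$-analogue, the same strict identification $\mathfrak X\,\Omega_\C(S,f) = \Omega_\D(\mathfrak X S,\delta_X f)$ via preservation of the relevant pushouts and pullbacks, and the same appeal to the square $(\ref{eq:commdiaggr})$ to conclude. The delicate point you flag at the end is exactly the one the paper addresses by noting that the equality of loops is strict and not merely a homotopy equivalence.
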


\begin{proof}
The map $\delta_X$ is induced by restricting $\delta: Z(\C) \rightarrow Z(\D)$, $\delta(f) = \phi_0^{-1} \circ \mathfrak X(f) \circ \phi_0$. Indeed, for $f \in \Hom_\C(\bbm1_\C, \bbm1_\C)$, $C \in \Ob \C$ and $D = \mathfrak X(C)$, the diagram
$$
\xymatrix@R=14pt@C=27pt{
& \bbm1_\D \otimes_\D D \ar[r]^-{\phi_0 \otimes_\D D} \ar[dd]_-{\lambda_D^\D} & \mathfrak X \bbm1_\C \otimes_\D D \ar[r]^-{\mathfrak X(f) \otimes_\D D} \ar[dd]_-{\phi_{\bbm1_\C, C}} & \mathfrak X \bbm1_\C \otimes_\D D \ar[dd]^-{\phi_{\bbm1_\C, C}} \ar[r]^-{\phi_0^{-1} \otimes_\D D} & \bbm1_\D \otimes_\D D \ar[dd]^-{\lambda^\D_D} \ar[dr]^-{\lambda^\D_D} &\\
D \ar[ur]^-{(\lambda_D^\D)^{-1}} \ar[dr]_-{\id_D} & & & & & D\\
& D \ar[r]^-{\mathfrak X(\lambda^\C_C)^{-1}} & \mathfrak X (\bbm1_\C \otimes_\C C) \ar[r]^-{\mathfrak X(f \otimes_\C C)} & \mathfrak X(\bbm1_\C \otimes_\C C) \ar[r]^-{\mathfrak X(\lambda^\C_C)} & D\ar[ur]_-{\id_D} &
}
$$
commutes, that is, $\delta(f)^{D}_\lambda = \delta(f^C_\lambda)$. Similarly, we have $\delta(f)^{D}_\varrho = \delta(f^C_\varrho)$, so that $f \in Z_X(\C)$ will imply $\delta(f) \in Z_{Y}(\D)$. Moreover, $\delta_X$ is an algebra homomorphism since $\phi_0 \circ \phi_0^{-1} = \id_{\bbm1_\D}$. By definition, $\delta_X$ will be injective/surjective, if $\mathfrak X$ fulfils the requirements listed.

As for item (\ref{prop:functcomp:2}), the point is that since $\mathfrak X$ is exact, it commutes with pushouts along admissible monomorphisms and pullbacks along admissible epimorphisms. Thus the induced functor $\mathfrak X : \mathcal Ext^n_\C(\bbm1_\C,X) \rightarrow \mathcal Ext^n_\D(\bbm1_\D, \mathfrak X(X))$ sending an extension $0 \rightarrow X \rightarrow E_{n-1} \rightarrow \cdots \rightarrow E_0 \rightarrow \bbm1_\C \rightarrow 0$ to  $0 \rightarrow \mathfrak X(X) \rightarrow \mathfrak X(E_{n-1}) \rightarrow \cdots \rightarrow \mathfrak X(\bbm1_\C) \cong \bbm1_\D \rightarrow 0$ satisfies $\mathfrak X(S\#f) = \mathfrak X(S)\#\delta(f)$ and $\mathfrak X(f\#S) = \delta(f)\#\mathfrak X(S)$ for all $S \in \mathcal Ext^n_\C(\bbm1_\C, X)$, $f \in \Hom_\C(\bbm1_\C, \bbm1_\C)$. Now, as we already noticed,
$$
\delta(f_\lambda^C) = \delta(f)_\lambda^{\mathfrak X(C)} \quad \text{and} \quad \delta(f_\varrho^C) = \delta(f)_\varrho^{\mathfrak X(C)},
$$
so that $\mathfrak X$ will take the morphisms of complexes that define $\Omega_\C(S,f)$ to those that define $\Omega_\D(\mathfrak X, \delta_X(f))$ (for $S \in \mathcal Ext^n_\C(\bbm1_\C, X)$, $f \in Z_X(\C)$). To summarise,
\begin{equation}\label{eq:loops}
\mathfrak X \Omega_\C(S,f) = \Omega_\D(\mathfrak X(S),\delta_X(f)).
\end{equation}
Observe that we actually have strict \textit{equality} not only homotopy equivalence. We conclude by
\begin{align*}
\mathfrak X^{n-1}(\langle \alpha, f\rangle_X) &= \mathfrak X^{n-1}(u_\C^{-1}\Omega_\C(S,f)) && \text{(by definition of $\langle -,- \rangle_X$)}\\
&= u_\D^{-1} \mathfrak X\Omega_\C(S,f)) && \text{(by the commutativity of (\ref{eq:commdiaggr}))} \\
&= u_\D^{-1} \Omega_\D(\mathfrak X(S),\delta_X(f)) && \text{(by (\ref{eq:loops}))}\\
&= \langle \mathfrak X^n(\alpha), \delta_X(f)\rangle_Y && \text{(by definition of $\langle -,- \rangle_Y$)}
\end{align*}
for all $\alpha = [S] \in \Ext^n_\C(\bbm1_\C, X)$ and $f \in Z_X(\C)$.
\end{proof}

\begin{rem}
Proposition \ref{prop:functcomp} remains valid if one replaces the term \textit{almost strong monoidal} by \textit{almost costrong monoidal}. However, the maps $\delta_X$ and $\mathfrak X^\ast$ will appear in a slightly different shape.
\end{rem}

\subsection{The loop $\Omega_\C(S,f)$ for modules}
For the remainder of this section, we fix a $K$-algebra $A$ (not necessarily projective over $K$) and an $A^\ev$-module $M$. Further, $\otimes$ will always stand for $\otimes_K$. Let us explicitly describe the (morphisms occurring in the) loops $\Omega_A(S,f) = \Omega_{\C}(S,f)$ where $\C = \Mod(A^\ev)$, $S$ is an $n$-extension of $A$ by $M$ in $\Mod(A^\ev)$ and $f \in Z_M(A) \subseteq \Hom_{A^\ev}(A,A)$. In what follows, we will abuse notation, and also write $f$ for the map $f^M = f_\lambda^M = f_\varrho^M : M \rightarrow M$. 

To begin with, assume that $S$ is a short exact sequence
$$
\xymatrix@C=18pt{
0 \ar[r] & M \ar[r]^-{i} & E \ar[r]^-{p} & A \ar[r] & 0
}\, .
$$
Recall that the pushout $M \oplus_M E$ of $(f, i)$ and the pullback $A \times_A E$ of $(f, p)$ can be expressed as
$$
M \oplus_M E = \frac{M \oplus E}{\{(f(m), -i(m)) \mid m \in M\}} = \Coker(f \oplus (-i)) \, ,
$$
$$
A \times_A E = \{(a,e) \in A \oplus E \mid f(a) = p(e)\} = \Ker(f + (-p))\, .
$$
The homomorphisms $f''_\lambda, f''_\varrho: M \oplus_M E \rightarrow A \times_A E$ which define $\Omega_A(S,f)$ are induced by the maps
$$
\zeta''_\lambda: M \oplus E \longrightarrow A \oplus E, \ (m,e) \mapsto (p(e), f(1)e + i(m))
$$
and
$$
\zeta''_\varrho: M \oplus E \longrightarrow A \oplus E, \ (m,e) \mapsto (p(e), ef(1) + i(m))\, .
$$
Now let $S$ be in $\mathcal Ext^n_{A^\ev}(A, M)$ for some integer $n > 1$. Suppose that $S$ has the following shape:
$$
\xymatrix@C=18pt{
0 \ar[r] & M \ar[r]^-{d_{n}} & E_{n-1} \ar[r]^-{d_{n-1}} & \cdots \ar[r]^-{d_1} & E_0 \ar[r]^-{d_0} & A \ar[r] & 0
}\, .
$$
The defining homomorphisms $f'_\lambda, f'_\varrho: M \oplus_M E_{m-1} \rightarrow E_{m-1}$, $\tilde{f}'_\lambda, \tilde{f}'_\varrho: E_0 \rightarrow A \times_A E_0$ of the loop $\Omega_\C(S,z)$ are induced by the following maps:
$$
\zeta'_\lambda: M \oplus E_{n-1} \longrightarrow E_{n-1}, \ (m,e) \mapsto f(1)e + d_n(m)\, ,
$$
$$
\zeta'_\varrho: A \oplus E_{n-1} \longrightarrow E_{n-1}, \ (m,e) \mapsto ef(1) + d_n(m)\, \,
$$
and
$$
\tilde{\zeta}'_\lambda: E_0 \longrightarrow A \oplus E_0, \ e \mapsto (d_0(e), f(1)e)\, ,
$$
$$
\tilde{\zeta}'_\varrho: E_0 \longrightarrow A \oplus E_0, \ e \mapsto (d_0(e), ef(1))\, .
$$
In all other degrees, the maps $E_i \rightarrow E_i$ are given by $f_\lambda^{E_i}$ and $f_\varrho^{E_i}$ respectively (for $i = 1, \dots, n-2$). Recall that, for any $n$, the maps $M \rightarrow M \oplus_M E_{n-1} \rightarrow E_{n-2}$ in $S \# f$ are induced by $m \mapsto (m,0)$ and $(m,e) \mapsto d_{n-1}(e)$, whereas the maps $E_1 \rightarrow A \times_A E_0 \rightarrow A$ in $f \# S$ are induced by $e \mapsto (0,d_1(e))$ and $(a,e) \mapsto e$.

\begin{nn}
Let us fix a projective resolution $\mathbb P A \rightarrow A \rightarrow 0$ of $A$ over $A^\ev$. Assume that it is given as
$$
\xymatrix@C=20pt{
\cdots \ar[r]^-{\pi_{n+2}} & P_{n+1} \ar[r]^-{\pi_{n+1}} & P_{n} \ar[r]^-{\pi_{n}} & P_{n-1} \ar[r]^-{\pi_{n-1}} & \cdots \ar[r]^-{\pi_1} & P_0 \ar[r]^-{\pi_0} & A \ar[r] & 0
}.
$$
Since $A \otimes A$ is projective as an $A^\ev$-module, and the multiplication map ${\mu: A \otimes A} \rightarrow A$ defines a surjective $A^\ev$-homomorphism, we may, and will, assume that $P_0 = A \otimes A$ and $\pi_0 = \mu$.

The multiplication map gives rise to the fundamental short exact sequence
\begin{equation}\tag{$\diamond$}\label{eq:fundexact}
\xymatrix@C=18pt{
0 \ar[r] & \Omega^1_A \ar[r]^-\iota & A \otimes A \ar[r]^-\mu \ar[r] & A \ar[r] & 0
}
\end{equation}
which will play a key role in the considerations below. The following lemmas are classical, see \cite[Chap.\,III, \S 10]{Bou89} or \cite[Prop.\,2.5]{CuQu95}, but will be stated for completeness.
\end{nn}

\begin{lem}\label{lem:omegagen}
The module $\Omega^1_A$ is, as a left and a right $A$-module, generated by the elements $\mathrm d a = a \otimes 1 - 1 \otimes a$ for $a \in A$.
\end{lem}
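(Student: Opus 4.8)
The plan is to take an arbitrary element of $\Omega^1_A = \Ker(\mu)$ and exhibit it explicitly as a left (respectively right) $A$-linear combination of the elements $\mathrm d a = a \otimes 1 - 1 \otimes a$. First I would check that each $\mathrm d a$ really lies in $\Omega^1_A$, which is immediate since $\mu(\mathrm d a) = \mu(a \otimes 1 - 1 \otimes a) = a - a = 0$. The whole argument then rests on a single reduction identity that lets one absorb a tensor factor into a $\mathrm d$-term at the cost of a term supported on $A \otimes 1$ (or $1 \otimes A$), which vanishes after summing over an element of the kernel.

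For the left-module statement I would use the left $A^\ev$-action $a \cdot (x \otimes y) = ax \otimes y$ on $A \otimes A$, under which
$$
a \cdot \mathrm d b = ab \otimes 1 - a \otimes b, \qquad \text{equivalently} \qquad a \otimes b = ab \otimes 1 - a \cdot \mathrm d b \, .
$$
Given an arbitrary $\omega = \sum_i a_i \otimes b_i \in \Omega^1_A$, so that $\sum_i a_i b_i = 0$, applying this summand-by-summand yields
$$
\omega = \Bigl(\sum_i a_i b_i\Bigr) \otimes 1 - \sum_i a_i \cdot \mathrm d b_i = -\sum_i a_i \cdot \mathrm d b_i \, ,
$$
which presents $\omega$ as a left $A$-linear combination of the $\mathrm d b_i$. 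For the right-module statement I would argue symmetrically with the right action $(x \otimes y) \cdot a = x \otimes ya$, for which $\mathrm d a \cdot b = a \otimes b - 1 \otimes ab$, hence $a \otimes b = \mathrm d a \cdot b + 1 \otimes ab$; summing over the same $\omega$ gives $\omega = \sum_i \mathrm d a_i \cdot b_i + 1 \otimes \bigl(\sum_i a_i b_i\bigr) = \sum_i \mathrm d a_i \cdot b_i$.

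There is essentially no serious obstacle here: the result is a direct computation once the one-sided reduction identity is isolated, and the two halves are formally dual under the opposite-algebra symmetry. The only point demanding care is bookkeeping — keeping the left and right $A^\ev$-actions on $A \otimes A$ cleanly separated so that the correct factor ($a_i b_i \otimes 1$ versus $1 \otimes a_i b_i$) is the one that is killed by the kernel condition $\sum_i a_i b_i = 0$.
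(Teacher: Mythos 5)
Your proof is correct and is essentially identical to the paper's own argument: the paper likewise verifies $\mu(\mathrm d a)=0$ and then writes $\sum_i a_i \otimes b_i = \sum_i (\mathrm d a_i) b_i = -\sum_i a_i\, \mathrm d b_i$ using the kernel condition $\sum_i a_i b_i = 0$. You have merely spelled out the two reduction identities more explicitly.
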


\begin{proof}
Clearly, $\mu( \mathrm d a ) = 0$ for all $a \in A$. If
$$
\mu \left( \sum_{i=1}^n{a_i \otimes b_i} \right) = 0 \quad \text{(for $a_1, \dots, a_n, b_1, \dots, b_n \in A$)},
$$
then $$\sum_{i=1}^n a_i \otimes b_i = \sum_{i=1}^n (\mathrm d a_i) b_i = \sum_{i=1}^n -a_i \mathrm d b_i.$$
\end{proof}

\begin{lem}
The map $\mathrm d : A \rightarrow \Omega_A^1, \ \mathrm d a = a \otimes 1  - 1 \otimes a$ is a $K$-linear derivation into the $A^\ev$-module $\Omega^1_A$ and has the following universal property: For every $A^\ev$-module $M$ and ever $K$-linear derivation $D : A \rightarrow M$ there exists a unique $A^\ev$-linear map $\overline D : \Omega^1_A \rightarrow M$ such that $\overline D \circ \mathrm d = D$. Moreover, the assignments $D \mapsto \overline D$ and $f \mapsto f \circ \mathrm d$ define mutually inverse isomorphisms between $\Der_K(A,M)$ and $\Hom_{A^\ev}(\Omega_A^1, M)$. They identify $\Inn_K(A,M)$ with $\Im \Hom_{A^\ev}(\iota, M)$.
\end{lem}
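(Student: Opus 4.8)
The plan is to handle the four assertions in order, the only genuinely non-formal point being the construction of $\overline D$. That $\mathrm d$ is $K$-linear is immediate from the $K$-bilinearity of $\otimes$. For the Leibniz rule I would use the bimodule structure $c \cdot (x \otimes y) = cx \otimes y$ and $(x \otimes y) \cdot c = x \otimes yc$ on $A \otimes A$ and compute
\begin{align*}
a \cdot \mathrm d b + \mathrm d a \cdot b &= (ab \otimes 1 - a \otimes b) + (a \otimes b - 1 \otimes ab)\\
&= ab \otimes 1 - 1 \otimes ab = \mathrm d(ab).
\end{align*}
In particular $\mathrm d 1 = 0$, and the same argument shows $D(1) = 0$ for every $K$-linear derivation $D$, a fact used tacitly below.

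For the universal property I would first exhibit $\overline D$ and then argue uniqueness. Given $D \in \Der_K(A,M)$, consider the $K$-linear (indeed left $A$-linear) map $A \otimes A \rightarrow M$ sending $a \otimes b \mapsto -a D(b)$, and let $\overline D$ be its restriction to $\Omega^1_A$. The crucial step, and the only place where the identity $\Omega^1_A = \Ker(\mu)$ is really used, is verifying that $\overline D$ is \emph{right} $A$-linear: for $\omega = \sum_i a_i \otimes b_i$ with $\sum_i a_i b_i = 0$ and $c \in A$,
\begin{align*}
\overline D(\omega \cdot c) &= -\sum_i a_i D(b_i c)\\
&= -\sum_i a_i D(b_i)\,c - \Big(\sum_i a_i b_i\Big) D(c) = \overline D(\omega)\,c,
\end{align*}
the middle term vanishing precisely because $\mu(\omega) = 0$. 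This is the main obstacle, in that it is the one step that is not pure bookkeeping. Evaluating on $\mathrm d a = a \otimes 1 - 1 \otimes a$ and using $D(1) = 0$ gives $\overline D(\mathrm d a) = D(a)$, so $\overline D \circ \mathrm d = D$. Uniqueness is then immediate from Lemma \ref{lem:omegagen}: two $A^\ev$-linear maps out of $\Omega^1_A$ that agree on the generators $\mathrm d a$ agree everywhere.

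Finally I would assemble the isomorphism and the statement on inner derivations. That $f \circ \mathrm d$ is a derivation for $A^\ev$-linear $f$ follows from the Leibniz rule above, so $f \mapsto f \circ \mathrm d$ lands in $\Der_K(A,M)$. One composite of the two assignments is the identity by construction ($\overline D \circ \mathrm d = D$), the other by the uniqueness clause ($\overline{f \circ \mathrm d} = f$); both maps are visibly $K$-linear, hence mutually inverse isomorphisms of $K$-modules. For the last claim I would use that $A \otimes A \cong A^\ev$ is free of rank one on $1 \otimes 1$, so evaluation at $1 \otimes 1$ gives $\Hom_{A^\ev}(A \otimes A, M) \cong M$, with $g$ corresponding to $m = g(1 \otimes 1)$ and $g(a \otimes b) = a m b$. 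Then $(g \circ \iota \circ \mathrm d)(a) = g(a \otimes 1) - g(1 \otimes a) = am - ma$ is the inner derivation attached to $m$, and as $m$ ranges over $M$ these exhaust $\Inn_K(A,M)$. Thus the isomorphism $f \mapsto f \circ \mathrm d$ carries $\Im \Hom_{A^\ev}(\iota, M)$ onto $\Inn_K(A,M)$, which is the asserted identification.
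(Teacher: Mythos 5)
Your proof is correct and takes essentially the same route as the paper: the paper defines $\overline D$ as the restriction to $\Omega^1_A$ of $a \otimes b \mapsto D(a)b$ (i.e., $\mu_M \circ (D \otimes A) \circ \iota$) and checks $A^\ev$-linearity on the generators $a'(\mathrm d a)a''$, while you use the mirror formula $a \otimes b \mapsto -aD(b)$ — the two agree on $\Ker(\mu)$ by the Leibniz rule — and verify right $A$-linearity directly from $\mu(\omega)=0$. The remaining steps (uniqueness via Lemma \ref{lem:omegagen}, the mutually inverse isomorphisms, and the identification of $\Inn_K(A,M)$ with $\Im\Hom_{A^\ev}(\iota,M)$ via evaluation at $1 \otimes 1$) match the paper, with your write-up actually spelling out the last point, which the paper leaves implicit.
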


\begin{proof}
It is straightforward to check that $\mathrm d$ is a derivation as claimed. Let $M$ be an $A^\ev$-module with right module structure map $\mu_M = \mu^r_M : M \otimes A \rightarrow M$. If $D: A \rightarrow M$ is any $K$-linear derivation then, $\overline D = \mu_M \circ (D \otimes A) \circ \iota$ is $A^\ev$-linear. In fact, if $a, a', a'' \in A$, then
$$
\overline D(a' (\mathrm d a) a'') = ( D(a'a) - D(a') a ) a'' = a' D(a) a''.
$$
Apparently, $\overline D \circ \mathrm d = D$, and $\overline D$ is the unique $A^\ev$-linear map with this property. Hence we obtain isomorphisms as claimed.
\end{proof}

\begin{lem}
Let $M$ be an $A^\ev$-module. Then, as $K$-modules, $\HH^0(A,M)$ is isomorphic to $\Hom_{A^\ev}(A,M)$ and $\HH^1(A,M) = \Out_K(A,M)$ is isomorphic to $\Ext^1_{A^\ev}(A,M)$.
\end{lem}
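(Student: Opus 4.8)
The plan is to establish the two isomorphisms independently: the degree-zero statement by a direct evaluation-at-the-unit argument, and the degree-one statement by applying $\Hom_{A^\ev}(-,M)$ to the fundamental sequence $(\ref{eq:fundexact})$ and reading off a cokernel.

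For $\HH^0$, I would use the identification $\HH^0(A,M) = M^A$ recorded earlier, and check that evaluation at the unit
$$
\ev_1 \colon \Hom_{A^\ev}(A,M) \longrightarrow M, \qquad \phi \longmapsto \phi(1),
$$
is injective with image exactly $M^A$. Indeed, $A^\ev$-linearity of $\phi$ forces $a\phi(1) = \phi(a) = \phi(1)a$ for all $a \in A$, so $\phi(1) \in M^A$; conversely, each $m \in M^A$ determines the $A^\ev$-linear map $a \mapsto am = ma$, and the two assignments are mutually inverse. This step is routine.

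For $\HH^1$, I would apply the functor $\Hom_{A^\ev}(-,M)$ to the fundamental short exact sequence $(\ref{eq:fundexact})$, namely $0 \to \Omega^1_A \xrightarrow{\iota} A \otimes A \xrightarrow{\mu} A \to 0$. Since $A \otimes A = A^\ev$ is free, hence projective, over $A^\ev$, one has $\Ext^1_{A^\ev}(A \otimes A, M) = 0$, so the long exact $\Ext$-sequence breaks off and the connecting homomorphism yields
$$
\Ext^1_{A^\ev}(A,M) \;\cong\; \Hom_{A^\ev}(\Omega^1_A,M)\big/\Im\Hom_{A^\ev}(\iota,M).
$$
By the preceding lemma the numerator is isomorphic to $\Der_K(A,M)$, and this isomorphism carries $\Im\Hom_{A^\ev}(\iota,M)$ precisely onto $\Inn_K(A,M)$; hence the right-hand side is $\Der_K(A,M)/\Inn_K(A,M) = \Out_K(A,M) = \HH^1(A,M)$, as desired.

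There is no serious obstacle here: the only point demanding care is exactness at the correct spot of the $\Ext$-sequence, which is guaranteed by the projectivity of $A^\ev$ over itself, together with the fact --- supplied verbatim by the preceding lemma --- that the identification $\Der_K(A,M) \cong \Hom_{A^\ev}(\Omega^1_A,M)$ matches inner derivations with the image of $\Hom_{A^\ev}(\iota,M)$. If a natural (rather than merely abstract) isomorphism is wanted, one verifies that all the maps involved are natural in $M$, but this refinement is not needed for the stated claim.
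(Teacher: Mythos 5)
Your proposal is correct and follows essentially the same route as the paper: the degree-one isomorphism is obtained exactly as in the text, by applying $\Hom_{A^\ev}(-,M)$ to the fundamental sequence, using projectivity of $A \otimes A$ over $A^\ev$ to truncate the long exact sequence, and invoking the preceding lemma to match $\Im\Hom_{A^\ev}(\iota,M)$ with $\Inn_K(A,M)$. Your degree-zero argument via evaluation at $1$ onto $M^A$ is only cosmetically different from the paper's use of left exactness of $\Hom_{A^\ev}(-,M)$ on the bar resolution; both are routine and correct.
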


\begin{proof}
The left exactness of $\Hom_{A^\ev}(-,M)$ forces
$$
\xymatrix@C=18pt{
0 \ar[r] & \Hom_{A^\ev}(A,M) \ar[r]^-{\mu^\ast} & \Hom_{A^\ev}(A \otimes A, M) \ar[r]^-{\beta_1^\ast} & \Hom_{A^\ev}(A \otimes A \otimes A, M)
}
$$
to be exact ($\beta_1: A^{\otimes 3} \rightarrow A^{\otimes 2}$ denotes the first differential in $\mathbb BA$). Thus,
$$
\HH^0(A,M) = H^0 \Hom_{A^\ev}(\mathbb B A, M) \cong \Ker \Hom_{A^\ev}(b_1,M) \cong \Hom_{A^\ev}(A,M).
$$
The fundamental exact sequence (\ref{eq:fundexact}) yields the exact sequence
$$
\xymatrix@C=12pt{
0 \ar[r] & \Hom_{A^\ev}(A,M) \ar[r]^-{\mu^\ast} & \Hom_{A^\ev}(A \otimes A, M) \ar[r]^-{\iota^\ast} & \Hom_{A^\ev}(\Omega^1_A, M) \ar[r] & \Ext^1_{A^\ev}(A,M) \ar[r] & 0
}
$$
which, when combined with the preceding lemma, gives
$$
\Out_K(A,M) = \frac{\Der_K(A,M)}{\Inn_K(A,M)} \cong \frac{\Hom_{A^\ev}(\Omega^1_A,M)}{\Im(\iota^\ast)} \cong \Ext^1_{A^\ev}(A,M).
$$
Hence the lemma is established.
\end{proof}

\begin{nn}
Let us elaborate further on the surjection
\begin{equation}\label{eq:surj}
\xymatrix@C=20pt{
\Der_K(A,M) \ar[r]^-{\mathrm{can}} & \displaystyle \frac{\Der_K(A,M)}{\Inn_K(A,M)} \ar[r]^-\sim & \Ext^1_{A^\ev}(A,M)
}.
\end{equation}
It sends a $K$-linear derivation $D$ to the equivalence class of the lower sequence in the pushout diagram below.
$$
\xymatrix{
0 \ar[r] & \Omega^1_A \ar[r]^-\iota \ar[d]_-{\overline{D}} & A \otimes A \ar[r]^-\mu \ar[d] & A \ar@{=}[d] \ar[r] & 0 \\
0 \ar[r] & M \ar[r] & M \oplus_{\Omega^1_A} (A \otimes A) \ar[r] & A \ar[r] & 0
}
$$
Observe that this sequence splits if, and only if, $D$ is inner. Conversely, let
$$
\xymatrix{
0 \ar[r] & M \ar[r]^-i & E \ar[r]^-p & A \ar[r] & 0
}
$$
be a short exact sequence of $A^\ev$-modules. Since $p$ is surjective, $1 \in A$ has a preimage under $p$. Let $e \in E$ be such that $p(e) = 1$. Now the multiplication map $\mu_e: A \otimes A \rightarrow X$, $\mu_e(a \otimes b) = aeb$ is $A^\ev$-linear and such that the right square in
$$
\xymatrix{
0 \ar[r] & \Omega^1_A \ar[r]^-\iota \ar@{-->}[d]_-{\tilde{\mu}_e} & A \otimes A \ar[r]^-\mu \ar[d]^-{\mu_e} & A \ar@{=}[d] \ar[r] & 0 \\
0 \ar[r] & M \ar[r]^-i & E \ar[r]^-p & A \ar[r] & 0
}
$$
commutes. Hence the dashed arrow $\tilde{\mu}_e$ is induced as indicated, given by $\tilde{\mu}_e(\mathrm d a) = i^{-1}(a e - e a)$. The map $D_e = \tilde{\mu}_e \circ \mathrm d$ is a $K$-linear derivation; if $e'$ is another preimage of $1 \in A$ under $g$, the difference $D_e - D_{e'}$ is inner. The hereby obtained well-defined map $\Ext^1_{A^\ev}(A,M) \rightarrow \Out_K(A,M)$ is the inverse map of the isomorphism in (\ref{eq:surj}). If $e \in E$ is as above, we will call $D_e$ the derivation \textit{defined} by $e$.
\end{nn}

\begin{thm}\label{thm:gersteqmonoidal}
Let $M$ be an $A^\ev$-module. The following diagram commutes for $n = 0, 1$.
$$
\xymatrix{
\HH^n(A,M) \times Z_M(A) \ar[r]^-{[-,-]} \ar[d]_-{\cong} & \HH^{n-1}(A,M) \ar[d]^-{\cong}\\
\Ext^n_{A^\ev}(A,M) \times Z_{M}(\Mod(A^\ev)) \ar[r]^-{\langle -, -\rangle} & \Ext^{n-1}_{A^\ev}(A,M)
}
$$
\end{thm}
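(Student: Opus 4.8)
The plan is to split the argument by the two relevant values of $n$. For $n=0$ both horizontal maps have target $\HH^{-1}(A,M)=0=\Ext^{-1}_{A^\ev}(A,M)$, so the square commutes vacuously and there is nothing to check; all the content sits in the case $n=1$, to which I would devote the rest of the proof. Throughout I would use that, in low degrees, the comparison map $\chi^\ast_M$ supplies the vertical isomorphisms $\HH^1(A,M)\cong\Ext^1_{A^\ev}(A,M)$ and $\HH^0(A,M)\cong\Ext^0_{A^\ev}(A,M)$ with no projectivity hypothesis, which is precisely why the statement is unconditional for $n=0,1$.

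First I would make the top route explicit. An element $\alpha\in\HH^1(A,M)=\Out_K(A,M)$ is represented by a $K$-linear derivation $D\colon A\to M$, and for $z\in Z_M(A)$ the formula of \ref{nn:Gbracket} collapses in degree $1$ to $f\bullet z=f(z)$, so $[\alpha,z]=D(z)$. Using that $z$ is central in $A$ and acts centrally on $M$ (i.e.\ $z\in Z_M(A)$), a short computation with the Leibniz rule gives $aD(z)=D(z)a$, so $D(z)\in M^A=\HH^0(A,M)$; under the vertical isomorphism $\HH^0(A,M)\cong\Hom_{A^\ev}(A,M)$ it corresponds to the $A^\ev$-linear map $a\mapsto aD(z)=D(z)a$. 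This is the element I must recover from the bottom route.

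Next I would unwind the bottom map. By (\ref{eq:surj}) the class $\chi^1_M(\alpha)$ is represented by the pushout extension $S\colon 0\to M\xrightarrow{i}E\xrightarrow{p}A\to 0$ of $\overline D$ along $\iota$ from the fundamental sequence (\ref{eq:fundexact}), and I fix the lift $e=[(0,1\otimes 1)]\in E$ with $p(e)=1$, where $E=\Coker(\overline D\oplus(-\iota))$. The explicit formulas for $\Omega_A(S,f)$ recorded above give the two parallel morphisms $F_\lambda,F_\varrho\colon S\#f\to f\#S$ induced by $\zeta''_\lambda,\zeta''_\varrho$ (with $f=\gamma(z)$, $f(1)=z$), and by definition $\langle\chi^1_M(\alpha),f\rangle=u_\C^{-1}\Omega_A(S,f)$, where $u_\C$ (equivalently Schwede's $\mu$) is the isomorphism of Theorem \ref{thm:retakh}. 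To evaluate $u_\C^{-1}$ I would invoke Lemma \ref{lem:preimage_mu} with base point $S\#f$: choosing the lift $\Phi\colon\mathbb PA\to(S\#f)^\natural$ of $\id_A$ with $P_0=A\otimes A$, $\pi_0=\mu$ and $\Phi_0(a\otimes b)=[(0,[(0,a\otimes b)])]$, the induced lifts $F_\lambda\Phi$ and $F_\varrho\Phi$ agree in all degrees except degree $0$, where their difference is $(0,\,ze_{a\otimes b}-e_{a\otimes b}z)$; this lies in $i(M)$ because $z$ is central in $A$, and hence yields the cocycle $s_0(a\otimes b)=i^{-1}(ze_{a\otimes b}-e_{a\otimes b}z)$ with $e_{a\otimes b}=[(0,a\otimes b)]$.

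The heart of the matter, and the step I expect to demand the most care, is the evaluation of this cocycle. One finds $ze_{a\otimes b}-e_{a\otimes b}z=[(0,\,za\otimes b-a\otimes bz)]$ with $za\otimes b-a\otimes bz\in\Omega^1_A$ (its image under $\mu$ is $zab-abz=0$), so that $s_0(a\otimes b)=\overline D(za\otimes b-a\otimes bz)$. Writing this element via Lemma \ref{lem:omegagen} as $(\mathrm d(za))b-(\mathrm d a)(bz)$ and using the $A^\ev$-linearity of $\overline D$ together with $\overline D\circ\mathrm d=D$ gives $s_0(a\otimes b)=D(za)b-D(a)bz$; expanding $D(za)=D(z)a+zD(a)$ and appealing once more to $z\in Z_M(A)$ (so that $zD(a)=D(a)z$ in $M$ and $zb=bz$ in $A$) cancels the middle terms and leaves $s_0(a\otimes b)=D(z)ab$. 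Thus $s_0$ factors as the action of $D(z)$ followed by $\pi_0=\mu$, so it is indeed a cocycle and its induced class $\bar s_0\colon A\to M$ is $a\mapsto D(z)a=aD(z)$, which is exactly the image of $[\alpha,z]$ computed above; this establishes commutativity. The only delicate points are the bookkeeping of the left and right $A^\ev$-actions on the pushout $E$ and the two separate uses of $z\in Z_M(A)$ — once to place $D(z)$ in $M^A$, and once to force the cancellation in the null-homotopy — while everything else is formal.
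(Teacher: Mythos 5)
Your proposal is correct and follows essentially the same route as the paper's proof: dispose of $n=0$ trivially, represent the $\Ext^1$-class by the pushout of $\overline{D}$ along $\iota$, and apply Lemma \ref{lem:preimage_mu} to identify $u_\C^{-1}\Omega_A(S,f)$ with the degree-zero null-homotopy $s_0$, which is then shown to be (multiplication by) $D(z)$. The only difference is cosmetic: you compute $s_0$ explicitly in the cokernel model of $E$ via Lemma \ref{lem:omegagen}, whereas the paper works with an arbitrary lifting $\Phi$ and the element $\varepsilon=\varphi_0(1\otimes 1)$ — and your careful tracking of signs is in fact cleaner than the paper's final line, which contains a harmless sign slip.
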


\begin{proof}
The statement is a triviality for $n = 0$, so we may assume that $n = 1$. Let 
$$
\xymatrix@C=18pt{
S & \equiv & 0 \ar[r] & M \ar[r]^-i & E \ar[r]^-p & A \ar[r] & 0
}
$$
be an exact sequence, $f \in Z_M(\Mod(A^\ev)) \subseteq \Hom_{A^\ev}(A,A)$ be a homomorphism, and let $z = f(1)$ be the element in $Z_M(A)$ that corresponds to $f$. Further, let $\Delta(f) = F_\lambda - F_\varrho$ be the difference of the morphisms defining the loop $\Omega_{A}(S,f)$. In light of Lemma \ref{lem:preimage_mu}, we have to show the following: For a given lifting $\Phi: \mathbb PA \rightarrow (S\#f)^\natural$ of $\id_A$, there is a null-homotopy $s_i$ (for $i \geqslant 0$) for the map $\Delta(f) \circ \Phi$ with $s_0(1 \otimes 1) = [D_S, z]_M = D_S(z)$, where $D_S$ denotes the (equivalence class of a) derivation in
$$
\xymatrix@C=18pt{
\Der_K(A,M) \ar@{->>}[r] & \displaystyle \frac{\Der_K(A,M)}{\Inn_K(A,M)} = \HH^1(A,M)
}
$$
that corresponds to $S$. Chose a lifting $\Phi: \mathbb PA \rightarrow (S \# f)^\natural$,
$$
\xymatrix{
\mathbb PA \ar[d] & \cdots \ar[r] & P_1 \ar[r]^-{\pi_1} \ar[d]_-{\varphi_1} & A \otimes A \ar[d]^-{\varphi_0} \ar[r]^-{\mu} & A \ar[r] \ar@{=}[d] & 0\,\,\\
S \# f & 0 \ar[r] & M \ar[r] & M \oplus_M E \ar[r] & A \ar[r] & 0 \, ,
}
$$
of the identity map of $A$. Evidently, $\varepsilon = \varphi_0(1 \otimes 1)$ is being mapped to $1$ by $p$, and $\varphi_0 = \mu_\varepsilon$. Now, the homomorphism $\Delta(f)$ of complexes is non-trivial in a single degree, namely in degree $0$, and the assignment $(m,e) \mapsto (0, f(1)e - ef(1))$ defines a map $M \oplus E \rightarrow A \oplus E$ which makes the diagram
$$
\xymatrix@C=30pt{
M \oplus_M E \ar[r]^-{\Delta(f)_0} & M \times_A A \ar@{ >->}[d]^-{\mathrm{can}}\\
M \oplus E \ar[r] \ar[r]^-{} \ar@{->>}[u]^-{\mathrm{can}} & A \oplus E
}
$$
commutative. We arrive at the following commutative diagram.
$$
\xymatrix{
\mathbb PA \ar[d]_-\Phi & \cdots \ar[r] & P_1 \ar[r]^-{\pi_1} \ar[d]_-{\varphi_1} & A \otimes A \ar[d]^-{\varphi_0} \ar[r]^-{\mu} & A \ar[r] \ar@{=}[d] & 0\\
S \# f \ar[d]_-{\Delta(f)} & 0 \ar[r] & M \ar[r] \ar[d]_-{0} & M \oplus_M E \ar[r] \ar[d]^-{\Delta(f)_0} & A \ar[r] \ar[d]^-{0} & 0\\
f \# S & 0 \ar[r] & M \ar[r]^-{j = \left[\begin{smallmatrix}
0\\
i
\end{smallmatrix}\right]
} & A \times_A E \ar[r] & A \ar[r] & 0
}
$$
Since the map $A \times_A E \rightarrow A$ sends $(0, f(1)\varepsilon - \varepsilon f(1))$ to zero, there is a unique preimage $m_\varepsilon$ of $(0, f(1)\varepsilon - \varepsilon f(1))$ under $j$. Indeed, $m_\varepsilon$ is given by $i^{-1}(f(1)\varepsilon - \varepsilon f(1))$. Now let
$$
s_0 = \mu_{m_\varepsilon} : A \otimes A \longrightarrow M, \ s_0(a \otimes b) = a m_\varepsilon b .
$$
Obviously, $s_0(\Omega_A^1) = 0$ so that $s_0 \circ \pi_1 = 0$. On the other hand,
\begin{align*}
(\Delta(f)_0 \circ \varphi_0)(a \otimes b) &= a(0, f(1)\varepsilon - \varepsilon f(1))b = a j(m_\varepsilon) b = (j \circ s_0)(a \otimes b)
\end{align*}
for all $a, b \in A$. Therefore $s_0$ defines a null-homotopy. Finally, under the isomorphism
$$
\Hom_{A^\ev}(A \otimes A, M) \longrightarrow \Hom_K(K, M), \ \varphi \mapsto \varphi(1 \otimes 1),
$$
$s_0$ is being mapped to $s_0(1 \otimes 1) = m_\varepsilon = j^{-1}(0,\varepsilon f(1) - f(1)\varepsilon) = i^{-1}(\varepsilon f(1) - f(1)\varepsilon)$ which precisely is the evaluation at $z = f(1)$ of the derivation $D_\varepsilon : A \rightarrow M$ defined by $\varepsilon$. To finish, recall that $D_\varepsilon$ is a representative of the equivalence class $D_S$ in $\Out_K(A,M)$.
\end{proof}

When combined with Lemma \ref{lem:EnC}, the theorem immediately yields the following.

\begin{cor}\label{cor:Outdervanish}
The map
$$
[-,-] : \frac{\Der_K(A,M)}{\Inn_K(A,M)} \times Z_M(A) \longrightarrow Z_M(A), \ (D,z) \mapsto D(z),
$$
is trivial if, and only if, for each extension $0 \rightarrow M \rightarrow E \rightarrow A \rightarrow 0$ of bimodules, one has $Z_M(A) \subseteq Z_E(A)$. In particular, $\{-,-\} : \Out_K(A, A) \times Z(A) \rightarrow Z(A)$ is trivial if, and only if, $Z_E(A) = Z(A)$ for every extension $0 \rightarrow A \rightarrow E \rightarrow A \rightarrow 0$.\qed
\end{cor}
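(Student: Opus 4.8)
The plan is to deduce this purely formally by feeding the $n=1$ instance of Theorem \ref{thm:gersteqmonoidal} into Lemma \ref{lem:EnC}, applied to the exact tensor category $\C = \Mod(A^\ev)$ with unit $\bbm1 = A$ and with $X = M$. No fresh computation is needed; the whole task is to match up the two pictures and keep track of the relevant implications. Note that Theorem \ref{thm:gersteqmonoidal} is available in degree $1$ with no projectivity hypothesis on $A$, so the corollary will hold for arbitrary $A$.

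First I would record what Theorem \ref{thm:gersteqmonoidal} says in degree $n = 1$: the vertical isomorphisms identify $\HH^1(A,M) = \Der_K(A,M)/\Inn_K(A,M)$ with $\Ext^1_{A^\ev}(A,M)$ and $\HH^0(A,M)$ with $\Ext^0_{A^\ev}(A,M) = \Hom_{A^\ev}(A,M)$, and they carry the bracket $(D,z) \mapsto D(z)$ over to the $1$-st component of $\langle -,-\rangle_M$. Consequently the map $[-,-]$ in the statement vanishes identically if and only if the $1$-st component map of $\langle -,-\rangle_M$ is constantly zero, that is, if and only if condition (3) of Lemma \ref{lem:EnC} holds for $X = M$ and $n = 1$.

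Next I would invoke Lemma \ref{lem:EnC} itself: for $n = 1$ its conditions (2) and (3) are equivalent, the lemma supplying $(2)\Rightarrow(3)$ in general and $(3)\Rightarrow(2)$ precisely when $n = 1$. Thus the bracket is trivial exactly when $Z_M(\Mod(A^\ev)) \subseteq Z_E(\Mod(A^\ev))$ for every $E \in \E^1_{\Mod(A^\ev)}(A,M)$. It then remains to unwind two identifications. By the isomorphism $\gamma\colon Z_N(A) \xrightarrow{\ \sim\ } Z_N(\Mod(A^\ev))$, valid for any bimodule $N$, this inclusion is the same as $Z_M(A) \subseteq Z_E(A)$; and by the definition of $\E^1$, the objects $E$ ranging over $\E^1_{\Mod(A^\ev)}(A,M)$ are exactly the middle terms of short exact sequences $0 \to M \to E \to A \to 0$ of bimodules. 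This yields the first assertion.

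Finally, for the ``in particular'' part I would specialise to $M = A$, where $Z_A(A) = Z(A)$, $\HH^1(A,A) = \Out_K(A,A)$, and $[-,-]$ becomes the degree-$(1,0)$ Gerstenhaber bracket $\{-,-\}\colon \Out_K(A,A) \times Z(A) \to Z(A)$. The condition from the first part reads $Z(A) \subseteq Z_E(A)$ for all $E$ with $0 \to A \to E \to A \to 0$; since $Z_E(A)$ is always a sub-$K$-algebra of $Z(A)$, the inclusion is equivalent to the equality $Z_E(A) = Z(A)$. I expect no serious obstacle: the only points demanding care are extracting the correct direction of the $n=1$ equivalence in Lemma \ref{lem:EnC} (both implications are needed, and one of them is special to $n = 1$) and checking that the relative-center identification $\gamma$ is applied not just to $M$ but to every extension object $E$.
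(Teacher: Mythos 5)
Your proposal is correct and follows exactly the paper's route: the corollary is obtained by combining the $n=1$ case of Theorem \ref{thm:gersteqmonoidal} with the equivalence of conditions (2) and (3) in Lemma \ref{lem:EnC} for $n=1$, then unwinding the identifications $Z_N(A)\cong Z_N(\Mod(A^\ev))$ and $\E^1_{\Mod(A^\ev)}(A,M)=\{\text{middle terms of extensions}\}$. Your observations that no projectivity hypothesis is needed and that $Z_E(A)\subseteq Z(A)$ turns the inclusion into an equality in the ``in particular'' part are both accurate.
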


\begin{rem}\label{rem:kerder}
By having a closer look at the results, and their proofs, leading to Corollary \ref{cor:Outdervanish}, we can improve its statement slightly, in the following sense: Let $D: A \rightarrow M$ be a derivation and let $z \in Z_M(A)$ be an element in the $M$-relative center of $A$. Then $D(z) = 0$ if, and only if, for the short exact sequence $0 \rightarrow M \rightarrow E_D \rightarrow A \rightarrow 0$ that corresponds to the equivalence class of $D$ under the isomorphism $\HH^1(A,M) = \Out_K(A,M) \xrightarrow{\sim} \Ext^1_{A^{\ev}}(A,M)$, one has $z \in Z_{E_D}(A)$.
\end{rem}

\begin{thm}\label{thm:gersteqmonoidalproj}
Let $M$ be an $A^\ev$-module. If $A$ is projective as a $K$-module, then the following diagram commutes for $n > 1$.
$$
\xymatrix{
\HH^n(A,M) \times Z_M(A) \ar[r]^-{[-,-]} \ar[d]_-{\cong} & \HH^{n-1}(A,M) \ar[d]^-{\cong}\\
\Ext^n_{A^\ev}(A,M) \times Z_{M}(\Mod(A^\ev)) \ar[r]^-{\langle -, -\rangle} & \Ext^{n-1}_{A^\ev}(A,M)
}
$$
\end{thm}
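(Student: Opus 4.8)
The plan is to imitate the proof of Theorem~\ref{thm:gersteqmonoidal}, now exploiting that the hypothesis on $A$ makes the bar resolution $\mathbb B A$ a projective $A^\ev$-resolution of $A$. Consequently $\chi_M^\ast$ is an isomorphism, the lower horizontal map may be computed through Schwede's description of $\mu$ by means of $\mathbb B A$, and every class in $\Ext^n_{A^\ev}(A,M)$ is represented by a pushout extension $S = S(\varphi)$ built from a cocycle $\varphi \in \Hom_{A^\ev}(A^{\otimes(n+2)}, M)$ representing a given $\alpha \in \HH^n(A,M)$. By $K$-bilinearity it suffices to chase such a pair $(\alpha, z)$ around the square, where $z = f(1) \in Z_M(A)$. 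First I would fix the natural lift $\Phi : \mathbb B A \to (S\#f)^\natural$ of $\id_A$ (the identity in degrees $\leqslant n-2$, the canonical map onto the pushout term $Q$ in degree $n-1$, and $z\varphi$ on top) and, invoking Lemma~\ref{lem:preimage_mu}, reduce the problem to exhibiting a null-homotopy $(s_i)$ of $\Delta(f)\circ\Phi = (F_\lambda - F_\varrho)\circ\Phi$ and identifying the resulting cocycle $s_{n-1} : A^{\otimes(n+1)} \to M$ with $\varphi \bullet z$.

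The second step is to evaluate $\Delta(f)\circ\Phi$. Using the explicit module-level formulas for $F_\lambda$ and $F_\varrho$ from the previous subsection (the maps $\zeta'_\lambda, \zeta'_\varrho, \tilde\zeta'_\lambda, \tilde\zeta'_\varrho$ and $f^{E_i}_\lambda, f^{E_i}_\varrho$), one checks that in every degree $0 \leqslant i \leqslant n-1$ the map $(\Delta(f)\circ\Phi)_i$ is, up to the canonical maps into the terms of $f\#S$, the commutator $y \mapsto zy - yz$ on $A^{\otimes(i+2)}$, i.e. the difference of left and right multiplication by $z$; it vanishes in degree $n$ because the two top morphisms both equal multiplication by $z$ on $M$. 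The key device is then the bar homotopy $H_m : A^{\otimes(m+2)} \to A^{\otimes(m+3)}$, $H_m = \sum_{k=1}^{m+1}(-1)^{k-1}(\text{insert } z \text{ in the }k\text{-th internal slot})$, which is $A^\ev$-linear and, since $z \in Z(A)$, satisfies $\beta_{m+1}H_m + H_{m-1}\beta_m = z\cdot(-) - (-)\cdot z$. Transporting $H$ into $(f\#S)^\natural$ through $\Phi$ and the structural maps of the pushout and pullback terms yields the homotopy: concretely $s_i = H_i$ for $i \leqslant n-3$, $s_{n-2} = \mathrm{can}\circ H_{n-2}$ into the pushout term, and $s_{n-1} = \varphi\circ H_{n-1}$.

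It remains to verify the two boundary relations and to read off the answer. At the top, the homotopy relation in degree $n-1$ holds because $\mathrm{can}\circ\beta_n = d_n\circ\varphi$ is precisely the defining relation of the pushout term $Q$ of $S(\varphi)$; note that the factor $z$ carried by $\Phi_n = z\varphi$ drops out here, so no spurious $z$ survives in $s_{n-1}$. The main obstacle, and the place where membership $z \in Z_M(A)$ rather than merely $z \in Z(A)$ is indispensable, is the cocycle condition $s_{n-1}\circ\beta_n = 0$ required by Lemma~\ref{lem:preimage_mu}: applying the homotopy identity on $A^{\otimes(n+2)}$ together with $\varphi\circ\beta_{n+1} = 0$ reduces $s_{n-1}\circ\beta_n = \varphi\circ H_{n-1}\circ\beta_n$ to the map $x \mapsto z\varphi(x) - \varphi(x)z$, which vanishes exactly because $z$ commutes with every element of $M$. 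Finally, transporting $s_{n-1} = \varphi\circ H_{n-1}$ through the adjunction $\Hom_{A^\ev}(A^{\otimes(n+1)},M) \cong \Hom_K(A^{\otimes(n-1)},M)$ produces $\sum_{i=1}^n (-1)^{i-1}\varphi\bullet_i z = \varphi\bullet z = [\alpha, z]$, which by Lemma~\ref{lem:preimage_mu} is the preimage of the loop $\Omega_A(S,f)$ under $\mu$; this is exactly the asserted commutativity of the square.
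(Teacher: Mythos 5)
Your proposal is correct and follows essentially the same route as the paper: both reduce via Lemma~\ref{lem:preimage_mu} to exhibiting a null-homotopy of $\Delta(f)\circ\Phi$, both take $s_i$ to be the alternating-sign ``insert $z$'' operator composed with the lift (your bar-homotopy identity $\beta_{m+1}H_m + H_{m-1}\beta_m = z\cdot(-)-(-)\cdot z$ is exactly the specialisation of the fundamental formula~(\ref{eq:fundform}) that the paper invokes), and both read off $\psi\bullet z$ from $s_{n-1}$, with the cocycle condition resting on $z\in Z_M(A)$. The only difference is cosmetic: you specialise to the pushout extension $S(\varphi)$ with its canonical lift, whereas the paper works with an arbitrary representative $S$ and an arbitrary lift $\Phi$, which costs nothing since $\langle-,-\rangle$ is defined on equivalence classes.
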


\begin{proof}
Since $A$ is $K$-projective by assumption, the bar resolution $\mathbb B A$ is a projective resolution of $A$ over $A^\ev$. We will therefore assume that $\mathbb P A = \mathbb B A$. Fix an extension $S \in \mathcal Ext^n_{A^\ev}(A,M)$ with corresponding equivalence class $\alpha = [S]$ in $\Ext^n_{A^\ev}(A,M)$, and a map $f \in Z_M(\Mod(A^\ev)) \subseteq \Hom_{A^\ev}(A,A)$. Let $S$ be given as
$$
\xymatrix@C=18pt{
S & \equiv & 0 \ar[r] & M \ar[r]^-{d_{n}} & E_{n-1} \ar[r]^-{d_{n-1}} & \cdots \ar[r]^-{d_2} & E_1 \ar[r]^-{d_1} & E_0 \ar[r]^-{d_0} & A \ar[r] & 0
}
$$
and let $z = f(1)$ be the element in $Z_M(A)$ determined by $f$. Chose a lifting $\Phi: \mathbb B A \rightarrow S^\natural$ of the identity map of $A$, and put
$$
\psi_{i} = \varphi_i(1 \otimes - \otimes \cdots \otimes - \otimes 1) \quad \in \quad \Hom_K(A^{\otimes i}, M)
$$
for $i \geqslant 0$. Recall the classical result (cf. \cite[Sec.\,IV.9]{HiSt97}), that the equivalence class of the $n$-cocycle $\psi_n = \varphi_n(1 \otimes - \otimes \cdots \otimes - \otimes 1)$ in $\HH^n(A,M)$ will be mapped to $\alpha$ under the isomorphism $\HH^n(A,M) \xrightarrow{\sim} \Ext^n_{A^\ev}(A,M)$. The homomorphism $\Phi$ of complexes yields a lifting $\tilde{\Phi}: \mathbb BA \rightarrow (S\#f)^\natural$ of the identity map of $A$:
$$
\xymatrix@C=16pt{
\cdots \ar[r]^-{\beta_{n+1}} & A^{\otimes(n+2)} \ar[d]_-{\varphi_n} \ar[r]^-{\beta_{n}} & A^{\otimes(n+1)} \ar[r]^-{\beta_{n-1}} \ar[d]_-{\varphi_{n-1}} & \cdots \ar[r]^-{\beta_2} & A \otimes A \otimes A \ar[r]^-{\beta_1} \ar[d]^-{\varphi_1} & A \otimes A \ar[r]^-{\beta_0} \ar[d]^-{\varphi_0} & A \ar[r] \ar@{=}[d] & 0\\
0 \ar[r] & M \ar[d]_-{f_\lambda^M}^-{f_\varrho^M}|= \ar[r] & E_{n-1} \ar[d]_-{\mathrm{can}} \ar[r] & \cdots \ar[r] & E_1 \ar[r] \ar@{=}[d] & E_0 \ar[r] \ar@{=}[d] & A \ar[r] \ar@{=}[d] & 0\\
0 \ar[r] & M \ar[r]^-{\mathrm{can}} & M \oplus_M E_{n-1} \ar[r] & \cdots \ar[r] & E_1 \ar[r] & E_0 \ar[r] & A \ar[r] & 0
}
$$
Let $F_\lambda$ and $F_\varrho$ be the morphisms of extensions defining the loop $\Omega_A(S,f)$, and $\Delta(f) = F_\lambda - F_\varrho$ be their difference. The task will be, as in the proof of the preceding theorem, to find a null-homotopy $s_i: A^{\otimes(i+2)} \rightarrow E_{i+1}$ (for $i \geqslant 0$) for $\Delta(f) \circ \tilde{\Phi}$ such that the image of $s_{n-1}$ under the isomorphism
$$
\Hom_{A^\ev}(A^{\otimes(n+1)}, M) \longrightarrow \Hom_K(A^{\otimes(n-1)},M), \ \varphi \mapsto \varphi(1 \otimes - \otimes \cdots \otimes - \otimes 1).
$$
represents the element $[\alpha, z] \in \HH^{n-1}(A, M)$. We claim that the maps
\begin{align*}
s_i(a_0 \otimes \cdots \otimes a_{i+1}) &= a_0 \big( (\psi_{i+1} \bullet z)(a_1 \otimes \cdots \otimes a_{i})\big) a_{i+1}\\
&= \sum_{k=1}^{i + 1}(-1)^{k-1} \varphi_{i+1}(a_0 \otimes \dots \otimes a_{k-1} \otimes z \otimes a_{k} \otimes \cdots \otimes a_{i+1})
\end{align*}
define the null-homotopy that we are seeking for. In fact, it is already apparent from the definition, that
$$
a_1 \otimes \cdots \otimes a_{n-1} \mapsto s_{n-1}(1 \otimes a_1 \otimes \cdots \otimes a_{n-1} \otimes 1) = (\psi_n \bullet z) (a_1 \otimes \cdots \otimes a_{n-1})
$$
is a map that represents $[\alpha, z] \in \HH^{n-1}(A,M)$.

As a first observation, the composition $\Delta(f) \circ \tilde{\Phi}$ is zero in degrees $n - 1$ and $n$. Its non-trivial component maps are
\begin{align*}
(\Delta(f) \circ \tilde{\Phi})_0(a \otimes b) &= (0,z \varphi_0(a \otimes b) - \varphi_0(a \otimes b) z) && \text{(for $a, b \in A$)},\\
(\Delta(f) \circ \tilde{\Phi})_i &= (f_\lambda^{E_{i}} - f_\varrho^{E_i}) \circ \varphi_i && \text{(for $i \neq 0, n-1, n$)}.
\end{align*}
The map $(\Delta(f) \circ \tilde{\Phi})_0$ coincides with $(E_{1} \rightarrow A \times_A E_0) \circ s_0$, for the latter sends $a \otimes b \in A \otimes A$ to $(0, d_1(s_0(a \otimes b))$ and
\begin{align*}
(d_1 \circ s_0)(a \otimes b) &= (d_1 \circ \varphi_1)(a \otimes z \otimes b)\\
&= (\varphi_0 \circ \beta_1)(a \otimes z \otimes b)\\
&= \varphi_0(az \otimes b) - \varphi(a \otimes zb)\\
&= z\varphi_0(a \otimes b) - \varphi(a \otimes b)z .
\end{align*}
Finally, if $\mathbb C(A,M) = (C^\ast(A,M), \partial_M)$ denotes the Hochschild cocomplex with coefficients in $M$, we get
$$
\partial_M(\psi_{i+1} \bullet z) + \partial_M(\psi_{i+1}) \bullet z = \psi_{i+1} \bullet \partial_A(z) + z\psi_{i+1} - \psi_{i+1}z \, ,
$$
by the fundamental formula (\ref{eq:fundform}), which yields
\begin{align*}
s_i \circ \beta_{i+1} + d_{i+2} \circ s_{i+1} &= z\varphi_{i+1} - \varphi_{i+1} z\\ &= \Delta(f)_{i+1} \circ \varphi_{i+1}.
\end{align*}
In fact, one easily verfies that
\begin{align*}
(s_i \circ \beta_{i+1})(a_0 \otimes \cdots \otimes a_{i+2}) &= a_0 \big(\partial_M(\psi_{i+1} \bullet z)(a_1 \otimes \cdots \otimes a_{i+1})\big) a_{i+2}
\intertext{and}
(d_{i+2} \circ s_{i+1})(a_0 \otimes \cdots \otimes a_{i+2}) &= a_0 \big( (\partial_M (\psi_{i+1}) \bullet z)(a_1 \otimes \cdots \otimes a_{i+1})\big)a_{i+2}
\end{align*}
for all $i = 0, \dots, n$ and all $a_0, \dots, a_{i+2} \in A$.
\end{proof}

From Example \ref{exa:monoidalfunc} and Proposition \ref{prop:functcomp} we immediately deduce the following.

\begin{cor}\label{cor:mapMoritainv}
Let $A$ be projective as a $K$-module. Then, if $B$ is a $K$-algebra being Morita equivalent to $A$, with corresponding progenerator $P$ for $A$, there is an isomorphism $\HH^\ast(A,M) \cong \HH^\ast(B, N)$ of right Gerstenhaber modules over $Z_M(A) \cong Z_{N}(B)$. Here $N$ denotes the $B^\ev$-module $\Hom_{A^\ev}(P \otimes \Hom_A(P,A), M)$.\qed
\end{cor}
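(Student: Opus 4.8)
The plan is to realise the claimed comparison as the transport of structure along the Morita equivalence, and then to recognise it as an isomorphism of right Gerstenhaber modules by combining the functoriality of Proposition~\ref{prop:functcomp} with the identification of $[-,-]$ and $\langle-,-\rangle$ supplied by Theorem~\ref{thm:mainthm}. First I would set $P^\ev = P \otimes \Hom_A(P,A)$ and consider $\mathfrak X = \Hom_{A^\ev}(P^\ev,-) : \Mod(A^\ev) \to \Mod(B^\ev)$. By Example~\ref{exa:monoidalfunc} this is an equivalence of categories and an almost strong monoidal functor between $(\Mod(A^\ev), \otimes_A, A)$ and $(\Mod(B^\ev), \otimes_B, B)$; being an equivalence, it is in particular exact and bijective on all Hom-sets. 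Since $\mathfrak X(M) = \Hom_{A^\ev}(P^\ev, M) = N$, I would apply Proposition~\ref{prop:functcomp} with $\bbm1_\C = A$ and $X = M$.

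This application yields a $K$-algebra isomorphism $\delta_M : Z_M(\Mod(A^\ev)) \xrightarrow{\sim} Z_N(\Mod(B^\ev))$ — bijective because $\mathfrak X$ is full and faithful on $\Hom_{A^\ev}(A,A)$ — together with a graded isomorphism $\mathfrak X^\ast : \Ext^\ast_{A^\ev}(A,M) \xrightarrow{\sim} \Ext^\ast_{B^\ev}(B,N)$ fitting into the commutative square that intertwines $\langle-,-\rangle_M$ with $\langle-,-\rangle_N$. Under the algebra isomorphisms $Z_M(A) \cong Z_M(\Mod(A^\ev))$ and $Z_N(B) \cong Z_N(\Mod(B^\ev))$ recalled before, $\delta_M$ becomes the promised isomorphism $Z_M(A) \cong Z_N(B)$.

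It then remains to pass from $\Ext^\ast$ back to $\HH^\ast$ and to upgrade compatibility with the bracket to compatibility with the full right Gerstenhaber module structure. As $A$ is $K$-projective, the bar resolution $\mathbb B A$ is $A^\ev$-projective, so $\chi^\ast_M$ is an isomorphism $\HH^\ast(A,M) \xrightarrow{\sim} \Ext^\ast_{A^\ev}(A,M)$. I would then check that $B$ is $K$-projective as well: $P$ is a direct summand of some $A^n$, whence $B = \End_A(P)^\op$ is a $K$-direct summand of $\End_A(A^n) \cong M_n(A)$, which is $K$-projective; thus $\chi^\ast_N$ is an isomorphism too. Composing $\mathfrak X^\ast$ with these identifications produces a graded isomorphism $\Theta : \HH^\ast(A,M) \xrightarrow{\sim} \HH^\ast(B,N)$. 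By Theorem~\ref{thm:mainthm} the maps $\chi^\ast_M$ and $\chi^\ast_N$ carry $[-,-]$ to $\langle-,-\rangle$ on both sides, so the square of Proposition~\ref{prop:functcomp} forces $\Theta$ to intertwine $[-,-]_M$ with $[-,-]_N$. Moreover, since $\mathfrak X$ is monoidal it satisfies $\mathfrak X(S \# f) = \mathfrak X(S) \# \delta_M(f)$, so $\mathfrak X^\ast$ — and hence $\Theta$ — also respects the graded module structure over the centres given by the Yoneda (cup) product; together these two facts make $\Theta$ an isomorphism of right Gerstenhaber modules over $Z_M(A) \cong Z_N(B)$.

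The main difficulty I anticipate is bookkeeping rather than conceptual: one must ensure that the several identifications ($\mathfrak X(M) = N$, $Z_M(A) \cong Z_M(\Mod(A^\ev))$, and the two copies of $\chi^\ast$) are mutually compatible, so that the bracket and the module action transport simultaneously through the \emph{single} isomorphism $\Theta$. The one point I would want to state explicitly rather than take for granted is that $\mathfrak X^\ast$ is multiplicative for the Yoneda product, which is exactly what guarantees that the module structure, and not merely the bracket, is preserved; the $K$-projectivity of $B$, needed for $\chi^\ast_N$, is the other small verification to record.
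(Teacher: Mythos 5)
Your argument is correct and follows exactly the route the paper intends: the corollary is deduced directly from Example~\ref{exa:monoidalfunc} and Proposition~\ref{prop:functcomp}, with Theorem~\ref{thm:mainthm} (via the isomorphisms $\chi^\ast$) used to pass between $\langle-,-\rangle$ on $\Ext^\ast$ and $[-,-]$ on $\HH^\ast$. Your two explicit verifications --- that $B$ is $K$-projective (as a $K$-direct summand of $M_n(A)$), so that $\chi^\ast_N$ is an isomorphism, and that $\mathfrak X^\ast$ is multiplicative for the Yoneda product, so the full module structure and not merely the bracket is transported --- are details the paper leaves implicit, and both are correct.
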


\begin{cor}\label{cor:classcomm}
Let $A$ be projective over $K$. Consider the following statements.
\begin{enumerate}[\rm(1)]
\item\label{cor:classcomm:1} The Gerstenhaber bracket $\{-,-\}$ on $\HH^\ast(A)$ introduced in Section $\ref{sec:Mrel}$ is trivial.
\item\label{cor:classcomm:2} The category $(\Mod(A^\ev), \otimes_{A}, A)$ is braided monoidal.
\item\label{cor:classcomm:3} The category $(\Mod(Z(A)^\ev), \otimes_{Z(A)}, Z(A))$ is braided monoidal.
\item\label{cor:classcomm:4} $Z(A) = Z_M(A)$ for all $M \in \Mod(A^\ev)$.
\item\label{cor:classcomm:5} The bracket $[-,-]_M : \HH^\ast(A,M) \times Z_M(A) \rightarrow \HH^{\ast - 1}(A,M)$ vanishes for all $M \in \Mod(A^\ev)$.
\end{enumerate}
Then one has the implications
$$
(\ref{cor:classcomm:1}) \ \Longleftarrow \ (\ref{cor:classcomm:2}) \ \Longrightarrow \ (\ref{cor:classcomm:3}) \ \Longleftrightarrow \ (\ref{cor:classcomm:4}) \ \Longrightarrow \ (\ref{cor:classcomm:5})
$$
amongst them.
\end{cor}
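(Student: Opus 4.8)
The plan is to read every statement inside the tensor $K$-category $\C = (\Mod(A^\ev), \otimes_A, A)$, where the endomorphisms of the unit give $Z(\C) = \Hom_{A^\ev}(A,A) = Z(A)$ and, by the bijection $z \mapsto (a \mapsto za)$ from the examples following the definition of the $X$-relative centre, $Z_M(\C) \cong Z_M(A)$ for every $A^\ev$-module $M$. Under this dictionary statement (\ref{cor:classcomm:4}) reads precisely as ``$Z(\C) = Z_X(\C)$ for all $X \in \C$''. The equivalence (\ref{cor:classcomm:3})$\,\Leftrightarrow\,$(\ref{cor:classcomm:4}) then needs nothing new: it is the equivalence of items (2) and (3) of Lemma \ref{lem:Mrelcent}. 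For (\ref{cor:classcomm:2})$\,\Rightarrow\,$(\ref{cor:classcomm:4}) I would argue straight from a braiding $\gamma$ on $\C$. Since $A = \bbm1_\C$, Remark \ref{rem:opposite}(3) forces $\gamma_{A,M} = \varrho_M^{-1}\circ\lambda_M$ for every $M$. Fixing $f \in \Hom_{A^\ev}(A,A)$, that is, left (equivalently right) multiplication by $z = f(1) \in Z(A)$, naturality of $\gamma$ in its first variable gives $(M\otimes_A f)\circ\gamma_{A,M} = \gamma_{A,M}\circ(f\otimes_A M)$; substituting $\gamma_{A,M} = \varrho_M^{-1}\lambda_M$ and composing with $\varrho_M$ on the left and $\lambda_M^{-1}$ on the right reduces this to $f^M_\varrho = f^M_\lambda$, i.e. $mz = zm$ for all $m \in M$. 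As $M$ and $z$ are arbitrary this is $Z(A) = Z_M(A)$ for all $M$, hence (\ref{cor:classcomm:4}), and therefore also (\ref{cor:classcomm:3}).

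For (\ref{cor:classcomm:4})$\,\Rightarrow\,$(\ref{cor:classcomm:5}) I would feed (\ref{cor:classcomm:4}), in its categorical form $Z(\C) = Z_X(\C)$ for all $X$, into Proposition \ref{prop:Xrelcennat}: its implication (1)$\,\Rightarrow\,$(3) yields $\langle -,-\rangle_X \equiv 0$ for all $X \in \C$ and in every degree. Because $A$ is $K$-projective, the main theorem (Theorems \ref{thm:gersteqmonoidal} and \ref{thm:gersteqmonoidalproj} together) asserts that the square identifying $[-,-]_M$ with $\langle -,-\rangle_M$ along the isomorphisms $\HH^\ast(A,M) \cong \Ext^\ast_{A^\ev}(A,M)$ and $Z_M(A) \cong Z_M(\C)$ commutes in all degrees. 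Transporting the vanishing of $\langle -,-\rangle_M$ across it gives $[-,-]_M \equiv 0$ for every $M$, which is (\ref{cor:classcomm:5}).

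It remains to treat (\ref{cor:classcomm:2})$\,\Rightarrow\,$(\ref{cor:classcomm:1}), which I would handle by splitting the Gerstenhaber bracket $\{-,-\}$ on $\HH^\ast(A)$ according to bidegree. In bidegrees $(m,n)$ with $m,n \geq 1$ the monoidal reformulation of Schwede's bracket from \cite{He14b} presents $\{-,-\}$ by a commutator loop assembled from the two Yoneda composites $S\#T$ and $T\#S$; a braiding on $\C$ renders these two composites coherently isomorphic, so the loop becomes null-homotopic and the bracket vanishes on that range. In bidegree $(n,0)$ the bracket coincides with $[-,-]_A$, which is zero by the chain (\ref{cor:classcomm:2})$\Rightarrow$(\ref{cor:classcomm:4})$\Rightarrow$(\ref{cor:classcomm:5}) specialised to $M = A$; the graded antisymmetry axiom (G2) then forces the bidegree $(0,n)$ part to vanish as well, and the bidegree $(0,0)$ part lands in $\HH^{-1}(A) = 0$. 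Assembling the pieces gives $\{-,-\} \equiv 0$, i.e. (\ref{cor:classcomm:1}).

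The one genuinely non-formal step, and the place I expect the real work to sit, is the positive-bidegree case of (\ref{cor:classcomm:2})$\,\Rightarrow\,$(\ref{cor:classcomm:1}): unlike the remaining implications it cannot be extracted from the present paper and rests on the braided refinement of the generalised Schwede construction in \cite{He14b}. What must be pinned down there is not merely that $S\#T$ and $T\#S$ are isomorphic, but that the coherence hexagons of the braiding make the commutator loop computing $\{S,T\}$ null-homotopic; verifying that this is exactly what \cite{He14b} delivers, and that under the present hypothesis $A$ is $K$-projective so that $\Ext^\ast_{A^\ev}(A,A) \cong \HH^\ast(A)$ as Gerstenhaber algebras, is the crux of the argument.
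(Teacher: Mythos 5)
Your argument is correct and follows essentially the same route as the paper: the equivalence (3)$\Leftrightarrow$(4) via Lemma \ref{lem:Mrelcent}, the implication (2)$\Rightarrow$(4) from $\gamma_{A,M}=\varrho_M^{-1}\circ\lambda_M$ plus naturality (the paper phrases this through Proposition \ref{prop:Xrelcennat}), and (4)$\Rightarrow$(5) via Proposition \ref{prop:Xrelcennat} together with the main theorem. The one step you flag as lying outside the present paper --- that a braiding on $(\Mod(A^\ev),\otimes_A,A)$ kills the bracket in bidegrees $(m,n)$ with $m,n\geqslant 1$ --- is exactly what the paper imports by citing \cite[Cor.\,5.5.8]{He14b}, after which it completes bidegree $(n,0)$ via (2)$\Rightarrow$(3)$\Rightarrow$(5) specialised to $M=A$, just as you do.
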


\begin{proof}
Due to Lemma \ref{lem:Mrelcent} and Proposition \ref{prop:Xrelcennat} the only implications that remain to be shown are $(\ref{cor:classcomm:1}) \, \Longleftarrow \, (\ref{cor:classcomm:2}) \, \Longrightarrow \, (\ref{cor:classcomm:3})$. If $(\Mod(A^\ev), \otimes_{A}, A)$ is braided monoidal, with braiding $\gamma_{M,N}: M \otimes_A N \rightarrow N \otimes_A M$, then the axioms yield that $\gamma_{A,N} = \varrho^{-1}_N \circ \lambda_{N} = \gamma_N$. In particular, $\gamma_N$ will give rise to an isomorphism
$$
( - \otimes_A - ) \longrightarrow (- \otimes_A -) \circ T
$$
of functors $\add_{A^\ev}(A) \times \Mod(A^\ev) \rightarrow \Mod(A^\ev)$. Therefore, by Proposition \ref{prop:Xrelcennat}, the second item implies the forth, hence the third.

Moreover, item (\ref{cor:classcomm:2}) implies, that the Gerstenhaber bracket $\{-,-\}$ vanishes in degrees different from $(n,0)$; see \cite[Cor.\,5.5.8]{He14b}. But since it also implies (\ref{cor:classcomm:3}), and hence (\ref{cor:classcomm:5}), the bracket will vanish in degrees $(n,0)$ as well.
\end{proof}

The implications $(\ref{cor:classcomm:1}) \, \Longrightarrow \, (\ref{cor:classcomm:2})$ and $(\ref{cor:classcomm:1}) \, \Longrightarrow \, (\ref{cor:classcomm:3})$ in the above corollary will, in general, not hold true. Thus the question is:

\begin{quest}
What does it mean for the algebra $A$ and its category of (bi-)modules that the restricted Gerstenhaber bracket,
$$
\{-,-\} : \HH^m(A) \times \HH^n(A) \longrightarrow \HH^{m + n - 1}(A),
$$
vanishes for all, or some, integers $m, n \geqslant 0$?
\end{quest}

\bigskip\bigskip
\noindent\textbf{Acknowledgements.} I thank R.-O.\,Buchweitz, S.\,Oppermann and J.\,Steen for valuable discussions on the topic. The research that led to this paper was supported by the project ``Triangulated categories in algebra'' (Norwegian Research Council project NFR 221893).

\bibliographystyle{abbrv}


\end{document}